\documentclass[12pt]{amsart}
\usepackage{amsmath,amscd,graphics,color,a4wide,verbatim}
\usepackage{tgpagella}
\usepackage{stmaryrd}

\usepackage{amssymb,amsthm,mathrsfs,enumitem}
\usepackage[parfill]{parskip}
\usepackage[dvipsnames]{xcolor}
\usepackage{tikz, tikz-3dplot, pgfplots}
\usepackage{tkz-graph,tikz-cd}
\usetikzlibrary{matrix,arrows,decorations.pathmorphing,positioning,patterns}
\usepackage{subcaption,ytableau,dynkin-diagrams,slashed,textcmds,epic,marvosym}
\usepackage{multicol,fullpage}

\usepackage[colorlinks]{hyperref}

\newtheorem{theorem}{Theorem}[section]
\newtheorem{lemma}[theorem]{Lemma}
\newtheorem{lem-def}[theorem]{Lemma-definition}
\newtheorem{proposition}[theorem]{Proposition}
\newtheorem{prop-def}[theorem]{Proposition-definition}

\newtheorem{corollary}[theorem]{Corollary}

\newtheorem*{setup}{Set-up}

\theoremstyle{definition}
\newtheorem{definition}[theorem]{Definition}

\newtheorem{examples}[theorem]{Examples}

\theoremstyle{remark}
\newtheorem{remark}[theorem]{Remark}
\numberwithin{equation}{section}

\pgfplotsset{compat=1.18}

\newcommand{\CC}{\mathbb{C}}

\newcommand{\GG}{\mathbb{G}}

\newcommand{\LL}{\mathbb{L}}

\newcommand{\PP}{\mathbb{P}}

\newcommand{\TT}{\mathbb{T}}

\newcommand{\shH}{\mathcal{H}}

\newcommand{\shO}{\mathcal{O}}

\newcommand{\codim}{\operatorname{codim}}

\renewcommand{\ker}{\operatorname{Ker}} % Only define once

\newcommand{\Ext}{\operatorname{Ext}}
\newcommand{\sExt}{\mathscr{E}\kern -3pt xt}

\newcommand{\Hom}{\operatorname{Hom}}
\newcommand{\sHom}{\mathscr{H}\kern-5pt\mathcalligra{om}}

\newcommand{\Pic}{\operatorname{Pic}}

\newcommand{\Spec}{\operatorname{Spec}}

\newcommand{\Sym}{\operatorname{Sym}}

\newcommand{\Tot}{\operatorname{Tot}}

\newcommand{\arw}{\longrightarrow}

\newcommand{\Hdot}{\operatorname{H}}

\DeclareRobustCommand{\Sec}{\ifmmode\mathsection\else\textsection\fi}

\title[]{Lagrangian Intersections, Symplectic Reduction and Kirwan Surjectivity}
\author[NC. Leung]{Naichung Conan Leung}
\address{The Institute of Mathematical Sciences and Department of Mathematics\\The Chinese University of Hong Kong\\Shatin, N.T.\\ Hong Kong}
\email{leung@ims.cuhk.edu.hk}

\author[Y. Xie]{Ying Xie}
\address{School of Mathematical Sciences\\
Shenzhen University\\ Shenzhen, 518061, P.R. China}
\email{xieying@szu.edu.cn}

\author[YT. Yau]{Yu Tung Yau}
\address{Kavli Institute for the Physics and Mathematics of the Universe (WPI)\\ The University of Tokyo Institutes for Advanced Study\\ The University of Tokyo\\
Kashiwa, Chiba 277-8583, Japan}
\email{yu-tung.yau@ipmu.jp}

\begin{document}

\begin{abstract}
Given a smooth holomorphic symplectic variety $X$ with a Hamiltonian $G$-action, $G$-invariant Lagrangians $C's$ induce Lagrangians in the symplectic quotient $X\sslash  G$. Given clean intersections $B=C_1\cap C_2$ whose conormal sequence splits, we show that 
$$C_1/G\times_{X\sslash G} C_2/G\cong T^{\vee}[-1](B/G).$$
When $\det(N_{B/C_2})$ is torsion, we have $\Ext^{\bullet}_{X\sslash G}(\shO_{C_1/G}, \shO_{C_2/G})\cong \Hdot^{\bullet}_G(B, \det(N_{B/C_2})_{\delta})$ provided that the Hodge-to-de Rham degeneracy holds. Furthermore, we have a generalized version of Kirwan surjectivity
$\Ext^{\bullet}_{X\sslash G}(\shO_{C_1/G}, \shO_{C_2/G})\twoheadrightarrow \Ext^{\bullet}_{X^{ss}\sslash G}(\shO_{C_1^{ss}/G}, \shO_{C_2^{ss}/G})$ if $B$ is proper. When $C_1=C_2$, this is the Kirwan surjectivity, which is now interpreted as the symmetry commutes with reduction problem in 3d B-model. We also obtain similar results for $K_{C_1/G}^{1/2}$ and $K_{C_2/G}^{1/2}$. 
\end{abstract}

\maketitle
\section{Introduction}
As is central in both symplectic geometry and mirror symmetry, the study of complex Lagrangian intersections is key to constructing the Fukaya category of complex Lagrangians and 3d B-model 2-category proposed by Kapustin-Rozansky-Saulina \cite{kapustin2009three}. In this paper, we investigate an equivariant Lagrangian intersection. 
\begin{setup}[S]\label{setup}
$(X,\omega)$ is a smooth symplectic quasi-projective variety with a Hamiltonian $G$-action, where $G$ is a complex linear reductive group, and $C_1, C_2$ are $G$-invariant smooth connected Lagrangians of $(X, \omega)$ such that the scheme theoretic intersection $B:=C_1\cap C_2$ is non-empty and smooth.
\end{setup}
We call conormal sequence of $G$-invariant $B\subset X$ \emph{splits} if the following conormal bundle sequence splits as $G$-equivariant bundles
\begin{align*}
    0\arw N^{\vee}_{B/X}\arw \Omega_X|_B \arw \Omega_B\arw 0. 
 \end{align*}
For example, given any smooth quasi-projective variety $M$ with a group action $G$, the cotangent bundle $X=\Tot(\Omega_M)$ is a smooth symplectic quasi-projective variety with a Hamiltonian $G$-action. The zero section $M\subset X$ is always a $G$-invariant Lagrangian of $X$, and the conormal sequence of $M\subset X$ splits canonically.

In Set-up (S), there is a complex moment map $\mu: X\rightarrow \mathfrak{g}^{\vee}$, and $G$-invariant Lagrangians $C_1$ and $C_2$ share the same moment value $c$ (see Lemma \ref{lem: invLag}). The symplectic quotient $X\sslash_c G$ of level $c$ ($X\sslash G$ for short) is a 0-shifted symplectic Artin stack, and the quotient stacks $C_1/G$ and $C_2/G$ are (0-shifted) Lagrangians of $X\sslash G$.
Let $\LL_{B/G}$ denote the cotangent complex of the quotient stack $B/G$, and denote the $(-1)$-shifted cotangent stack of $B/G$ as $\Tot_{B/G}(\LL_{B/G}[-1]):=\Spec_{B/G}(\Sym^{\bullet} \LL_{B/G}^{\vee}[1])$. If $\mathfrak{L}\in \Pic(B)$ is a torsion line bundle on $B$, \emph{i.e.,} $\mathfrak{L}^{\otimes n}\cong \shO_B$ for some integer $n$, then $\mathfrak{L}$ determines a rank 1 $\CC$-local system in the usual analytic topology on $B$, denoted as $\mathfrak{L}_{\delta}$. Similarly for equivariant cases. Our primary result establishes the geometry of Lagrangian intersections in $X\sslash  G$.  

\begin{theorem}[=Theorem \ref{mainthm}+Corollary \ref{symredext2}]
In Set-up (S), if the conormal sequence of $B\subset X$ splits, then the derived fiber product 
    $$C_1/G\times_{X\sslash  G} C_2/G\cong \Tot_{B/G}(\LL_{B/G}[-1])$$
If $\det(N_{B/C_2})$ is torsion in $\Pic(B/G)$, $B$ is proper-over-affine and $h^0(B, \shO)^{G}<\infty$, then 
    $$\Ext^{\bullet}_{X\sslash G}(\shO_{C_1/G}, \shO_{C_2/G})\cong \Hdot^{\bullet-\codim(B, C_2)}_G(B, \det(N_{B/C_2})_{\delta}).$$
   In particular, if $C_1=C_2=C$ is smooth and proper, then $$\Ext^{\bullet}_{X\sslash G}(\shO_{C/G}, \shO_{C/G})\cong \Hdot^{\bullet}_G(C, \CC).$$
   \end{theorem}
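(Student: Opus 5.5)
The plan is to compute the derived fiber product first and then read off the Ext groups as global functions on it.

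\textbf{Step 1: reduce to the shifted conormal of $B$ in $X$.} Write $X\sslash G=\mu^{-1}(c)/G$, where $\mu^{-1}(c)$ is taken as a derived zero locus. By Lemma \ref{lem: invLag}, $C_i\subset\mu^{-1}(c)$. Base change along the smooth atlas $\mu^{-1}(c)\to X\sslash G$ identifies the fiber product with
\[
(C_1\times_{\mu^{-1}(c)} C_2)/G .
\]
Next I compare this with $C_1\times_X C_2$. Since $\mu^{-1}(c)=X\times_{\mathfrak g^\vee}\{c\}$, we get
\[
C_1\times_{\mu^{-1}(c)}C_2 \cong (C_1\times_X C_2)\times_{\{c\}\times_{\mathfrak g^\vee}\{c\}}\{c\} ,
\]
and $\{c\}\times_{\mathfrak g^\vee}\{c\}=\Spec\Sym(\mathfrak g[1])$. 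So the effect is to remove a copy of $\mathfrak g^\vee[-1]$ from the cotangent directions. For a clean intersection, the derived intersection $C_1\times_X C_2$ is $\Tot_B(E[-1])$ with $E=\mathrm{coker}(T_{C_1}|_B\oplus T_{C_2}|_B\to T_X|_B)$ (excess bundle). The symplectic form identifies $E\cong T_B^\vee=\Omega_B$. The splitting hypothesis on the conormal sequence is what makes the derived scheme formal, i.e.\ equivalent to the linear model $\Tot_B(\Omega_B[-1])$ $G$-equivariantly, with no twisting by a higher obstruction class. I would prove this by choosing a $G$-equivariant tubular/linearization model, using the splitting to build an equivariant retraction.

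\textbf{Step 2: glue in the group directions.} Killing $\mathfrak g[1]$ turns $\Omega_B[-1]$ into the cone of $\Omega_B\to\mathfrak g^\vee\otimes\shO_B$ shifted by $-1$, which is exactly $\LL_{B/G}[-1]$. The map here is dual to the infinitesimal action, via $d\mu=\iota_{\xi}\omega$. Descending along $G$ then gives
\[
C_1/G\times_{X\sslash G}C_2/G\cong\Tot_{B/G}(\LL_{B/G}[-1]).
\]
\textbf{I expect this to be the main obstacle:} making the equivariant formality and the compatibility of the moment-map term with $\LL_{B/G}$ rigorous at the derived level, rather than only on tangent complexes.

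\textbf{Step 3: the Ext groups.} Use
\[
\Ext^\bullet(\shO_{C_1/G},\shO_{C_2/G})=\Hdot^\bullet\bigl(\text{fiber product},\ \omega^{\mathrm{rel}}\bigr),
\]
where the relative dualizing twist is $\det N_{B/C_2}$ placed in degree $\codim(B,C_2)$; this follows from the local Koszul computation. The function ring of $\Tot_{B/G}(\LL_{B/G}[-1])$ is $\Sym^\bullet(\mathbb T_{B/G}[1])^\vee$, i.e.\ derived (equivariant) Hodge forms $\bigoplus_p \Hdot^{q}(B/G,\Omega^p_{B/G})[-p]$, twisted by the line bundle. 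When $\det N_{B/C_2}$ is torsion, it carries a flat connection, giving the local system $\det(N_{B/C_2})_\delta$. The totalization is then the Hodge filtration's associated graded of twisted equivariant de Rham cohomology.

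\textbf{Step 4: Hodge-to-de Rham degeneration.} The hypotheses that $B$ is proper-over-affine and $h^0(\shO)^G<\infty$ are there to guarantee finiteness and Hodge-to-de Rham degeneration for $B/G$ with unitary torsion coefficients. I would invoke a degeneration result for quotient stacks: approximate $EG$ by finite-dimensional smooth varieties, then apply Deligne/Simpson-type degeneration for unitary rank-one local systems. This identifies the graded with
\[
\Hdot_G^{\bullet-\codim(B,C_2)}(B,\det(N_{B/C_2})_\delta).
\]
The special case $C_1=C_2=C$ follows because then $B=C$, $N=0$, and the conormal sequence splits trivially.
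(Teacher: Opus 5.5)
Your base change $C_1\times_{\mu^{-1}_{der}(c)}C_2\simeq (C_1\times_X C_2)\times_{\{c\}\times_{\mathfrak g^\vee}\{c\}}\{c\}$ is correct. The linear part of the moment-map term is indeed dual to the infinitesimal action. But the step you flag as the main obstacle is a genuine gap, not a technicality.

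Once you fix an identification $C_1\times_X C_2\simeq\Tot_B(\Omega_B[-1])$, the map to $\{c\}\times_{\mathfrak g^\vee}\{c\}=\Spec\Sym(\mathfrak g[1])$ becomes a $G$-equivariant $\shO_B$-algebra map $\Sym_{\shO_B}(\mathfrak g\otimes\shO_B[1])\to\Sym_{\shO_B}(T_B[1])$. Equivalently, it is a module map $\mathfrak g\otimes\shO_B[1]\to\bigoplus_k\wedge^kT_B[k]$. Its components with $k\geq 2$ are classes in $(\mathfrak g^\vee\otimes\Hdot^{k-1}(B,\wedge^kT_B))^G$. These classes are invisible on tangent complexes and in every local Koszul model. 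They are also not pinned down by the mere existence of an isomorphism $C_1\times_XC_2\simeq\Tot_B(\Omega_B[-1])$ over $C_1\times C_2$: composing with an automorphism over $C_1\times C_2$ can change them. ``Killing $\mathfrak g[1]$'' yields $\Sym(\TT_{B/G}[1])$ only if they vanish. Otherwise $\pi_*\shO$ is a non-formal twisted object, and the spectral sequence behind your Step 3 can have nonzero differentials. Your Steps 1--2 contain no argument for this vanishing. Also, an equivariant tubular model is not available algebraically; formality of $C_1\times_XC_2$ itself should come from the Arinkin--C\u{a}ld\u{a}raru--Hablicsek argument, run equivariantly.

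The paper supplies exactly the missing idea. It does not formalize $C_1\times_XC_2$ first; instead it runs the Arinkin--C\u{a}ld\u{a}raru construction directly inside the derived scheme $Y=\mu^{-1}_{der}(c)$. Write $C_1\times_YC_2=(C_1\times C_2)\times_{Y\times Y}Y$ and $\gamma\colon B\to C_1\times_YC_2$; then $\TT_\gamma[1]\simeq\LL_{B/G}[-1]$. The splitting $s\colon\Omega_B\to T_X|_B$ is automatically compatible with $d\mu$, since $d\mu$ kills $TC_1+TC_2$, so it gives a splitting $\LL_{B/G}[-1]\to\TT_Y|_B[-1]$. HKR for $Y$ ($Y\times_{Y\times Y}Y\simeq\Tot_Y(\TT_Y[-1])$, Ben-Zvi--Nadler) turns this into a global morphism $\Tot_B(\LL_{B/G}[-1])\to B\times_{Y\times Y}Y\to C_1\times_YC_2$. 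Being an isomorphism is then a local property, checked on Koszul--Tate models, so the global classes above never need to be computed. Your route could presumably be repaired by building the formality isomorphism for $C_1\times_XC_2$ from $s$ via HKR for $X$ and using naturality of HKR along $\mu$, which would make the composite the linear map $d\mu\circ s$. That argument is what is missing.

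Your Step 3 agrees with the paper: adjunction plus $i_2^!\shO_{C_2/G}\simeq\det(N_{B/C_2})[-\codim(B,C_2)]$.

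Two smaller corrections:
\begin{enumerate}
\item In Step 4 the approximations $B\times_GU_N$ are not proper, so Deligne's or Simpson's degeneration does not apply to them. The paper instead uses Kubrak--Prikhodko's Hodge-to-de Rham degeneration for $B/G$ (with $B$ proper over affine and $h^0(B,\shO)^G<\infty$), and removes the torsion twist by an equivariant cyclic cover.
\item For $C_1=C_2=C$, the conormal sequence $0\to T_C\to\Omega_X|_C\to\Omega_C\to0$ does not split automatically; its class lies in $\Hdot^1(C,T_C\otimes T_C)$. The splitting is a hypothesis, and $N_{B/C_2}=0$ only trivializes the determinant twist.
\end{enumerate}
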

The local system appears naturally in examples of Lagrangian intersections. For instance, given a $G$-invariant Morse-Bott regular function  $f: M\arw \CC$ on a smooth variety $M$, such that the critical subscheme $Z:=Z(df)\subset X$ is smooth. Then $N_{Z/M}^{\vee}\cong N_{Z/M}$ by the nondegeneracy of the Hessian $\nabla^2 f\in \Hdot^0(Z, \Sym^2 N_{Z/M}^{\vee})$. Thus 
$\det(N_{Z/M})^{\otimes 2}\cong \shO_Z$. Then $C_1=M$, the zero section and $C_2=\Gamma_{df}$, the graph of $df$ are $G$-invariant Lagrangians of $X$ with a smooth intersection $Z$, and the conormal sequence $Z\subset X$ splits. Therefore,
$$\Ext_{X\sslash G}^{\bullet}(\shO_{C_1/G}, \shO_{C_2/G})\cong \Hdot^{\bullet-\codim(Z, M)}(Z/G, \det(N_{Z/M})_{\delta}).$$
Another example of a 2-torsion local system arises when the canonical bundle $K_{C_i}$ of Lagrangians $C_i$ admit square roots $K_{C_i}^{1/2}$ which is equivalent to the existence of spin structures on $C_i~ (i=1,2)$. Then $K_{C_1}^{\vee}|_B\otimes K_{C_2}|_B\otimes \det(N_{B/C_2})^{\otimes 2}\cong \shO_B$ (see Lemma \ref{lem: canlag}) and $(K_{C_1}^{-1/2}|_B\otimes K_{C_2}^{1/2}|_B\otimes \det(N_{B/C_2}))_{\delta}$ is a 2-torsion $\CC$-local system. Physically, this is significant when these Lagrangians are interpreted as branes in a superstring theory, where each brane must carry a spin (or $\text{Spin}^c$) structure to ensure that worldsheet fermions are well-defined (see \cite{freed1999anomalies}). 

\begin{theorem}[=Theorem \ref{thm: can}]
    In Set-up (S), if the conormal sequence of $B\subset X$ splits, and $K_{C_i/G }^{1/2}~(i=1,2)$ exist. Suppose that $B$ is proper-over-affine and $h^0(B, \shO)^{G}<\infty$, then
    $$\Ext^{\bullet}_{X\sslash G}(K^{1/2}_{C_1/G}, K^{1/2}_{C_2/G})\cong \Hdot^{\bullet-\codim(B, C_2)}_G(B, (K_{C_1/G}^{-1/2}|_{B/G}\otimes K_{C_2/G}^{1/2}|_{B/G}\otimes \det(N_{B/C_2}))_{\delta}).$$
\end{theorem}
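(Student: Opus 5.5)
The plan is to reduce the statement to the first main theorem (Theorem \ref{mainthm} together with Corollary \ref{symredext2}) by twisting with the half-canonical bundles. Set $\mathcal{K}_i:=K^{1/2}_{C_i/G}$, a $G$-equivariant line bundle on $C_i$. Since these are line bundles on the Lagrangians and not pulled back from $X\sslash G$, we cannot simply untwist them on the ambient side. Instead we compute the Ext directly: the derived Hom is
\begin{align*}
\mathrm{RHom}_{X\sslash G}(i_{1*}\mathcal{K}_1, i_{2*}\mathcal{K}_2)\cong \mathrm{R}\Gamma\bigl(C_1/G\times_{X\sslash G} C_2/G,\ p_1^{*}\mathcal{K}_1^{\vee}\otimes p_2^{!}\mathcal{K}_2\bigr),
\end{align*}
obtained from base change along the derived fiber product and Grothendieck duality for $i_2$.

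By the first part of Theorem \ref{mainthm}, the fiber product is $\Tot_{B/G}(\LL_{B/G}[-1])$. The relative dualizing twist $p_2^{!}$ contributes $\det(N_{B/C_2})[-\codim(B,C_2)]$, exactly as in the untwisted case. The integrand then becomes the pullback to the $(-1)$-shifted cotangent stack of the line bundle
\[
\mathcal{L}:=K^{-1/2}_{C_1/G}|_{B/G}\otimes K^{1/2}_{C_2/G}|_{B/G}\otimes\det(N_{B/C_2}).
\]
We then take the pushforward to $B/G$. As in the proof of Corollary \ref{symredext2}, this turns the cohomology into equivariant Hodge cohomology $\bigoplus_{p}\Hdot^{\bullet-p}_G(B,\Omega^p_B\otimes\mathcal{L})$, with the shift by $\codim(B,C_2)$.

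Next we check that $\mathcal{L}$ is torsion. By Lemma \ref{lem: canlag}, $K_{C_1}^{\vee}|_B\otimes K_{C_2}|_B\otimes\det(N_{B/C_2})^{\otimes 2}\cong\shO_B$, and in the equivariant setting this should hold in $\Pic(B/G)$. Taking square roots, $\mathcal{L}^{\otimes 2}\cong\shO$, so $\mathcal{L}$ is $2$-torsion and carries a flat connection, giving the local system $\mathcal{L}_\delta$. One caveat: the choice of square roots may only make $\mathcal{L}^{\otimes 2}$ trivial up to a $2$-torsion ambiguity. This is harmless, because $\mathcal{L}^{\otimes 4}$ is trivial in any case and torsion is all that is needed.

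Finally, we use $\mathcal{L}$ as the twisting line bundle in place of $\det(N_{B/C_2})$ in Corollary \ref{symredext2}. Since $B$ is proper-over-affine and $h^0(B,\shO)^G<\infty$, the equivariant Hodge-to-de Rham degeneration for the torsion flat bundle $\mathcal{L}$ applies, via a finite étale cyclic cover trivializing $\mathcal{L}$ and taking the appropriate isotypic part. This identifies the Hodge cohomology with $\Hdot^{\bullet}_G(B,\mathcal{L}_\delta)$, which gives the claimed isomorphism.

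The main obstacle is identifying the twist on the fiber product. One must verify that $p_1^{*}\mathcal{K}_1^{\vee}\otimes p_2^{!}\mathcal{K}_2$ on $\Tot_{B/G}(\LL_{B/G}[-1])$ really is the pullback of $\mathcal{L}$, shifted by $\codim(B,C_2)$, compatibly with the splitting of the conormal sequence. Only then does the proof of Corollary \ref{symredext2} go through verbatim with $\det(N_{B/C_2})$ replaced by $\mathcal{L}$.
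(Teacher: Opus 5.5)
\textbf{Verdict: same route as the paper, but the step you flag as the main obstacle is wrong as posed and must be reordered.}

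Your overall route is the paper's. The paper applies (\ref{eq: generalexteq}) with $F_i=K^{1/2}_{C_i/G}$, gets $2$-torsion of $\mathcal{L}$ from Proposition \ref{prop: eqcanlag}, and finishes with the covering trick and Kubrak--Prikhodko degeneration.

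\textbf{The gap: $p_2^{!}$ is not $\pi^{*}$.} Write $W:=C_1/G\times_{X\sslash G}C_2/G\cong\Tot_{B/G}(\LL_{B/G}[-1])$, with $\pi\colon W\to B/G$ and $i_2\colon B/G\hookrightarrow C_2/G$ as in the paper; your $i_{k*}$ are the paper's $\alpha_{k*}$. Then $p_2=i_2\circ\pi$, so $p_2^{!}=\pi^{!}\,i_2^{!}$.
\begin{itemize}
\item The factor $i_2^{!}$ does contribute $\det(N_{B/C_2})[-m]$, with $m=\codim(B,C_2)$.
\item But $\pi^{!}$ is not $\pi^{*}$. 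Your integrand is $\pi^{!}\mathcal{L}[-m]$, not $\pi^{*}\mathcal{L}[-m]$.
\end{itemize}
This is not bookkeeping. If the integrand were $\pi^{*}\mathcal{L}[-m]$, pushing forward would give $\pi_*\shO_W\otimes\mathcal{L}=\bigoplus_p\wedge^p\TT_{B/G}[p]\otimes\mathcal{L}$. That is twisted polyvector-field cohomology in degrees $\bullet+p-m$, not Hodge cohomology. Already for $G=1$ and $C_1=C_2=B$ proper, it would put $\Hdot^q(B,\wedge^pT_B)$ into $\Ext^{q-p}$, which is wrong.

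The switch from $\TT_{B/G}$ to $\LL_{B/G}$ happens exactly in the duality for $\pi$. Since $\pi$ is affine, $\pi_*\pi^{!}\mathcal{L}=\mathcal{H}om_{B/G}(\pi_*\shO_W,\mathcal{L})=\prod_p\wedge^p\LL_{B/G}[-p]\otimes\mathcal{L}$. So the verification you propose (integrand $\cong$ pullback of $\mathcal{L}$ shifted by $m$) would fail. In the non-stacky case, $\pi^{!}\shO$ is $\pi^{*}K_B$ shifted by $\dim B$, not $\pi^{*}\shO$.

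\textbf{The fix, which is what the paper does: push forward along $\pi$ before dualizing.}
\begin{itemize}
\item By base change and the projection formula, $\alpha_2^{*}\alpha_{1*}\mathcal{K}_1\cong i_{2*}\bigl(\pi_*\shO_W\otimes\mathcal{K}_1|_{B/G}\bigr)$.
\item Adjunction for the closed immersion $i_2$ alone then gives $\Hom_{B/G}\bigl(\bigoplus_p\wedge^p\TT_{B/G}[p],\,\mathcal{L}[-m]\bigr)$. This is the $\mathcal{L}$-twisted Hodge cohomology of $B/G$, shifted by $m$.
\end{itemize}
In this order nothing needs to be identified on $W$: $(\mathcal{K}_1^{\vee}\otimes\mathcal{K}_2)|_{B/G}$ just rides along through the projection formula, and the obstacle disappears.

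\textbf{Minor points.}
\begin{itemize}
\item Your caveat about a $2$-torsion ambiguity is unnecessary. We have $\mathcal{L}^{\otimes 2}=K^{-1}_{C_1/G}|_{B/G}\otimes K_{C_2/G}|_{B/G}\otimes\det(N_{B/C_2})^{\otimes 2}$ regardless of which square roots are chosen. This is trivial in $\Pic(B/G)$ by Proposition \ref{prop: eqcanlag}: the isomorphisms of Lemma \ref{lem: canlag} come from $\omega$ and the conormal sequences, hence are $G$-equivariant, and the $\det\mathfrak{g}$ factors cancel.
\item Your isotypic-summand argument on the cyclic cover is fine and equivalent to the paper's dimension count.
\end{itemize}
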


For $X$ with a $G$-equivariant ample bundle $L$, the semistable reduction $X^{ss}(L)\sslash  G$ ($X^{ss}\sslash  G$ for short) and Lagrangian $C^{ss}(L|_C)/G$ ($C^{ss}/G$ for short) yield a functorial restriction map
\begin{align}\label{map: ext}
    \Ext_{X\sslash  G}^{\bullet}(\shO_{C_1/G}, \shO_{C_2/G})\arw \Ext_{X^{ss}\sslash  G}^{\bullet}(\shO_{C_1^{ss}/G}, \shO_{C_2^{ss}/G}),
\end{align}
which is equivalent to a twisted Kirwan map if $\det(N_{B/C_2})$ is torsion:
\begin{align}\label{map: kirwan}
    \Hdot^{\bullet}_G(B, \det(N_{B/C_2})_{\delta})\arw \Hdot^{\bullet}_G(B^{ss}, 
\det(N_{B/C_2})_{\delta}).
\end{align} 
The twisted Kirwan map (\ref{map: kirwan}) and also (\ref{map: ext}) are surjective by a twisted Kirwan theorem  \ref{thm:twistedkirwan} in the Appendix.
\begin{theorem}[=Theorem \ref{mainthm2}+Theorem \ref{thm: kircan}] In Set-up (S), assume that the conormal sequence $B\subset X$ splits. Given a $G$-equivariant ample bundle $L$ on $X$. Suppose that  $B$ is proper over affine, and $h^0(B, \shO)^G<\infty$.
    \begin{enumerate}
        \item If $\det(N_{B/C_2})$ is torsion in $\Pic(B/G)$, then the natural restriction 
    $$\Ext_{X\sslash  G}^{\bullet}(\shO_{C_1/G}, \shO_{C_2/G})\arw \Ext_{X^{ss}\sslash  G}^{\bullet}(\shO_{C_1^{ss}/G}, \shO_{C_2^{ss}/G})$$
    is surjective.
    \item If $K_{C_1/G}^{1/2}$ and $K_{C_2/G}^{1/2}$ exist, then the natural restriction 
    $$\Ext_{X\sslash  G}^{\bullet}(K_{C_1/G}^{1/2}, K_{C_2/G}^{1/2})\arw \Ext_{X^{ss}\sslash  G}^{\bullet}(K_{C^{ss}_1/G}^{1/2}, K_{C^{ss}_2/G}^{1/2})$$
    is surjective.
    \end{enumerate}
\end{theorem}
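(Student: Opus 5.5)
The plan is to reduce both surjectivity statements to the twisted Kirwan theorem \ref{thm:twistedkirwan} by identifying the restriction map on Ext groups with a restriction map in equivariant cohomology with local coefficients. For part (1), apply the first two theorems twice: once on $X$ and once on the open $G$-invariant symplectic subvariety $X^{ss}=X^{ss}(L)$. On $X^{ss}$ the Lagrangians are $C_i^{ss}=C_i\cap X^{ss}$, and their intersection is $B^{ss}=B\cap X^{ss}$, which is open in $B$.

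The hypotheses of Set-up (S) restrict to $X^{ss}$: smoothness, the splitting of the conormal sequence (restrict the splitting), and torsion-ness of $\det(N_{B^{ss}/C_2^{ss}})=\det(N_{B/C_2})|_{B^{ss}}$. We also need $B^{ss}/G$ to be well behaved, or at least need the analogue of the Ext computation to hold on $X^{ss}$. Here I would use that $B^{ss}\to B^{ss}/\!\!/G$ is a good quotient with $B^{ss}/\!\!/G$ proper over the affine quotient. So the cohomological finiteness conditions used in the proof of the Ext theorem (proper-over-affine, $h^0(\shO)^G<\infty$) still hold in the needed form. If the proof of Corollary \ref{symredext2} really uses properness of $B$ itself, I would instead invoke only the derived fiber product statement on $X^{ss}$. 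I would then rerun the computation $\Ext\cong R\Gamma(\Tot_{B/G}(\LL_{B/G}[-1]),\ldots)$ directly, which yields twisted de Rham cohomology of $B^{ss}/G$.

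The key step is naturality. The isomorphism of the first theorem comes from the derived fiber product $C_1/G\times_{X\sslash G}C_2/G\cong \Tot_{B/G}(\LL_{B/G}[-1])$. Together with the identification $\Ext^\bullet(\shO_{C_1/G},\shO_{C_2/G})$ as cohomology of a twisted de Rham-type complex on $B/G$ (the $\Omega^{-\bullet}$ of the $(-1)$-shifted cotangent twisted by $\det N_{B/C_2}$, then Hodge-to-de Rham degeneration), this is compatible with restriction to the open substack $X^{ss}\sslash G$. The reason is that every step is local and commutes with base change along the open immersion $X^{ss}\sslash G\hookrightarrow X\sslash G$. Thus we get a commutative square whose horizontal arrows are the isomorphisms and whose right vertical arrow is the restriction $\Hdot_G^{\bullet}(B,\det(N_{B/C_2})_\delta)\to \Hdot_G^\bullet(B^{ss},\det(N_{B/C_2})_\delta)$. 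The shift by $\codim(B,C_2)$ is the same on both sides. The one point to check is that degeneration of Hodge-to-de Rham on $B^{ss}/G$ is compatible with the degeneration on $B/G$. Surjectivity then follows from Theorem \ref{thm:twistedkirwan} applied to $B$ with the $G$-linearized ample bundle $L|_B$ and the torsion equivariant local system. This needs $B^{ss}(L|_B)=B\cap X^{ss}(L)$, which holds since semistability is defined by invariant sections of powers of $L$ and $B$ is closed. That is a standard fact for closed subvarieties, which I would cite.

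Part (2) runs identically using Theorem \ref{thm: can}. The local system is now $(K_{C_1/G}^{-1/2}|_B\otimes K_{C_2/G}^{1/2}|_B\otimes\det N_{B/C_2})_\delta$, which is $2$-torsion by Lemma \ref{lem: canlag}. Its restriction to $B^{ss}$ is the corresponding local system for the semistable data, because $K_{C_i^{ss}/G}^{1/2}=K_{C_i/G}^{1/2}|_{C_i^{ss}/G}$. Twisted Kirwan surjectivity again finishes the argument. I expect the main obstacle to be the naturality of the Ext-to-cohomology isomorphism under the open restriction. In particular, the Hodge-to-de Rham degeneration and the finiteness hypotheses must be available on the noncompact $B^{ss}$. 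I would address this by working with the explicit twisted de Rham complex on $\Tot_{B/G}(\LL_{B/G}[-1])$, whose restriction is manifestly functorial. Where needed, I would use the Kirwan stratification of $B$ (equivariant perfection) to transfer degeneration from $B$ to $B^{ss}$.
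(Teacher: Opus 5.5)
Your proposal follows the paper's proof. The paper likewise applies the Ext computation (Corollary \ref{symredext2}, resp.\ Theorem \ref{thm: can}) on both $X\sslash G$ and $X^{ss}\sslash G$, and gets Hodge-to-de Rham degeneration for $B^{ss}/G$ from \cite[Example 3.1.7]{kubrak2022hodge}, which is essentially your good-quotient argument. It then asserts that all identifications are natural, so that the restriction on Ext becomes $Kir_{\mathfrak{L}}$ (resp.\ $Kir_{\mathfrak{E}}$), and concludes by Theorem \ref{thm:twistedkirwan}.

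The naturality point you single out is also only asserted in the paper, and your proposed fix does not close it. The Ext groups compute twisted Hodge cohomology of $B/G$ and $B^{ss}/G$, not de Rham cohomology. So passing from Kirwan surjectivity on de Rham cohomology to surjectivity on Ext requires the restriction map to be strict for the Hodge filtration (e.g.\ via mixed Hodge theory). Working with functions on $\Tot_{B/G}(\LL_{B/G}[-1])$ does not by itself give this.
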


\subsection*{Outline} Section \ref{sec: clean} reviews clean intersections, Ext groups, and excess bundles. Section \ref{sec: lagint} analyzes Lagrangian intersections and excess sequences. Section \ref{sec:eqsym} proves our main Theorem \ref{mainthm} for derived Lagrangian intersections of $X\sslash  G$ and its several consequences. Section \ref{sec:git} discusses the relationship between the Ext groups of equivariant Lagrangians and their semi-stable quotient via the twisted Kirwan surjectivity. The Appendix discusses the twisted Kirwan surjectivity and gives a detailed proof of it based on Kirwan's original argument. 

\subsection*{Related works} In \cite{gunningham2023deformation},  Gunningham and Safronov describe the RHom complex between a pair of complex Lagrangians of $X$ in terms of deformation quantization. In \cite{mladenov2024formality}, Mladenov relates the RHom complex of a pair of complex Lagrangians to the de Rham cohomology of the intersection with the coefficient in a 2-torsion local system under the assumption of smoothness of the intersection (i.e., clean intersection). They are related to the Hochschild homology of the 3d B-model 2-category by Kapustin-Rozansky-Saulina \cite{kapustin2009three}. 

\subsection*{Acknowledgments} 
This research is substantially supported by grants from the Research Grants Council of the Hong Kong Special Administrative Region, China (Project No. CUHK14306322, CUHK14305923 and CUHK14302224) and direct grants from the Chinese University of Hong Kong. The first and second authors thank Yalong Cao for insightful discussions on shifted symplectic geometry. The second author is particularly grateful to Qingyuan Jiang for both discussions on derived algebraic geometry and the hospitality during his visit at the Hong Kong University of Science and Technology. Special thanks also go to Ki-Fung Chan and Leon Li for sharing their expertise on mirror symmetry and real Lagrangian Floer theory. The authors extend their appreciation to Ziming Ma and Michael McBrean for valuable advice on this project.

\subsection*{Notations} Throughout this paper, $G$ is a linear reductive group over $\CC$ and all varieties, stacks and derived stacks are over $\CC$. For a $G$-variety $X$, we write the quotient stack of $X$ by $G$ as $X/G$. For any two morphisms between derived stacks with the same target $C_1\arw X$ and $C_2\arw X$, we write $C_1\times_X C_2$ as the derived fiber product of the two morphisms. Functors $\otimes, f_*, f^{*}$ are viewed as derived functors, and $\shH^i(-\otimes-), \shH^i(f_*), \shH^i(f^{*})$ denote their cohomology. Denote $D_{qc}(Y)$ the $\infty$-category of quasi-coherent sheaves on a derived stack $Y$. For any vector bundle or perfect complex $E$ on $Y$, total space of $E$ means $\Spec_Y{\Sym^{\bullet}(E^{\vee})}$, which is denoted as $\Tot_Y(E)$.

\section{Clean intersection and Excess bundles}\label{sec: clean}
Let $X$ be a smooth quasi-projective variety with two smooth closed subvarieties $j_1: C_1\hookrightarrow X$, $j_2: C_2\hookrightarrow X$. Consider the intersection diagram \begin{equation}\label{intdiag} 
\begin{tikzcd}
B=C_1\cap C_2 \arrow[r,hook, "i_2"] \arrow[d,hook,"i_1"]
& C_2 \arrow[d,hook', "j_2"'] \\
 C_1 \arrow[r,hook, "j_1"]
& X.
\end{tikzcd}
 \end{equation}
\subsection{Basic Definition and Transversality}
\begin{definition}
The intersection of $C_1$ and $C_2$ is called \emph{clean} if the scheme-theoretic intersection $B:=C_1\cap C_2$ is smooth. 
The quotient bundle $E=\dfrac{TX|_B}{TC_1|_B+TC_2|_B}$ is called the \emph{excess bundle} of the intersection of $C_1$ and $C_2$.
\end{definition}

\begin{remark}
\begin{enumerate}
    \item Transverse intersections occur when $T_xC_1+T_xC_2=T_xX$ holds for every $x\in B$, equivalently $E=0$. Thus $E$ can be viewed as a measure of the failure of transversality of the intersection.
    \item When $C_1=C_2=B$, the excess bundle $E$ equals to the normal bundle $N_{B/X}$.
\end{enumerate}
 \end{remark}
 The next lemma gives an alternative description of the excess bundle.
\begin{lemma}\label{excess}
   There is a short exact sequence of vector bundles on $B$:
$$0\arw N_{B/C_2}\arw N_{C_1/X}|_B\arw E\arw 0.$$
\end{lemma}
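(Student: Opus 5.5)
The plan is to build the sequence out of natural maps of tangent bundles restricted to $B$, and to reduce exactness to one linear-algebra fact about clean intersections. Set $N_{B/C_2}=TC_2|_B/TB$ and $N_{C_1/X}|_B=TX|_B/TC_1|_B$. The inclusion $TC_2|_B\subset TX|_B$ induces a map
$$\alpha: N_{B/C_2}\arw N_{C_1/X}|_B, \qquad v+TB\mapsto v+TC_1|_B .$$
This is well defined because $TB\subset TC_1|_B$. Next, since $TC_1|_B\subset TC_1|_B+TC_2|_B$, the quotient map $TX|_B\to E$ factors through $N_{C_1/X}|_B$, giving a surjection
$$\beta: N_{C_1/X}|_B\arw E .$$
By construction $\beta\circ\alpha=0$. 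Also, $\ker\beta=(TC_1|_B+TC_2|_B)/TC_1|_B$, which is exactly the image of $\alpha$. So the sequence is exact in the middle and on the right.

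The key step is injectivity of $\alpha$. Its kernel is $(TC_2|_B\cap TC_1|_B)/TB$, so I need the equality of subbundles $TC_1|_B\cap TC_2|_B=TB$. The inclusion $\supseteq$ is clear. For the reverse, I will use the scheme-theoretic description of the intersection. Locally near $x\in B$, take ideals $I_1,I_2$ with $I_B=I_1+I_2$. Then the Zariski tangent space of the scheme $B$ at $x$ is the annihilator of $d(I_1)+d(I_2)$ in $T_xX$, which equals $T_xC_1\cap T_xC_2$. Because $B$ is smooth, the Zariski tangent space is $T_xB$. Hence $T_xB=T_xC_1\cap T_xC_2$ pointwise, which is where the cleanness hypothesis enters. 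I expect this identification to be the main obstacle, since it is the only place where more than formal quotient manipulation is needed.

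With the pointwise equality in hand, the remaining points are routine. First, $\dim(T_xC_1+T_xC_2)=\dim C_1+\dim C_2-\dim B$ is constant along each connected component of $B$. So $TC_1|_B+TC_2|_B$ is a subbundle, $E$ is a vector bundle, and all the maps above are maps of vector bundles. A sequence of vector bundles that is exact on every fiber is exact as a sequence of bundles, which gives the short exact sequence
$$0\arw N_{B/C_2}\arw N_{C_1/X}|_B\arw E\arw 0.$$
As a consistency check, the ranks agree: $(\dim C_2-\dim B)+\operatorname{rk}E=\dim X-\dim C_1$.
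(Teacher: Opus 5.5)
Your proposal is correct and takes the same approach as the paper. The paper identifies the kernel of $N_{C_1/X}|_B\arw E$ as $(TC_1|_B+TC_2|_B)/TC_1|_B\cong TC_2|_B/TB$, which silently uses $TC_1|_B\cap TC_2|_B=TB$; you justify that equality via the Zariski tangent space of the scheme-theoretic intersection, and you also check the constant-rank condition that makes $E$ a vector bundle.
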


\begin{proof}
    The kernel of the surjective morphism $N_{C_1/X}|_B=\dfrac{TX|_B}{TC_1|_B}\arw  E=\dfrac{TX|_B}{TC_1|_B+TC_2|_B}$ is $\dfrac{TC_1|_B+TC_2|_B}{TC_1|_B}\cong\dfrac{TC_2|_B}{TB}=N_{B/C_2}$.    
\end{proof}
\begin{remark}
    The description of the excess bundle in Lemma \ref{excess} holds for derived schemes or stacks by replacing the normal bundles with the 1-shifted relative tangent complexes. See subsection \ref{subsec: excess}.
\end{remark}

\subsection{Derived Intersections and Ext groups}\label{subsec: derint}
We aim to understand 
\begin{align}
    \Ext_X^{\bullet}(j_{1*}\shO_{C_1}, j_{2*}\shO_{C_2}),
    \end{align}
and more generally, 
\begin{align}
    \Ext_X^{\bullet}(j_{1*}F_1, j_{2*}F_2),
    \end{align}
where $F_1$ and $F_2$ are perfect complexes on $C_1$ and $C_2$ respectively. 
By the adjunction of pullback-pushforward,      \begin{align}\label{ext}
    & \Ext_X^{\bullet}(j_{1*}\shO_{C_1}, j_{2*}\shO_{C_2})\cong \Ext_{C_2}^{\bullet}(j_2^{*}j_{1*}\shO_{C_1}, \shO_{C_2}),\\ 
    & \Ext_X^{\bullet}(j_{1*}F_1, j_{2*}F_2)\cong \Ext_{C_2}^{\bullet}(j_2^*j_{1*}F_1, F_2). \label{ext'}
\end{align}

The base-change property of the intersection diagram (\ref{intdiag}) gives a natural morphism
$$ j_2^{*}j_{1*}\shO_{C_1}\arw i_{2*}i_1^{*}\shO_{C_1}=i_{2*}\shO_B,$$
which is not an isomorphism unless $E$ is trivial. 
To address this issue, we introduce the following derived intersection diagram
\begin{equation}\label{derintdiag}
   \begin{tikzcd}
    B
    \arrow[drr, bend left,"i_1"] \arrow[ddr, bend right, "i_2"] \arrow[dr, "\gamma"] &&\\
    & C_1\times_X C_2\arrow[r,"\tilde{j_2}"] \arrow[d,"\tilde{j_1}"]\arrow[ul, dotted, shift left=2, "\pi"]
& C_1 \arrow[d, "j_1"] \\
 & C_2 \arrow[r, "j_2"]
& X.
   \end{tikzcd}
  \end{equation}
The morphism $\gamma$ is provided by the universal property of the derived fiber product $C_1\times_X C_2$, which is an isomorphism on the classical truncation: $\gamma_{cl}: B\cong (C_1\times_X C_2)_{cl}$. In other words,
$$\shH^0(\shO_{C_1}\otimes_{\shO_X} \shO_{C_2})\cong \shO_B.$$
The derived scheme $C_1\times_X C_2$ can be viewed as a thickening of its classical truncation $B$. Roughly speaking, a derived scheme over $\CC$ is a derived ringed space $(Y, \shO_Y)$, where $\shO_Y$ is a sheaf of commutative differential graded algebras over $\CC$ with quasi-coherent cohomology sheaves on its classical truncation scheme $(Y, \shH^0(\shO_Y))$. The readers can refer to \cite{lurie2004derived} and \cite{toen2014derived} for background and more details on derived algebraic geometry.

By base-change property,
$$j_2^{*}j_{1*}\shO_{C_1}\cong \tilde{j_2}_*\shO_{C_1\times_X C_2},\qquad j_2^{*}j_{1*}F_1\cong \tilde{j_2}_*\tilde{j_1}^*F_1.$$ If the inclusion $\gamma: B\arw C_1\times_X C_2$ \emph{splits} as derived schemes over $C_1\times C_2$, \emph{i.e.}, $\gamma$ admits  a left inverse $\pi$ (which is dotted in (\ref{derintdiag})) such that both $\tilde{j_1}$ and $\tilde{j_2}$ factors $\pi$, then
\begin{align}\label{eq:bc1}
& \tilde{j_2}_*\shO_{C_1\times_X C_2}=i_{1*}\pi_*\shO_{C_1\times_X C_2},\\
&\tilde{j_2}_*\tilde{j_1}^*F_1=i_{1*}\pi_*\pi^*i_2^*F_1\cong i_{1*}(\pi_*\shO_{C_1\times_X C_2}\otimes i_2^*F_1),\label{eq:bc2}
\end{align}
where the last $\cong$ comes from the projection formula. 
Thus, the calculation of the Ext groups (\ref{ext}) and (\ref{ext'}) can be translated to calculate $\pi_*\shO_{C_1\times_X C_2}$ if such a left inverse exists. For that, we need a stronger condition that 
\begin{align}\label{fib}
    C_1\times_X C_2\cong \Tot_B(F),
\end{align}
where $F$ is a perfect complex on $B$, and (\ref{fib}) is called a \emph{fibration} structure on the derived intersection in the sense of \cite{arinkin2012self}. In loc. cit, the authors study the self-intersection case, and later clean intersection case is considered in \cite{arinkin2019formality} by Arikin-C\u{a}ld\u{a}raru-Hablicsek (or see Theorem \ref{ACH} below). For instance, if $X=M\times M$, and $C_1=C_2=\Delta_M\subset X$ the diagonal, then $\shO_{\Delta}\otimes_{M\times M}\shO_{\Delta}=\Spec_M(\Sym(\Omega_M[1]))$ which is known as the HKR isomorphism \cite{swan1996hochschild}. 

\subsection{Cohomology Sheaves of Derived Tensor Product}
\begin{proposition}[C\u{a}ld\u{a}raru-Katz-Sharpe\cite{caldararu2003d}]\label{tor}
    Let $C_1$ and $C_2$ be two smooth subvarieties of $X$ with a clean intersection $B$, and $E$ be the excess bundle. Then for each $k$,
$$\shH^{-k}(\shO_{C_1}\otimes_{\shO_X} \shO_{C_2})\cong \wedge^k E^{\vee}.$$
\end{proposition}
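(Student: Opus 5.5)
The claim is local on $X$ once the isomorphism is defined intrinsically, so I will compute $\mathcal{T}or$ sheaves locally and then identify them globally. Since Tor sheaves of coherent sheaves can be computed after restricting to an open neighbourhood of any point of $B$, the statement is local. The identification must still be canonical, and I expect this to be the main obstacle. To make it canonical, I first construct a natural map $\wedge^k E^{\vee}\to \shH^{-k}(\shO_{C_1}\otimes_{\shO_X}\shO_{C_2})$ globally and then check that it is an isomorphism locally.

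\textbf{Step 1 (degree one and the global map).} The derived tensor product $\shO_{C_1}\otimes_{\shO_X}\shO_{C_2}$ carries a graded-commutative algebra structure on its cohomology sheaves. Hence it suffices to produce a natural isomorphism $\shH^{-1}\cong E^{\vee}$; the product then induces a map $\wedge^k\shH^{-1}\to\shH^{-k}$.

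For $\shH^{-1}$, apply $-\otimes_{\shO_X}\shO_{C_2}$ to the sequence $0\to I_{C_1}\to\shO_X\to\shO_{C_1}\to 0$. This gives
$$\shH^{-1}=\ker\big(I_{C_1}\otimes\shO_{C_2}\to\shO_{C_2}\big).$$
Restricting further to $B$ factors this through $N^{\vee}_{C_1/X}|_B\to I_B/I_B^2=N^{\vee}_{B/X}$. The kernel of $N^{\vee}_{C_1/X}|_B\to N^\vee_{B/C_2}$ is $E^\vee$, by dualising Lemma \ref{excess}. One then checks that $\shH^{-1}$ is a $\shO_B$-module and is identified with this kernel.

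\textbf{Step 2 (local model).} Near a point of $B$, cleanness lets me choose local coordinates (analytically or étale-locally; this suffices, since Tor commutes with flat base change) of the form $(x,y,z,w)$ with
$$C_1=\{y=0,\,w=0\},\qquad C_2=\{z=0,\,w=0\}.$$
Here $w$ has $\operatorname{rank}E$ components. To see this, the conormal directions of $C_1$ split into those cutting $B$ inside $C_2$ (the coordinates $y$, corresponding to $N_{B/C_2}$) and the excess directions $w$. Producing such coordinates is a standard clean-intersection normal form: lift a basis of $N^\vee_{C_1}|_B$ adapted to the sequence in Lemma \ref{excess} and use smoothness of $B$.

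In these coordinates, resolve $\shO_{C_1}$ by the Koszul complex on $(y,w)$ and tensor with $\shO_{C_2}=\shO/(z,w)$. The $y$'s form a regular sequence on $\shO_{C_2}$, while the $w$'s act by zero. The result is therefore quasi-isomorphic to the Koszul complex on $w$ over $\shO_B$ with zero differential, whose degree $-k$ cohomology is $\wedge^k$ of the free module on the $w$-generators. That module is exactly $E^\vee$ locally.

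\textbf{Step 3 (conclusion).} In the local model, the multiplicative map of Step 1 is the identity of exterior algebras, so it is an isomorphism everywhere. This gives $\shH^{-k}(\shO_{C_1}\otimes_{\shO_X}\shO_{C_2})\cong\wedge^kE^\vee$ for each $k$.

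The delicate points are the existence of the adapted coordinates and the compatibility of the global degree-one identification with the local Koszul computation. I would handle these by working in the completed local ring, or in the analytic topology, where the normal form is straightforward.
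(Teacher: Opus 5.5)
Your proof is correct. The paper does not prove this proposition itself; it is quoted from C\u{a}ld\u{a}raru--Katz--Sharpe.

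The closest argument in the text is the local Koszul computation in the proof of Theorem \ref{mainthm}. There $I=I_{C_1}$ is generated by a regular sequence whose first $k$ terms cut out $B$ inside $C_2$ (obtained via Vasconcelos), and one reads off $R/I\otimes_R R/J\cong\Sym_{R/(I+J)}(E^{\vee}[1])$. Your Step 2 is the same computation. You also normalise the remaining generators into $I_{C_1}\cap I_{C_2}$, which is what makes the differential on the excess generators vanish.

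Your Steps 1 and 3 go beyond the paper. The intrinsic map $\mathcal{T}or_1=(I_{C_1}\cap I_{C_2})/I_{C_1}I_{C_2}\to I_{C_1}/(I_{C_1}^2+I_{C_1}I_{C_2})=N^{\vee}_{C_1/X}|_B$ lands in $E^{\vee}=\ker(N^{\vee}_{C_1/X}|_B\to N^{\vee}_{B/C_2})$, because elements of $I_{C_2}$ die in $N^{\vee}_{B/C_2}$. Combined with the multiplicativity of the Tor algebra, this gives a global morphism $\wedge^kE^{\vee}\to\shH^{-k}$. That morphism is an isomorphism because it is one in the local model.

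This step is not cosmetic. The paper treats the statement as purely local, but locally free sheaves that are locally isomorphic need not be globally isomorphic. Moreover, the $E_2$-page of Proposition \ref{prop: ss} relies on the global identification.

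You also do not need analytic or étale coordinates; the normal form exists in the Zariski local ring:
\begin{enumerate}
\item Lift generators of $(I_{C_1}+I_{C_2})/I_{C_2}$ to elements $y\in I_{C_1}$.
\item Complete the $y$'s to a minimal generating set of $I_{C_1}$, and subtract multiples of the $y$'s from the new generators to get $w\in I_{C_1}\cap I_{C_2}$.
\item Complete the $w$'s to a minimal generating set $(z,w)$ of $I_{C_2}$.
\end{enumerate}
Then $I_{C_1}+I_{C_2}=(y,z,w)$ has exactly $\codim(B,C_2)+\codim(C_2,X)=\codim(B,X)$ generators. Since $B$ is smooth, $y,z,w$ are therefore part of a regular system of parameters.
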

By the Grothendieck spectral sequence, it is easy to get the following proposition stated in \cite{caldararu2003d}.
\begin{proposition}[C\u{a}ld\u{a}raru-Katz-Sharpe\cite{caldararu2003d}]\label{prop: ss}
    Let $C_1$ and $C_2$ be two smooth subvarieties of $X$ with a clean intersection $B$, and $E$ be the excess bundle. Then there are spectral sequences
    \begin{align}
        &E_2^{p,q}=\Hdot^q(B, \wedge^{p-m}E\otimes det(N_{B/C_2}))\Longrightarrow \Ext_X^{p+q}(j_{1*}\shO_{C_1},j_{2*}\shO_{C_2}),\\
        &E_2^{p,q}=\Hdot^q(B, (F_1^{\vee}\otimes F_2)|_B\otimes\wedge^{p-m}E\otimes \det(N_{B/C_2}))\Longrightarrow \Ext_X^{p+q}(j_{1*}F_1,j_{2*}F_2),
    \end{align}
where $m=\codim (B, C_2)$, and $F_1$(resp. $F_2$) is a perfect complex on $C_1$(resp. $C_2$).
\end{proposition}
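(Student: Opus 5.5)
We derive both spectral sequences from the adjunction isomorphisms (\ref{ext}) and (\ref{ext'}) together with Proposition \ref{tor}, working on $C_2$. Write $\mathcal{K}:=j_2^*j_{1*}\shO_{C_1}\cong \tilde{j_2}_*\shO_{C_1\times_X C_2}$. This is a complex on $C_2$ supported on $B$, and by Proposition \ref{tor} its cohomology sheaves are $\shH^{-k}(\mathcal{K})\cong i_{2*}\wedge^k E^{\vee}$.

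\textbf{First spectral sequence.} Apply $R\Hom_{C_2}(-,\shO_{C_2})$ to $\mathcal{K}$ and filter by its canonical truncations. This gives the hypercohomology (Grothendieck) spectral sequence
$$E_2^{a,b}=\Ext^{a}_{C_2}\bigl(\shH^{-b'}(\mathcal{K}),\shO_{C_2}\bigr)\Longrightarrow \Ext^{\bullet}_{C_2}(\mathcal{K},\shO_{C_2}),$$
with the indices arranged so that the total degree is $a+b'$. Each term is computed by Grothendieck duality for the closed embedding $i_2$ of codimension $m$:
$$R\mathcal{H}om_{C_2}(i_{2*}V,\shO_{C_2})\cong i_{2*}\bigl(V^{\vee}\otimes \det N_{B/C_2}\bigr)[-m].$$
Hence $\Ext^{a}_{C_2}(i_{2*}\wedge^kE^{\vee},\shO_{C_2})\cong \Hdot^{a-m}(B,\wedge^kE\otimes\det N_{B/C_2})$.

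To get the stated form, I would use the local-to-global spectral sequence rather than the global one. First compute the sheaves $\sExt^{p}_{C_2}(\mathcal{K},\shO_{C_2})$. Their $E_2$ page is concentrated so that $\sExt^{k+m}(i_{2*}\wedge^kE^\vee,\shO)=i_{2*}(\wedge^kE\otimes\det N_{B/C_2})$ contributes in degree $p=k+m$. Setting $k=p-m$ and then taking $\Hdot^q(B,-)$ produces the $E_2^{p,q}$ displayed in the statement. The reindexing is routine.

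\textbf{Second spectral sequence.} Replace $\mathcal{K}$ by $j_2^*j_{1*}F_1\cong \tilde{j_2}_*\tilde{j_1}^*F_1$. By the projection formula its cohomology sheaves are $i_{2*}(\wedge^kE^{\vee}\otimes F_1|_B)$, where strictly one should take a further filtration if $F_1$ is a complex; for vector bundles this is exact. Dualizing into $F_2$ and applying $i_2^!F_2=F_2|_B\otimes\det N_{B/C_2}[-m]$ gives the term $(F_1^\vee\otimes F_2)|_B\otimes\wedge^{p-m}E\otimes\det N_{B/C_2}$. The rest of the argument is identical to the first case.

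\textbf{Main obstacle.} The delicate step is controlling the local $\sExt$ computation when $\mathcal{K}$ has several cohomology sheaves. One must check that the spectral sequence computing $\sExt^{p}(\mathcal{K},\shO_{C_2})$ from the sheaves $\shH^{-k}(\mathcal{K})$ collapses locally. I expect it does, because locally $C_1\times_X C_2$ is a Koszul-type derived scheme: locally on $B$ one may take $X$ to be a product in which $\mathcal{K}$ splits as $i_{2*}\bigoplus_k \wedge^kE^\vee[k]$. Gluing only yields a filtration, not a splitting, so the global statement remains a spectral sequence. This is consistent with the statement asserting only a spectral sequence, with degeneration studied later under splitting hypotheses.
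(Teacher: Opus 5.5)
Your proposal is correct and is essentially the paper's argument, which the paper only sketches as ``by the Grothendieck spectral sequence'': adjunction to $C_2$, Proposition~\ref{tor} for the cohomology sheaves of $j_2^*j_{1*}\shO_{C_1}$, and Grothendieck duality $i_2^{!}\shO_{C_2}\cong\det(N_{B/C_2})[-m]$. The ``main obstacle'' you flag is not one: $\sExt^a_{C_2}(i_{2*}V,\shO_{C_2})=0$ for $a\neq m$ when $V$ is locally free on $B$, so the local spectral sequence lies in a single row and degenerates for degree reasons without any local Koszul splitting, and your first global hyper-Ext spectral sequence already gives the stated $E_2$ page under the reindexing $p=k+m$, $q=a-m$.
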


\subsection{Criterion on Fibration Structure}
Proposition \ref{tor} implies that $C_1\times_X C_2$ looks like $\Tot_B(E[-1])$ locally. However, it is usually not the case globally. Here is a criterion for $C_1\times_X C_2$ being isomorphic to $\Tot_B(E[-1])$ obtained by Arikin-C\u{a}ld\u{a}raru-Hablicsek\cite{arinkin2019formality}. 
\begin{theorem}[Arikin-C\u{a}ld\u{a}raru-Hablicsek \cite{arinkin2019formality}]\label{ACH}
Let $C_1$ and $C_2$ be two smooth subvarieties of $X$ with a clean intersection $B$, and $E$ be the excess bundle. Then the derived fiber product
 $C_1\times_X C_2\cong \Tot_B(E[-1])=\Spec_B(\Sym^{\bullet} E^{\vee}[1])$ as derived schemes over $C_1\times C_2$ if and only if the excess bundle sequence
 \begin{align}\label{excessplit}
    0\arw TC_1|_B+TC_2|_B\arw TX|_B \arw E\arw 0 
 \end{align}
splits.
\end{theorem}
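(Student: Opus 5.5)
The natural approach is deformation-theoretic. Set $W:=C_1\times_X C_2$ and let $\gamma\colon B\to W$ be the map from (\ref{derintdiag}), which is an isomorphism on classical truncations. By Proposition \ref{tor}, $\shH^{-k}(\shO_W)\cong\wedge^k E^{\vee}$. A structure $W\cong\Tot_B(E[-1])$ over $C_1\times C_2$ therefore amounts to two pieces of data. The first is a retraction $\pi\colon W\to B$ over $C_1\times C_2$, i.e.\ a map of sheaves of cdgas $\shO_B\to\shO_W$ compatible with the maps from $\shO_{C_1}\otimes\shO_{C_2}$. The second is an $\shO_B$-linear map $E^{\vee}[1]\to\shO_W$ inducing the identity on $\shH^{-1}$. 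Given both, the induced cdga map $\Sym^{\bullet}_{\shO_B}(E^{\vee}[1])\to\shO_W$ is a quasi-isomorphism, because on cohomology sheaves it is the map $\wedge^kE^{\vee}\to\shH^{-k}(\shO_W)$ of Proposition \ref{tor}, which is the natural multiplicative isomorphism. I would first check this multiplicativity locally, using a Koszul resolution of $\shO_{C_1}$ adapted to $B$.

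So everything reduces to the obstruction theory of the Postnikov tower of $\shO_W$. Let $W_{\le 1}$ denote the truncation $\tau_{\ge -1}\shO_W$. It is a square-zero extension of $\shO_B$ by $E^{\vee}[1]$ relative to $C_1\times C_2$. Such extensions are classified by a class
\[
\kappa\in \Ext^{1}_B(\LL_{B/C_1\times C_2},E^{\vee}[1])=\Ext^2_B(\LL_{B/C_1\times C_2},E^{\vee}).
\]
Since $B\subset C_1\times C_2$ is a regular embedding with conormal bundle $N^{\vee}_{B/C_1\times C_2}$, this group is $\Ext^1_B(N^{\vee}_{B/C_1\times C_2},E^{\vee})=\Ext^1_B(E,N_{B/C_1\times C_2})$. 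I would identify $N_{B/C_1\times C_2}=(TC_1|_B\oplus TC_2|_B)/TB\cong TC_1|_B+TC_2|_B$, the last isomorphism via the difference map. Under this identification the natural guess is that $\kappa$ is, up to sign, the extension class of the excess sequence (\ref{excessplit}).

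To prove this, I would compare the cotangent complexes. The triangle $\gamma^*\LL_{W/C_1\times C_2}\to\LL_{B/C_1\times C_2}\to\LL_{B/W}$ together with $\LL_{W/C_1\times C_2}\cong \tilde j^*\LL_{X/X\times X}\cong \Omega_X|_W[1]$ shows that the connecting map is built from the restriction $\Omega_X|_B\to N^{\vee}_{B/C_1\times C_2}$. That restriction is dual to the inclusion $TC_1|_B+TC_2|_B\hookrightarrow TX|_B$ with cokernel $E$, so I expect this computation to yield the identification of $\kappa$ with the class of (\ref{excessplit}).

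This identification settles both directions at the first stage. If $W\cong\Tot_B(E[-1])$ over $C_1\times C_2$, then $W_{\le1}$ is the trivial extension, so $\kappa=0$ and (\ref{excessplit}) splits. Conversely, a splitting of (\ref{excessplit}) gives $\kappa=0$, and hence a retraction $W_{\le 1}\to B$ together with $E^{\vee}[1]\to\shO_{W_{\le1}}$.

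For sufficiency I still have to lift from $W_{\le 1}$ to $W$ itself. I would not attack the higher Postnikov obstructions directly. Instead, I would use the splitting to factor the situation through the self-intersection case. The splitting produces a subbundle $E\subset TX|_B$ complementary to $TC_1|_B+TC_2|_B$. Locally, or on a tubular neighbourhood in the formal completion, this lets one write $W$ as a base change of the self-intersection $B\times_{\Tot(E)}B$ of the zero section. That self-intersection is canonically $\Tot_B(E[-1])$ because the conormal sequence of a zero section splits canonically, which is the Arinkin--C\u{a}ld\u{a}raru setting. The remaining higher obstructions then vanish by the multiplicativity observed in the first paragraph: once the degree $-1$ generator map $E^{\vee}[1]\to\shO_W$ exists over $\shO_B$, the free-cdga property of $\Sym$ extends it uniquely.

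The main obstacle is the identification of $\kappa$ with the excess class, including the verification that it is compatible with the structure over $C_1\times C_2$. A secondary delicate point is constructing the retraction $\shO_B\to\shO_W$ as a genuine cdga map rather than merely on the truncation $W_{\le 1}$. For this I would try to rely on the smoothness of $B$: since $B$ is smooth, $\LL_B$ is concentrated in degree $0$, so the obstructions to extending the retraction up the Postnikov tower land in groups $\Ext^{\ge 2}(\Omega_B,\wedge^kE^{\vee}[k-1])$. I would then argue that these obstructions are killed once the degree $-1$ piece is split.
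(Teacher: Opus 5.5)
Your necessity argument is sound, and the step you flag as the main obstacle is routine. Since $\gamma$ is an isomorphism on $\pi_0$ and $\LL_{W/C_1\times C_2}\cong\Omega_X|_W[1]$, the triangle $\gamma^*\LL_{W/C_1\times C_2}\arw\LL_{B/C_1\times C_2}\arw\LL_{B/W}$ becomes $\Omega_X|_B[1]\arw N^{\vee}_{B/C_1\times C_2}[1]\arw\LL_{B/W}$. The first map is surjective in degree $-1$ with kernel $E^{\vee}$. Hence $\LL_{B/W}\simeq E^{\vee}[2]$, and $\kappa$ is the connecting map, i.e.\ the class of the dual of (\ref{excessplit}). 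Note that the paper does not prove this theorem: it quotes \cite{arinkin2019formality} and only sketches the sufficiency direction, inside the proof of Theorem~\ref{mainthm}.

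The genuine gap is sufficiency beyond $W_{\le 1}$, for three reasons.
\begin{enumerate}
\item A splitting of (\ref{excessplit}) is first-order linear data on $B$. It gives no global (or even formal) tubular neighbourhood, and no map from a neighbourhood of $B$ in $X$ to $\Tot_B(E)$. So $W$ can be written as a base change of $B\times_{\Tot(E)}B$ only locally, where the statement is already trivial by Koszul complexes; all the content is global.
\item Your appeal to the free-cdga property of $\Sym$ presupposes that $\shO_W$ is already an $\shO_B$-algebra compatibly with $\shO_{C_1\times C_2}$. That is, it assumes the full retraction $\pi$ exists, which is exactly what is missing.
\item The obstructions to extending $\pi$ from $\tau_{\ge -(n-1)}\shO_W$ to $\tau_{\ge -n}\shO_W$ over $C_1\times C_2$ lie in $\Ext^{n+1}_B(\LL_{B/C_1\times C_2},\wedge^nE^{\vee})\cong\Hdot^n(B,N_{B/C_1\times C_2}\otimes\wedge^nE^{\vee})$, not in $\Ext^{\bullet}_B(\Omega_B,-)$. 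Switching to the absolute cotangent complex forgets the compatibility with $C_1\times C_2$. These groups are generally nonzero, and you give no mechanism for choosing lifts that kill them.
\end{enumerate}

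The missing idea is to bypass obstruction theory by writing down a global map first. Write $W\cong(C_1\times C_2)\times_{X\times X}X$. The diagonal satisfies global HKR, $X\times_{X\times X}X\cong\Tot_X(TX[-1])$ over $X\times X$ (cf.\ \cite{arinkin2012self}). Pulling back along $B\arw C_1\times C_2$, whose composite to $X\times X$ factors through $\Delta$, gives $B\times_{X\times X}X\cong\Tot_B(TX|_B[-1])$. A splitting $s\colon E\arw TX|_B$ of (\ref{excessplit}) then yields
$$\Tot_B(E[-1])\xrightarrow{\ s\ }\Tot_B(TX|_B[-1])\cong B\times_{X\times X}X\arw(C_1\times C_2)\times_{X\times X}X=W.$$
This is a globally defined morphism over $C_1\times C_2$. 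On $\shH^{-1}$ it is the composite $E^{\vee}\hookrightarrow\Omega_X|_B\xrightarrow{s^{\vee}}E^{\vee}$, which is the identity. Whether it is an isomorphism is then a local question, settled by the Koszul/multiplicativity check you already planned. This is the Arinkin--C\u{a}ld\u{a}raru(--Hablicsek) route the paper follows in Theorem~\ref{mainthm}. Combined with your first-stage obstruction argument for necessity, which the paper leaves to the reference, it gives a complete proof.
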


If $C_1\times_X C_2\cong \Tot_B(E[-1])$, then there is a left inverse $\pi: C_1\times_X C_2\arw B$ to $\gamma: B\arw C_1\times_X C_2$ induced from the bundle structure. Then (\ref{eq:bc1}) and (\ref{eq:bc2})
\begin{align*}
   & j_2^{*}j_{1*}\shO_{C_1}\cong i_{1*}\pi_*(\shO_{\Tot_B(E[-1])})=i_{1*}(\Sym^{\bullet} (E^{\vee}[1]))=\bigoplus_{k\geq 0}i_{1*}\wedge^k E^{\vee},\\ 
   & j_2^{*}j_{1*}F_1\cong i_{1*}(\pi_*\shO_{\Tot_B(E[-1])}\otimes i_2^*F_1)=i_{1*}(\Sym^{\bullet} (E^{\vee}[1])\otimes i_2^*F_1)=\bigoplus_{k\geq 0}i_{1*}(\wedge^k E^{\vee}\otimes i_2^*F_1).
\end{align*}
Hence, the spectral sequences of Proposition \ref{prop: ss} degenerate at the $E_2$ page. 
\begin{corollary}\label{ssd}
Let $C_1$ and $C_2$ be two smooth subvarieties of $X$ with a clean intersection $B$, and $E$ be the excess bundle. Assume further that the excess sequence (\ref{excessplit}) splits, then 
\begin{align*}
   & \Ext_X^{\bullet}(j_{1*}\shO_{C_1},j_{2*}\shO_{C_2})\cong\bigoplus_{p+q+m=\bullet}
\Hdot^q(B, \wedge^{p-m}E\otimes \det(N_{B/C_2})),\\
& \Ext_X^{\bullet}(j_{1*}F_1,j_{2*}F_2)\cong\bigoplus_{p+q+m=\bullet}
\Hdot^q(B, (F_1^{\vee}\otimes F_2)|_B\otimes \wedge^{p-m}E\otimes \det(N_{B/C_2}))
\end{align*}
where $m=\codim (B, C_2)$, and $F_1$(resp. $F_2$) is a perfect complex on $C_1$(resp. $C_2$).
\end{corollary}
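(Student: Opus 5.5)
Under the splitting hypothesis on the excess sequence (\ref{excessplit}), Theorem \ref{ACH} gives an isomorphism of derived schemes over $C_1\times C_2$
$$C_1\times_X C_2\cong \Tot_B(E[-1]).$$
The plan is to compute $\Ext_X^{\bullet}(j_{1*}F_1,j_{2*}F_2)$ from this and read off both displayed formulas; the first is the special case $F_1=\shO_{C_1}$, $F_2=\shO_{C_2}$, so it suffices to treat the general one. The left inverse $\pi:\Tot_B(E[-1])\to B$ is the bundle projection. Since $\pi$ is affine, $\pi_*\shO=\Sym^{\bullet}(E^{\vee}[1])=\bigoplus_{k\ge 0}\wedge^kE^{\vee}[k]$. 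Combining this with the base-change identities (\ref{eq:bc1}) and (\ref{eq:bc2}) gives
$$j_2^*j_{1*}F_1\cong \bigoplus_{k\ge0} i_{2*}\bigl(\wedge^kE^{\vee}\otimes i_1^*F_1\bigr)[k],$$
a direct sum of shifted sheaves rather than merely a complex filtered by them. Here the pushforward to $C_2$ is along $i_2$, and the paper's labels of $i_1,i_2$ should be matched to the diagram accordingly.

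Next apply the adjunction (\ref{ext'}). Since $\Ext_{C_2}(-,F_2)$ turns direct sums into products, and the sum is finite because $\wedge^kE^{\vee}=0$ for $k>\operatorname{rk}E$, it splits as a direct sum. Each summand is
$$\Ext_{C_2}^{\bullet}\bigl(i_{2*}(\wedge^kE^{\vee}\otimes i_1^*F_1)[k],F_2\bigr).$$
Grothendieck duality for the closed embedding $i_2:B\hookrightarrow C_2$ of codimension $m$ gives $i_2^!F_2=i_2^*F_2\otimes\det(N_{B/C_2})[-m]$. Hence each summand equals
$$\Ext_B^{\bullet-k-m}\bigl(\wedge^kE^{\vee}\otimes i_1^*F_1,\ i_2^*F_2\otimes\det N_{B/C_2}\bigr)=\Hdot^{\bullet-k-m}\bigl(B,(F_1^{\vee}\otimes F_2)|_B\otimes\wedge^kE\otimes\det N_{B/C_2}\bigr),$$
using that $E$ is locally free, so $(\wedge^kE^{\vee})^{\vee}=\wedge^kE$. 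Setting $k=p-m$ and summing over $p$ and $q$ recovers the stated formula, up to the paper's indexing convention for the total degree. Equivalently, the spectral sequence of Proposition \ref{prop: ss} is the one attached to the filtration by $k$, and it degenerates because that filtration splits.

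The one point requiring care is that the isomorphism of Theorem \ref{ACH} is \emph{over} $C_1\times C_2$. This compatibility is what ensures that $\tilde{j_1}$ and $\tilde{j_2}$ factor through $\pi$ as $i_1\circ\pi$ and $i_2\circ\pi$, which justifies (\ref{eq:bc1}) and (\ref{eq:bc2}). It is also what makes the decomposition of $j_2^*j_{1*}F_1$ a decomposition of $\shO_{C_2}$-modules rather than merely of sheaves on $B$. Once that is in place, the remaining steps are formal.
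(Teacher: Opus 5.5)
Your proposal is correct and follows the paper's argument. Theorem \ref{ACH} supplies the retraction $\pi$, so $j_2^*j_{1*}F_1$ splits as $\bigoplus_k i_{2*}(\wedge^kE^{\vee}\otimes i_1^*F_1)[k]$ and the spectral sequence of Proposition \ref{prop: ss} degenerates; you make this explicit via adjunction and Grothendieck duality for $i_2$. Your fixes of the $i_1/i_2$ labels and the missing shifts $[k]$ are right, and so is your degree count $\bullet=q+k+m$ (i.e.\ $p+q=\bullet$ with $k=p-m$, matching Proposition \ref{prop: ss}): the printed constraint $p+q+m=\bullet$ counts $m$ twice, as the case $C_1=\{0\}$, $C_2=X=\CC^n$ shows.
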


In particular, for the case $C_1=C_2=C\subset X$, if the following normal sequence splits
\begin{align}\label{normalsplit}
  0\arw TC\arw TX|_C\arw N_{C/X}\arw 0,  
\end{align}
then $C\times_{X}C\cong \Tot_C(N_{C/X}[-1])$ and 
$$\Ext^{\bullet}_X(\shO_C, \shO_C)\cong \bigoplus_{p+q=\bullet}\Hdot^q(C, \wedge^p N_{C/X}).$$

\section{Complex Lagrangian Intersections}\label{sec: lagint}
We apply the results in the previous section to the setting of complex Lagrangian intersections. 

Let $(X, \omega)$ be a smooth quasi-projective symplectic variety, i.e., a smooth quasi-projective variety with a closed holomorphic 2-form $\omega\in \Hdot^0(X, \Omega^2)\cong\Hom(\shO_X, \wedge^2 \Omega_X)$ such that the contraction by $\omega$ gives an isomorphism $T_X\cong \Omega_X$. Let $j_{\alpha}:\, C_{\alpha}\hookrightarrow X\,(\alpha=1,2)$ be two smooth (complex) Lagrangians, which means that the previous contraction map induces an isomorphism $N_{C_{\alpha}/X}\cong \Omega_{C_{\alpha}}$.

\subsection{Excess Bundle for Lagrangian Intersections}
\begin{lemma}\label{exc}
    If $C_1$ and $C_2$ are two smooth Lagrangians of $X$ with a clean intersection $B$, then the excess bundle is isomorphic to the cotangent bundle of $B$: $$E\cong  \Omega_{B}.$$
\end{lemma}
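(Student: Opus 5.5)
The natural route is to identify $E$ with $\Omega_B$ by dualizing with the symplectic form, working pointwise on $B$ but with constructions that are clearly linear-algebraic and globalize to vector bundles. Since $\omega$ identifies $TX|_B$ with $\Omega_X|_B = (TX|_B)^{\vee}$, the dual $E^{\vee}$ is the annihilator of $TC_1|_B+TC_2|_B$ in $\Omega_X|_B$. Under $\omega$, this annihilator corresponds to the $\omega$-orthogonal complement
$$(TC_1|_B+TC_2|_B)^{\perp_\omega}=TC_1|_B^{\perp_\omega}\cap TC_2|_B^{\perp_\omega}.$$
Because each $C_\alpha$ is Lagrangian, $TC_\alpha^{\perp_\omega}=TC_\alpha$. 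Hence the complement is $TC_1|_B\cap TC_2|_B$.

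Cleanness, meaning that the scheme-theoretic intersection $B$ is smooth, gives $TB=TC_1|_B\cap TC_2|_B$ as subbundles of $TX|_B$. Concretely, the Zariski tangent space of the scheme $C_1\cap C_2$ at a point is the intersection of the tangent spaces, and smoothness makes these fibers a vector bundle, namely $TB$. Combining these steps gives $E^{\vee}\cong TB$, and dualizing yields $E\cong \Omega_B$.

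To present this intrinsically, I would write down the map $TX|_B\to \Omega_B$ given by $v\mapsto \iota_v\omega|_{TB}$. This map kills $TC_1|_B+TC_2|_B$, since each $C_\alpha\supset B$ is isotropic. It is surjective because $\omega$ is nondegenerate and $TB\subset TX|_B$ is a subbundle. Its kernel is exactly $TB^{\perp_\omega}=(TC_1\cap TC_2)^{\perp_\omega}=TC_1+TC_2$, using that taking $\perp_\omega$ twice is the identity on subbundles. The map therefore descends to an isomorphism $E\to\Omega_B$.

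The only delicate point is that $TC_1|_B+TC_2|_B$ is a genuine subbundle, i.e. has constant rank. This follows from $\dim(T_xC_1+T_xC_2)=2\dim C-\dim T_xB$, where $\dim C$ is the common dimension $\frac12\dim X$ and $\dim T_xB=\dim B$ is constant on each component by smoothness. As a sanity check, Lemma \ref{excess} then reads $0\to N_{B/C_2}\to \Omega_{C_1}|_B\to\Omega_B\to 0$, which is consistent with $N_{C_1/X}\cong\Omega_{C_1}$.
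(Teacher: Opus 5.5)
Your proposal is correct and is essentially the paper's argument: the paper also works fiberwise with linear Lagrangians $L_1,L_2$, uses the map $v\mapsto \omega(v,-)|_{L_1\cap L_2}$ (your $v\mapsto \iota_v\omega|_{TB}$), and observes that it is surjective with kernel $L_1+L_2$, so $E\cong (L_1\cap L_2)^{\vee}$. Where the paper only checks that $L_1+L_2$ lies in the kernel, your identity $(TC_1\cap TC_2)^{\perp_\omega}=TC_1+TC_2$ gives the reverse inclusion, and your constant-rank remark and the identification $TB=TC_1|_B\cap TC_2|_B$ supply the globalization that the paper leaves implicit.
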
 
\begin{proof}
It depends on the following local calculations. 
    Let $L_1$ and $L_2$ be two linear Lagrangian subspaces of a symplectic vector space $(V,\omega)$. Let $E$ be the excess space of $L_1\cap L_2$, \emph{i.e.} $E\cong V/(L_1+L_2)$.
    Define a linear map 
    $$\varphi: V\arw (L_1\cap L_2)^{\vee}$$ 
    by $\varphi(v)=\omega(v,-)\in (L_1\cap L_2)^{\vee}$, which is clearly surjective. Then the linear functional $\omega(v,-)$ vanishes for every $w\in L_1\cap L_2$ as long as $v\in L_1+L_2$ by the Lagrangian conditions, which means $\ker(\varphi)\cong L_1+L_2$, and hence $E\cong (L_1+L_2)^{\vee}$.
\end{proof}
The following is observed by Mladenov in \cite{mladenov2024formality}. 
\begin{lemma}\label{lem: canlag}
Let $C_1$ and $C_2$ be two smooth Lagrangians of $X$ with a clean intersection $B$. Then
\begin{align*}
    (K_{C_1}^{\vee}\otimes K_{C_2})|_B\otimes \det(N_{B/C_2})^{\otimes2}\cong \shO_B.
\end{align*}
    where $K_{C_i}$ is the canonical bundle on $C_i$ ($i=1, 2$).
\end{lemma}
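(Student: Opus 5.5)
The plan is to compute determinants along the exact sequences that relate the bundles involved, and to use the Lagrangian identifications $N_{C_\alpha/X}\cong\Omega_{C_\alpha}$ together with $E\cong\Omega_B$ from Lemma \ref{exc}. Write $n=\dim C_1=\dim C_2$, so that $\dim X=2n$.

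First I apply Lemma \ref{excess} to the clean intersection $B=C_1\cap C_2$:
$$0\arw N_{B/C_2}\arw N_{C_1/X}|_B\arw E\arw 0.$$
Taking determinants gives
$$\det(N_{C_1/X}|_B)\cong \det(N_{B/C_2})\otimes \det E.$$
The Lagrangian condition identifies $N_{C_1/X}\cong\Omega_{C_1}$, so $\det(N_{C_1/X})|_B\cong K_{C_1}|_B$. Lemma \ref{exc} gives $E\cong\Omega_B$, so $\det E\cong K_B$. Combining these,
$$K_{C_1}|_B\cong \det(N_{B/C_2})\otimes K_B.$$

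Second, the normal sequence $0\arw TB\arw TC_2|_B\arw N_{B/C_2}\arw 0$ gives, after taking determinants and dualizing,
$$K_{C_2}|_B\cong K_B\otimes \det(N_{B/C_2})^{\vee}.$$

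Third, I eliminate $K_B$. Dividing the second isomorphism by the first yields
$$K_{C_1}^{\vee}|_B\otimes K_{C_2}|_B\cong \det(N_{B/C_2})^{\vee}\otimes\det(N_{B/C_2})^{\vee}.$$
Tensoring both sides with $\det(N_{B/C_2})^{\otimes 2}$ gives the claimed triviality
$$(K_{C_1}^{\vee}\otimes K_{C_2})|_B\otimes \det(N_{B/C_2})^{\otimes2}\cong \shO_B.$$

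The only step needing care is the identification $E\cong\Omega_B$ as vector bundles on $B$, as opposed to a fiberwise statement. Lemma \ref{exc} is proved pointwise, but the map $\varphi$ is induced by the global contraction $TX|_B\arw \Omega_X|_B\arw\Omega_B$ with $\omega$. This map is surjective and its kernel is $TC_1|_B+TC_2|_B$, which has constant rank because $E$ does, so the identification globalizes. Likewise, the Lagrangian isomorphism $N_{C_1/X}\cong\Omega_{C_1}$ is the global map induced by $\omega$. No splitting hypothesis is needed, since determinants are multiplicative on any short exact sequence. The same argument works equivariantly, because all the maps used are $G$-equivariant when $\omega$ is $G$-invariant.
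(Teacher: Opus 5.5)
Your proposal is correct and follows essentially the same route as the paper. Like the paper, you take determinants of the excess sequence $0\to N_{B/C_2}\to N_{C_1/X}|_B\to E\to 0$ using $N_{C_1/X}\cong\Omega_{C_1}$ and $E\cong\Omega_B$, then take determinants of the normal sequence of $B\subset C_2$, and eliminate $K_B$; your added remark that $E\cong\Omega_B$ holds as vector bundles, because $\omega$ induces it globally, correctly fills in a point the paper leaves implicit.
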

\begin{proof}
    By Lemma \ref{excess} and \ref{exc}, there is short exact sequence:
    \begin{align*}
        0\arw N_{B/C_2} \arw N_{C_1/X}|_B\arw \Omega_B\arw 0.
    \end{align*}
    Note that $N_{C_1/X}\cong \Omega_{C_1}$, and then
    \begin{align}\label{eq: can1}
        K_{C_1}|_B\cong K_B\otimes \det(N_{B/C_2}).
    \end{align}
    The determinant of the conormal sequence of $B\subset C_2$ gives
    \begin{align}\label{eq: can2}
        K_{C_2}\cong K_B\otimes \det(N_{B/C_2}^{\vee}).
    \end{align}
    Thus $(K_{C_1}^{\vee}\otimes K_{C_2})|_B\otimes \det(N_{B/C_2})^{\otimes2}\cong \shO_B$ holds by (\ref{eq: can1}) and (\ref{eq: can2}).
\end{proof}

% {\begin{proof} 
% This is a local statement, and we may assume that there is another smooth subvariety $Y\subset X$ such that $C_2\subset Y$, $Y\cap C_1=C_1\cap C_2:=M_{12}$, and $\dim C_1+\dim Y= \dim X-\dim M_{12}$. This can be done by induction on $\codim i_2$.  

% \begin{equation*}
% \begin{tikzcd}
% C_1\cap C_2 \arrow[r,"i_1"] \arrow[d,"i_2"]
% & C_2 \arrow[d,hook', "j_2"] \arrow[r, hook,"i"] &Y \arrow[ld, hook',swap,"j"]\\
% C_1 \arrow[r,hook, swap,"j_1"] 
% & X &.
% \end{tikzcd}
% \end{equation*} 
% By the commutative diagram and the base change formula, we have the following
% $$j_2^{\vee}j_{1*}\shO_{C_1}=i^{\vee}j^{\vee}j_{1*}\shO_{C_1}=i^{\vee}i_*i_{1*}i_2^{\vee}\shO_{C_1}=i^{\vee}i_*i_{1*}\shO_{M_{12}}.$$
% Since $i_*$ is exact, 
% $$i_*\sh\Hdot^{-k}(j_2^{\vee}j_{1*}\shO_{C_1})=\sh\Hdot^{-k}(i_*j_2^{\vee}j_{1*}\shO_{C_1})=\sh\Hdot^{-k}(i_*i^{\vee}i_*i_{1*}\shO_{M_{12}})=Tor_k^Y(\shO_{M_{12}}, \shO_{C_2}).$$
% Write the composition $i\circ i_1$ as $i_{12}$.  
% The normal sequence for the triple $(M_{12}, C_2, Y)$
% $$ 0\arw N_{i_1}\arw N_{i_{12}}\arw N_i|_{M_{12}}\arw 0$$
% implies that $$N_i|_{M_{12}}\cong E. $$
% By projection formula
% $$i_*\shO_{C_2}\overset{\LL}{\otimes} i_{12*} \shO_{M_{12}}=i_{12*}i_1^{\vee}i^{\vee}i_{*}\shO_{M_{2}}=i_{12*}i_1^{\vee}\bigwedge N_i^{\vee}=i_{12*}\bigwedge E^{\vee}$$
% Hence, 
% $$\sh\Hdot^{-k}(j_2^{\vee}j_{1*}\shO_{C_1})\cong \bigwedge^k i_{1*}E^{\vee}$$ holds as required. 
% \end{proof}}
%
%

%
%
% \begin{definition}
% A dga $(K,d)$ is called \emph{formal} if it is quasi-isomorphic to $\bigoplus_{k}^kH ^k(K,d)[-k]$. 
% \end{definition}

%
\subsection{Lagrangian Intersections and Ext Groups}
The excess sequence (\ref{excessplit}) becomes 
$$0\arw TC_1|_B+TC_2|_B\arw TX|_B\arw \Omega_B\arw 0,$$
which is also equivalent to the conormal sequence of $B\subset X$:
\begin{equation}\label{normalseq}
    0\arw N_{B/X}^{\vee}\arw \Omega_X|_B\arw \Omega_B\arw0.
\end{equation}
Based on Theorem \ref{ACH} and Corollary \ref{ssd} , we have:
\begin{proposition}\label{prop: ssd}
If $(C_1, C_2)$ is a pair of smooth Lagrangians in $X$ with a clean intersection $B$, and the conormal sequence (\ref{normalseq}) of $B\subset X$ splits, then 
\begin{align}\label{eq:Hodge=Ext}
    & \Ext^{\bullet}(j_{1*}\shO_{C_1},j_{2*}\shO_{C_2})\cong\bigoplus_{p+q+m=\bullet}
\Hdot^q(B, \Omega_{B}^{p-m}\otimes det(N_{B/C_2})),\\
    & \Ext_X^{\bullet}(j_{1*}F_1,j_{2*}F_2)\cong\bigoplus_{p+q+m=\bullet}
\Hdot^q(B,  \Omega_{B}^{p-m}\otimes (F_1^{\vee}\otimes F_2)|_B\otimes det(N_{B/C_2}))\label{eq:Hodge=Ext2}
\end{align}
where $m=\codim(B, C_2)$, and $F_1$(resp. $F_2$) is a perfect complex on $C_1$(resp. $C_2$).
\end{proposition}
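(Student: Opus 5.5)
This is a direct specialization of Corollary \ref{ssd} to the Lagrangian setting, so the plan is to check its hypotheses and then substitute the Lagrangian identifications of the excess bundle. The steps are as follows.

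First, identify the excess bundle. By Lemma \ref{exc}, the excess bundle of the clean intersection of $C_1$ and $C_2$ is $E\cong\Omega_B$. Consequently $\wedge^{p-m}E\cong\Omega_B^{p-m}$, and substituting this into both formulas of Corollary \ref{ssd} produces exactly (\ref{eq:Hodge=Ext}) and (\ref{eq:Hodge=Ext2}). The factor $\det(N_{B/C_2})$ and the shift $m=\codim(B,C_2)$ carry over unchanged.

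Second, and this is the only real work, check that the splitting hypothesis of Corollary \ref{ssd} holds. We need the excess sequence (\ref{excessplit}) to split, given that the conormal sequence (\ref{normalseq}) splits. I would make the equivalence stated before the proposition precise as follows.

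1. The symplectic form gives an isomorphism $TX|_B\cong\Omega_X|_B$.
2. This isomorphism carries $TC_1|_B+TC_2|_B$ onto the annihilator of $TB$. This is the pointwise linear algebra in the proof of Lemma \ref{exc}: the kernel of $\varphi$ is $L_1+L_2$, so $\omega$-contraction sends $L_1+L_2$ isomorphically onto $(L_1\cap L_2)^{\perp}$.
3. Since $B$ is a clean intersection, $TB=TC_1|_B\cap TC_2|_B$ fiberwise. Hence the annihilator of $TB$ is the subbundle $N^{\vee}_{B/X}\subset\Omega_X|_B$.

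Together these give an isomorphism of short exact sequences, from $0\arw TC_1|_B+TC_2|_B\arw TX|_B\arw E\arw 0$ to (\ref{normalseq}), whose right vertical map is the isomorphism $E\cong\Omega_B$ of Lemma \ref{exc}. The key point to verify is that this map is compatible with the restriction $\Omega_X|_B\arw\Omega_B$. For $v\in T_xX$, the image of $v$ in $\Omega_B$ is $\omega(v,-)|_{T_xB}$, which is precisely $\varphi(v)$. So the diagram commutes, and a splitting of one sequence transports to a splitting of the other.

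Third, apply Theorem \ref{ACH} and Corollary \ref{ssd}. With the splitting established, Theorem \ref{ACH} gives $C_1\times_X C_2\cong\Tot_B(\Omega_B[-1])$, and Corollary \ref{ssd} then gives the two stated isomorphisms after the substitution from the first step.

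The main obstacle is the second step: making sure the identification is an isomorphism of extensions, with compatible maps to $\Omega_B$, rather than merely an abstract isomorphism of the bundles involved. Once that compatibility is in place, the rest is bookkeeping of indices inherited from Corollary \ref{ssd}.
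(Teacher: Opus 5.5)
Your argument is the paper's own. The paper just remarks that, via $\omega$ and Lemma \ref{exc}, the excess sequence (\ref{excessplit}) is equivalent to the conormal sequence (\ref{normalseq}), and then invokes Theorem \ref{ACH} and Corollary \ref{ssd}; your check that $\omega$-contraction gives an isomorphism of extensions compatible with $\Omega_X|_B\arw\Omega_B$ (using $TB=TC_1|_B\cap TC_2|_B$ and $\ker\varphi=L_1+L_2$) is exactly the detail the paper leaves implicit.

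Do not carry the index bookkeeping over unchecked, though. Since $i_2^{!}\shO_{C_2}\cong\det(N_{B/C_2})[-m]$, the summand $i_{2*}(\wedge^kE^{\vee}[k])$ of $j_2^*j_{1*}\shO_{C_1}$ contributes $\Hdot^{\bullet-k-m}(B,\wedge^kE\otimes\det(N_{B/C_2}))$. The sum should therefore run over $p+q+m=\bullet$ with $\Omega_B^{p}$ (equivalently $p+q=\bullet$ with $\Omega_B^{p-m}$). As printed here and in Corollary \ref{ssd}, the shift by $m$ is applied twice: for the transverse pair $\{y=0\},\{x=0\}\subset\CC^2$ one gets $\Ext^1=\CC$, not $\Ext^2$. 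The corrected form is also what Proposition \ref{prop: Ext} actually uses.
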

As mentioned in the introduction, there exists cases that $\det(N_{B/C_2})$ or $(F_1^{\vee}\otimes F_2)|_B\otimes \det(N_{B/C_2})$ is torsion so that we can deduce that RHS of (\ref{eq:Hodge=Ext}) or (\ref{eq:Hodge=Ext2}) calculates Hodge cohomology of $B$ in local coefficients. 
\subsubsection{2-torsion local system from spin Lagrangian intersections} Recall that a K\"ahler manifold is spin if and only if its canonical bundle admits square roots. Now suppose that both the Lagrangians $C_1$ and $C_2$ of $X$ are spin, and the square roots of the canonical bundles $K_{C_1}$ of $C_1$ and $K_{C_2}$ of $C_2$ are denoted as $K_{C_1}^{1/2}$ and $K_{C_2}^{1/2}$.
Then Lemma \ref{lem: canlag} gives $(K_{C_1}^{\vee}\otimes K_{C_2})|_B\otimes \det(N_{B/C_2})^{\otimes2}\cong \shO_B$, which implies that $(K_{C_1}^{-1/2}\otimes K_{C_2}^{1/2})|_B\otimes \det(N_{B/C_2})$ determines a 2-torsion $\CC$-local system $\mathfrak{E}$ on $B$.

\subsubsection{Covering trick}
If we assume that $\det(N_{B/C_2})^{\otimes n}\cong \shO_B$, and thus $\det(N_{B/C_2})$ determines a n-torsion rank one $\CC$-local system $\mathfrak{L}:=\det(N_{B/C_2})_{\delta}$ on $B$, then RHS of (\ref{eq:Hodge=Ext}) calculates the $\mathfrak{L}$-twisted Hodge cohomology of $B$. 
% If $\mathfrak{L}$ is trivial, then by Hodge theory,  (\ref{eq:Hodge=Ext}) is isomorphic to $\Hdot^{\bullet-m}(B, \CC)$. If $\mathfrak{L}$ is non-trivial, then there is a double covering $p: \widetilde{B}\arw B$ such that $p_{*}\CC\cong \CC\oplus \mathfrak{L}$. Then by adjointness of $p_*$ and $p^{-1}$
% \begin{align*}
%     \Hdot^{\bullet}(\widetilde{B}, \CC)\cong \Hdot^{\bullet}(B, p_*\CC)=\Hdot^{\bullet}(B, \CC)\oplus \Hdot^{\bullet}(B, \mathfrak{L}).
% \end{align*}
% Similarly we have $p_{*}\shO_{\widetilde{B}}\cong \shO_B\oplus \mathfrak{L}$, and $p^*\Omega_B\cong \Omega_{\widetilde{B}}$. Then 
% \begin{align*}
%     \Hdot^{\bullet}(\widetilde{B}, \Omega^i_{\widetilde{B}})\cong \Hdot^{\bullet}(\widetilde{B}, p^*{\Omega_B})=\Hdot^{\bullet}(B, p_*\shO_{\widetilde{B}}\otimes\Omega^i_B)\cong \Hdot^{\bullet}(B, \Omega^i_B)\oplus \Hdot^{\bullet}(B, \Omega^i_B\otimes\mathfrak{L}).
% \end{align*}
% Together with Hodge theory on $B$ and $\widetilde{B}$, we obtain
% $$\bigoplus_{p+q=\bullet}
% \Hdot^q(B, \Omega_{B}^{p}\otimes \mathfrak{L})\cong \Hdot^{\bullet}(B, \mathfrak{L}).$$
By the covering trick in Chapter 1 of \cite{barth2003compact}, there is a $n$-cyclic covering 
$c: \widetilde{B}\arw B$ such that $$c_{*}\CC\cong \bigoplus_{k=0}^{n-1}\mathfrak{L}^{\otimes k}.$$ Then by adjointness of $c_*$ and $c^{-1}$
\begin{align}\label{eq: coveringcoh}
    \Hdot^{\bullet}(\widetilde{B}, \CC)\cong \Hdot^{\bullet}(B, c_*\CC)=\bigoplus_{k=0}^{n-1}\Hdot^{\bullet}(B, \mathfrak{L}^{\otimes k}).
\end{align}
Similarly, we have $c_{*}\shO_{\widetilde{B}}\cong \bigoplus_{k=0}^{n-1}\mathfrak{L}^{\otimes k}\otimes \shO_B$, and $c^*\Omega_B\cong \Omega_{\widetilde{B}}$. Then 
\begin{align}\label{eq: coveringhodge}
    \Hdot^{\bullet}(\widetilde{B}, \Omega^i_{\widetilde{B}})\cong \Hdot^{\bullet}(\widetilde{B}, c^*{\Omega_B})=\Hdot^{\bullet}(B, c_*\shO_{\widetilde{B}}\otimes\Omega^i_B)\cong \bigoplus_{k=0}^{n-1}\Hdot^{\bullet}(B, \Omega^i\otimes\mathfrak{L}^{\otimes k}).
\end{align}
There is a twisted Hodge-de Rham spectral sequence
\begin{align}\label{eq: twistedhodgederham}
    E_2^{p, q}=\Hdot^{q}(B, \Omega^p\otimes\mathfrak{L}^{\otimes k})\Longrightarrow \Hdot^{p+q}(B, \mathfrak{L}^{\otimes k}) \quad \text{for each }k, 
\end{align}
which implies an inequality
\begin{align}\label{ineq: hodge-derham}
    \dim \Hdot^{\bullet}(B, \mathfrak{L}^{\otimes k})\leq \sum_{p+q=\bullet}\dim \Hdot^{q}(B, \Omega^p\otimes\mathfrak{L}^{\otimes k}).
\end{align}
If $B$ is proper and thus $\tilde{B}$ is also proper, then Hodge theory on $\widetilde{B}$ implies that 
\begin{align}\label{eq: dimhodgederham}
    \dim \Hdot^{\bullet}(\widetilde{B}, \CC)=\sum_{p+q=\bullet} \dim \Hdot^{q}(\widetilde{B}, \Omega^p_{\widetilde{B}}).
\end{align}
Now the combination of the equalities (\ref{eq: coveringcoh}), (\ref{eq: coveringhodge}) and (\ref{eq: dimhodgederham}) forces (\ref{ineq: hodge-derham}) to be an equality for each $k$, and hence the spectral sequence (\ref{eq: twistedhodgederham}) degenerates at $E_2$ page, which implies that 
$$\bigoplus_{p+q=\bullet}
\Hdot^q(B, \Omega_{B}^{p}\otimes \mathfrak{L})\cong \Hdot^{\bullet}(B, \mathfrak{L}).$$
So we have:
\begin{proposition}\label{prop: Ext}
\begin{enumerate} 
\item Let $(C_1, C_2)$ be a pair of Lagrangians of $X$ as in Proposition \ref{prop: ssd}. If $B$ is proper and $\det(N_{B/C_2})$ is torsion, then
$$\Ext_X^{\bullet}(\shO_{C_1},\shO_{C_2})\cong \Hdot^{\bullet-m}(B, \det(N_{B/C_2})_{\delta}),$$ where $m=\codim(B, C_2)$.
\item As a special case, if $C\subset X$ is a proper Lagrangian such that the conormal sequence of $C\subset X$ splits, then $B=C_1=C_2=C$, $\det(N_{B/C_2})_{\delta})\cong\CC$, and 
$$ \Ext^{\bullet}_X(\shO_C, \shO_C)\cong \Hdot^{\bullet}(C,\CC).$$
\end{enumerate}
\end{proposition}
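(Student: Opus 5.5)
The plan is to combine Proposition \ref{prop: ssd} with the covering-trick degeneration argument given just before the statement.

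For part (1), Proposition \ref{prop: ssd} applies because the conormal sequence (\ref{normalseq}) splits. It gives
\begin{align*}
\Ext_X^{\bullet}(\shO_{C_1},\shO_{C_2})\cong\bigoplus_{p+q+m=\bullet}\Hdot^q(B,\Omega_B^{p-m}\otimes\det(N_{B/C_2})).
\end{align*}
Reindex by setting $p'=p-m$. The right-hand side becomes $\bigoplus_{p'+q=\bullet-m}\Hdot^q(B,\Omega_B^{p'}\otimes\mathfrak{L})$, where $\mathfrak{L}=\det(N_{B/C_2})_{\delta}$. Here the holomorphic line bundle $\det(N_{B/C_2})$ is identified with $\mathfrak{L}\otimes_{\CC}\shO_B$. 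This identification uses the flat structure coming from a trivialization of the $n$-th power, which is the step defining $\mathfrak{L}$.

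It then remains to identify twisted Hodge cohomology with cohomology in the local system. I would apply the covering trick: take the $n$-cyclic cover $c:\widetilde{B}\arw B$, which is proper and smooth (étale) because $B$ is. The decompositions (\ref{eq: coveringcoh}) and (\ref{eq: coveringhodge}) are compatible with the $\mu_n$-eigenspace splitting. Hodge theory on $\widetilde{B}$, namely (\ref{eq: dimhodgederham}), combined with the term-by-term inequality (\ref{ineq: hodge-derham}), forces every inequality to be an equality. Hence each twisted Hodge-to-de Rham spectral sequence (\ref{eq: twistedhodgederham}) degenerates, and in particular the one for $k=1$ does. This gives $\bigoplus_{p'+q=\bullet-m}\Hdot^q(B,\Omega^{p'}_B\otimes\mathfrak{L})\cong \Hdot^{\bullet-m}(B,\mathfrak{L})$, since over $\CC$ equal dimension already yields a (noncanonical) isomorphism of graded vector spaces.

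The main obstacle I anticipate is justifying the twisted spectral sequence (\ref{eq: twistedhodgederham}) itself. This amounts to checking that $\Omega^\bullet_B\otimes\mathfrak{L}$, with the flat connection induced from $\mathfrak{L}$, is a resolution of $\mathfrak{L}$ by the holomorphic Poincaré lemma. One also has to verify that $\widetilde{B}$ is smooth and proper, so that (\ref{eq: dimhodgederham}) holds. Both points are standard: the cover is étale because the torsion bundle yields an unramified cyclic cover.

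For part (2), take $C_1=C_2=C$, so that $B=C$ and $m=0$. In this case $N_{B/C_2}=0$, so $\det(N_{B/C_2})=\shO_C$ is trivially torsion and $\mathfrak{L}=\CC$. Part (1) then gives $\Ext^\bullet_X(\shO_C,\shO_C)\cong\Hdot^\bullet(C,\CC)$. Here the ordinary Hodge decomposition suffices and no cover is needed.
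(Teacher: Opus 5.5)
Your proposal is correct and is essentially the paper's own argument: Proposition \ref{prop: ssd}, then the cyclic-cover dimension count combining (\ref{eq: coveringcoh}), (\ref{eq: coveringhodge}), (\ref{eq: dimhodgederham}) and (\ref{ineq: hodge-derham}) to force degeneration of (\ref{eq: twistedhodgederham}) at $k=1$, with part (2) as the special case $B=C$, $m=0$, $\det(N_{B/C_2})=\shO_C$. One bookkeeping point: substituting $p'=p-m$ literally into the display of Proposition \ref{prop: ssd}, which contains both $p+q+m=\bullet$ and $\Omega_B^{p-m}$, gives $p'+q=\bullet-2m$, so your shift $\bullet-m$ is right only because that display counts $m$ twice; the correct indexing, which comes from $i_2^{!}\shO_{C_2}\cong\det(N_{B/C_2})[-m]$, is $\bigoplus_{p+q=\bullet-m}\Hdot^q(B,\Omega_B^{p}\otimes\det(N_{B/C_2}))$, and that is the expression you go on to use.
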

The following is a weaker version of Theorem 0.1.9 in \cite{mladenov2024formality}. 
\begin{proposition}\label{prop: spinlag}
    Let $(C_1, C_2)$ be a pair of Lagrangians of $X$ as in Proposition \ref{prop: ssd}. If both $C_1$ and $C_2$ are spin and $B$ is proper, then 
$$ \Ext^{\bullet}_X(K_{C_1}^{1/2}, K_2^{1/2})\cong \Hdot^{\bullet}(B,(K_{C_1}^{-1/2}|_B\otimes K_{C_2}^{1/2}|_B\otimes \det(N_{B/C_2}))_{\delta}).$$
\end{proposition}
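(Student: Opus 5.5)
We would deduce Proposition \ref{prop: spinlag} from the second formula (\ref{eq:Hodge=Ext2}) of Proposition \ref{prop: ssd}, taking $F_1=K_{C_1}^{1/2}$ and $F_2=K_{C_2}^{1/2}$. These are line bundles, hence perfect complexes, and $\Ext_X^{\bullet}(K_{C_1}^{1/2},K_{C_2}^{1/2})$ is understood as $\Ext_X^{\bullet}(j_{1*}K_{C_1}^{1/2},j_{2*}K_{C_2}^{1/2})$. Since the conormal sequence of $B\subset X$ splits, we obtain
\[
\Ext_X^{\bullet}(j_{1*}K_{C_1}^{1/2},j_{2*}K_{C_2}^{1/2})\cong\bigoplus_{p+q+m=\bullet}\Hdot^q\bigl(B,\Omega_B^{p-m}\otimes \mathcal{M}\bigr),
\]
where
\[
\mathcal{M}:=(K_{C_1}^{-1/2}\otimes K_{C_2}^{1/2})|_B\otimes\det(N_{B/C_2}),
\]
because $(F_1^{\vee}\otimes F_2)|_B=(K_{C_1}^{-1/2}\otimes K_{C_2}^{1/2})|_B$.

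The next step is to show that $\mathcal{M}$ is torsion. Squaring gives $\mathcal{M}^{\otimes 2}\cong (K_{C_1}^{\vee}\otimes K_{C_2})|_B\otimes\det(N_{B/C_2})^{\otimes 2}$, and this is $\shO_B$ by Lemma \ref{lem: canlag}. Hence $\mathcal{M}$ is $2$-torsion and determines the rank one local system $\mathfrak{E}=\mathcal{M}_{\delta}$. One point needs checking: the holomorphic line bundle attached to $\mathfrak{E}$ should recover $\mathcal{M}$ together with its flat structure. This holds because a trivialization $\mathcal{M}^{\otimes 2}\cong\shO_B$ fixes a flat connection with $\pm1$ monodromy.

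We then run the covering trick of the preceding subsection with $n=2$ and $\mathfrak{L}=\mathfrak{E}$. Take the double cover $c:\widetilde B\to B$ with $c_*\CC\cong\CC\oplus\mathfrak{E}$ and $c_*\shO_{\widetilde B}\cong\shO_B\oplus\mathcal{M}$. Since $B$ is proper, $\widetilde B$ is proper, so comparing dimensions through (\ref{eq: coveringcoh}), (\ref{eq: coveringhodge}) and (\ref{eq: dimhodgederham}) forces the twisted Hodge-to-de Rham spectral sequence (\ref{eq: twistedhodgederham}) to degenerate. This gives
\[
\bigoplus_{p+q=\bullet}\Hdot^q(B,\Omega_B^p\otimes\mathcal{M})\cong\Hdot^{\bullet}(B,\mathfrak{E}).
\]
Substituting this into the formula above yields the statement. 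There is a degree shift by $m=\codim(B,C_2)$, exactly as in Proposition \ref{prop: Ext}(1); the displayed statement appears to suppress it, so we would record it as $\Hdot^{\bullet-m}$.

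The main obstacle is the degeneration step. Equation (\ref{eq: dimhodgederham}) for $\widetilde B$ requires $\widetilde B$ to be a compact Kähler manifold, or at least a smooth proper variety, so that Deligne's Hodge-to-de Rham degeneration applies. We therefore need $\widetilde B$ to be smooth, which holds because $c$ is étale: it is an unramified cyclic cover defined by a torsion line bundle. The identification $c^*\Omega_B\cong\Omega_{\widetilde B}$ also uses this étaleness.
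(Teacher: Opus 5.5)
Your proposal is correct and follows essentially the same route the paper takes: apply (\ref{eq:Hodge=Ext2}) with $F_i=K_{C_i}^{1/2}$, use Lemma \ref{lem: canlag} to see that $(K_{C_1}^{-1/2}\otimes K_{C_2}^{1/2})|_B\otimes\det(N_{B/C_2})$ is $2$-torsion, and conclude with the covering-trick degeneration of the twisted Hodge-to-de Rham spectral sequence. You are also right that the displayed statement drops the shift by $m=\codim(B,C_2)$; the shift does appear in Proposition \ref{prop: Ext}(1) and Theorem \ref{thm: can}.
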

% In particular for the case admissible and proper $C_1=C_2=C\subset X$,
% then
% $$\Ext^{\bullet}_X(\shO_C, \shO_C)\cong \bigoplus_{p+q=\bullet}\Hdot^q(C, \Omega_M^p).$$ If we further assume that $C$ is proper, then by Hodge theory, $$\bigoplus_{p+q=\bullet}\Hdot^q(C, \Omega_C^p)\cong \Hdot^{\bullet}(C,\CC).$$

%
\subsection{Cotangent Bundle}
    Let $X=T^{\vee}M:=\Tot_M(\Omega_M)$ be the total space of the cotangent bundle of a smooth quasi-projective variety $M$, which admits a canonical holomorphic symplectic 2-form $\omega_{can}$. Denote $\pi: X\arw M$ the bundle morphism. There is a canonical 1-form $\theta_{can}$ on $X$, which can be described explicitly. For $(p, q)\in T^{\vee}M$, or $p\in M$ and $q\in T^{\vee}_pM$,
    $$\theta_{can}(p, q)=\pi^{*}q, \,\,\text{and  } \omega_{can}=d\theta_{can}.$$ 
\subsubsection{Zero section Lagrangian} The zero section $j: M\hookrightarrow T^{\vee}M$ is always a Lagrangian of $T^{\vee}M=X$ with split normal sequence. Thus,
    \begin{corollary}\label{cor: Ext}
    The self Ext group of the zero section is the Hodge cohomology: 
       \begin{align}\label{Ext=Hodge}
    \Ext_{T^{\vee}M}^{\bullet}(j_*\shO_M, j_*\shO_M)\cong \bigoplus_{p+q=\bullet} \Hdot^q(M, \Omega_M^p).
\end{align}
If $M$ is proper, then by Hodge theory,
\begin{align}\label{Ext=DR}
    \Ext_{T^{\vee}M}^{\bullet}(j_*\shO_M, j_*\shO_M)\cong \Hdot^{\bullet}(M, \CC).
\end{align} 
    \end{corollary}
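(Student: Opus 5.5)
The plan is to specialize Proposition \ref{prop: ssd} to the pair $C_1=C_2=M$ inside $X=T^{\vee}M$, and then apply the Hodge degeneration argument used for Proposition \ref{prop: Ext} in the proper case.

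First I check the hypotheses of Proposition \ref{prop: ssd}. The zero section $M\subset T^{\vee}M$ is Lagrangian. Indeed, $\theta_{can}$ restricts to zero on $M$, since $q=0$ there. Hence $\omega_{can}|_M=d(\theta_{can}|_M)=0$, and $\dim M=\frac12\dim X$. The self-intersection $B=M\cap M=M$ is smooth. Its conormal sequence
$$0\arw N^{\vee}_{M/X}\arw \Omega_X|_M\arw \Omega_M\arw 0$$
splits canonically. The projection $\pi: X\arw M$ satisfies $\pi\circ j=\mathrm{id}_M$, so $j^{*}d\pi$ gives a splitting of $TX|_M\arw$ (tangent of the fibres), or dually $\pi^{*}: \Omega_M\arw \Omega_X|_M$ is a section of the restriction map. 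Equivalently, $TX|_M\cong TM\oplus \Omega_M$, using that the vertical tangent bundle of a vector bundle along the zero section is the bundle itself.

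Next I apply (\ref{eq:Hodge=Ext}) with $C_1=C_2=B=M$. Then $N_{B/C_2}=0$, so $m=\codim(B,C_2)=0$ and $\det(N_{B/C_2})\cong\shO_M$. By Lemma \ref{exc} the excess bundle is $\Omega_M$, which is consistent with $N_{M/X}\cong \Omega_M$. The formula becomes exactly (\ref{Ext=Hodge}). Alternatively, I can argue directly from Theorem \ref{ACH}: it gives $M\times_X M\cong \Tot_M(\Omega_M[-1])$, so $j^{*}j_{*}\shO_M\cong\bigoplus_p \Omega_M^{\vee\,\wedge p}[p]$. Adjunction (\ref{ext}) then yields $\bigoplus_{p}\Hdot^{\bullet-p}(M,\wedge^p \Omega_M)$ after dualizing, since $(\wedge^pT_M)^{\vee}=\Omega^p_M$.

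For (\ref{Ext=DR}), assume $M$ is proper, and note that $M$ is smooth quasi-projective. Then $M$ is a smooth projective variety, and the Hodge-to-de Rham spectral sequence degenerates at $E_1$ by Hodge theory. This is Deligne's degeneration in general, or the Kähler case when $M$ is projective. It gives $\bigoplus_{p+q=\bullet}\Hdot^q(M,\Omega^p_M)\cong \Hdot^{\bullet}(M,\CC)$. This is also the case $\mathfrak{L}=\CC$ of the covering-trick argument, that is, Proposition \ref{prop: Ext}(2) with $n=1$.

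There is no serious obstacle. The only point needing care is that the splitting of the conormal sequence is genuinely given by the bundle projection. This is what Theorem \ref{ACH} requires, and it matches the excess sequence (\ref{excessplit}) because $TC_1|_B+TC_2|_B=TM$ here.
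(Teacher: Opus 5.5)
Your proposal is correct and follows the paper's own route. The paper likewise gets (\ref{Ext=Hodge}) by specializing Proposition \ref{prop: ssd} to $C_1=C_2=B=M$, where $m=0$, $\det(N_{B/C_2})\cong\shO_M$, and the conormal sequence of the zero section splits canonically via $\pi^{*}$; it then uses Hodge-to-de Rham degeneration for proper (hence projective) $M$ to obtain (\ref{Ext=DR}).
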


\subsubsection{Graph of closed 1-forms} The graph $\Gamma_{\eta}$ of a holomorphic closed 1-form $\eta\in \Hdot^0(M, \Omega)$ is a Lagrangian of $X=T^{\vee}M$.  
\begin{lemma}\label{lem: graph}
   If the zero locus of $\eta$, $Z(\eta)$ is a smooth variety, then $\det(N_{Z(\eta)/M})^{\otimes 2}\cong \shO_{Z(\eta)}$.  
\end{lemma}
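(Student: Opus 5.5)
The plan is to show that the Hessian of $\eta$ along $Z:=Z(\eta)$ is a nondegenerate symmetric pairing on the normal bundle. It then identifies $N_{Z/M}$ with $N_{Z/M}^{\vee}$, so $\det(N_{Z/M})\cong\det(N_{Z/M})^{\vee}$, which is the claim.

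\textbf{Step 1: the Hessian and its nondegeneracy.} Let $I\subset \shO_M$ be the ideal sheaf of $Z$. Since $Z$ is the scheme-theoretic zero locus of $\eta$, the section $\eta$ gives a surjection $\eta: T_M\to I$, $v\mapsto \iota_v\eta$. Restricting to $Z$ gives a surjection $T_M|_Z\to I/I^2=N^{\vee}_{Z/M}$. Because $\eta|_Z=0$ and $T_Z\subset T_M|_Z$ kills $I/I^2$ modulo $I^2$, this map factors through $N_{Z/M}=T_M|_Z/T_Z$. The result is a surjection
$$h: N_{Z/M}\arw N^{\vee}_{Z/M},$$
which is the intrinsic derivative $d\eta|_Z$. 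Both bundles have rank $\codim(Z,M)$, because $Z$ is smooth; hence $h$ is an isomorphism of vector bundles. The key input is that smoothness of the scheme $Z(\eta)$ makes $I/I^2$ locally free of the right rank, so the surjection $T_M|_Z\to I/I^2$ has kernel exactly $T_Z$. I would verify this in local coordinates $x_1,\dots,x_n$ with $\eta=\sum f_i\,dx_i$, where $I=(f_1,\dots,f_n)$ and $T_Z=\ker(\partial_j f_i)|_Z$ by the Jacobian criterion.

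\textbf{Step 2: symmetry.} Locally, closedness of $\eta$ makes $\eta=df$ for a local holomorphic function $f$ in the analytic topology. Then $h$ is given by the matrix $(\partial_i\partial_j f)|_Z$, which is symmetric. Algebraically, the same conclusion follows from $d\eta=0$: $\partial_j f_i=\partial_i f_j$. So $h$ is a symmetric pairing. Symmetry is not even needed for the determinant statement; any isomorphism $N\cong N^{\vee}$ suffices. I would still record it because it matches the Morse--Bott picture in the introduction.

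\textbf{Step 3: conclusion.} Taking determinants gives $\det(N_{Z/M})\cong\det(N_{Z/M})^{\vee}$, hence $\det(N_{Z/M})^{\otimes 2}\cong\shO_Z$. The main point needing care is Step 1: checking that the derivative of $\eta$ defines a map on $N_{Z/M}$ that is an isomorphism, rather than merely a rank statement at points. I would handle this by the local Jacobian computation above. This computation shows that at each point of $Z$ the kernel of $(\partial_j f_i)$ is exactly $T_pZ$, which gives constant rank and so a bundle isomorphism.
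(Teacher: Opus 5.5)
\textbf{There is a genuine gap in Step 1.} It is the claim that the contraction map $T_M|_Z\to I/I^2$, $v\mapsto\iota_v\eta$, kills $T_Z$, or equivalently has kernel exactly $T_Z$.

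Write $\eta=\sum_i f_i\,dx_i$ and $J=(\partial_j f_i)$. Your map sends $v\in T_pM$ to the covector $u\mapsto\sum_{i,j}v_i\,\partial_jf_i(p)\,u_j$ on normal vectors, so its kernel is $\ker J(p)^{T}$. The Jacobian criterion gives $T_pZ=\ker J(p)$, and so does the fact that $T_Z$ annihilates $I/I^2$. But that is a statement about the \emph{other} slot of the bilinear form. Smoothness only tells you that $\ker J(p)^{T}$ has the same dimension as $T_pZ$, not that the two subspaces coincide.

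For instance, $\eta=x\,dy$ on $\CC^2$ has smooth zero locus $Z=\{x=0\}$, yet $\partial_y\in T_Z$ is sent to $[x]\neq 0$ in $I/I^2=(x)/(x^2)$. In fact, without $d\eta=0$ the lemma is false. Take $T^{\vee}Z$ for $Z$ a smooth projective curve of genus $g\neq 1$, with the form $\theta_{can}$ (locally $w\,dz$). Its scheme-theoretic zero locus is the zero section, which is smooth with normal bundle $K_Z$, and $K_Z^{\otimes 2}$ has degree $4g-4\neq 0$, so $K_Z^{\otimes 2}\not\cong\shO_Z$. Closedness enters neither Step 1 nor Step 3, and you explicitly say symmetry is not needed. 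So the argument as written would prove a false statement.

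\textbf{The repair is already in your Step 2; you just need to use it inside Step 1.} Closedness gives $\partial_jf_i=\partial_if_j$, so $J=J^{T}$. Then for $v\in T_pZ$ you get $\sum_iv_i\,\partial_jf_i(p)=\sum_iv_i\,\partial_if_j(p)=0$. Hence the surjection $T_M|_Z\to I/I^2$ has kernel exactly $T_Z$ and descends to an isomorphism $N_{Z/M}\cong N^{\vee}_{Z/M}$, after which Step 3 is fine.

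Once corrected, your argument is the transpose of the paper's. The paper uses the intrinsic derivative $D\eta|_Z$, which descends to an injection $N_{Z/M}\hookrightarrow\Omega_M|_Z$ with no hypothesis on $\eta$. Closedness is what makes the composite with $\Omega_M|_Z\to\Omega_Z$ vanish, so that the image lies in $N^{\vee}_{Z/M}$; the paper uses this only implicitly. Your contraction map lands in $I/I^2=N^{\vee}_{Z/M}$ for free, and closedness is what makes it descend from $T_M|_Z$ to $N_{Z/M}$.
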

 \begin{proof}
 We need to show that $\det(N_{Z(\eta)/M})^{\otimes 2}\cong \shO_{Z(\eta)}$. 
     Let $F$ be a vector bundle on $M$, and $s\in \Hdot^0(M, F)$ be a section. Assume that the zero locus $Z(s)\subset M$ is a smooth variety. Then the differential of the section $Ds|_Z$ gives a morphism $TM|_{Z(s)}\arw F|_{Z(s)}$, which descends to an injective morphism $$N_{Z(s)/M}\arw F|_{Z(s)}.$$
     Now $F=\Omega_M$ and $s=\eta$. Then there is an embedding.  
$$N_{Z(\eta)/M}\hookrightarrow \Omega_M|_{Z(\eta)}.$$
If we compose the above embedding by the natural map $\Omega_M|_{Z(\eta)}\arw \Omega_{Z(\eta)}$, it becomes a zero map. Hence, there is an embedding $N_{Z(\eta)/M}\hookrightarrow N_{Z(\eta)/M}^{\vee} $ which is forced to be an isomorphism by equal ranks of both sides. Therefore, the statement concludes by taking the determinant of them.
 \end{proof}

\begin{corollary}\label{cor:graph}
    Let $\eta$ be a holomorphic closed 1-form on $M$ with smooth zero locus $Z(\eta)$, then 
    $$\Ext_{T^{\vee}M}^{\bullet}(\shO_{\Gamma(\eta)}, \shO_M)\cong \bigoplus_{p+q=\bullet} \Hdot^{q-n}(Z(\eta), \Omega_{Z(\eta)}^p(\mathfrak{L}))),$$
    with $n=\codim(Z(\eta), M)$ and if $Z(\eta)$ is proper, then 
    $$\Ext_{T^{\vee}M}^{\bullet}(\shO_{\Gamma(\eta)}, \shO_M)\cong \Hdot^{\bullet-n}(Z(\eta), \det(N_{Z(\eta)/M})_{\delta}).$$
\end{corollary}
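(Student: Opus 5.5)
The plan is to reduce the corollary to Proposition \ref{prop: ssd} and then to part (1) of Proposition \ref{prop: Ext}, working in $X=T^{\vee}M$ with $C_1=\Gamma_{\eta}$ and $C_2=M$ the zero section. Since $\eta$ is closed, $\Gamma_\eta$ is a smooth Lagrangian: it is the image of the section $\eta: M\to T^\vee M$, and $\eta^*\omega_{can}=d\eta=0$. The scheme-theoretic intersection $\Gamma_\eta\cap M$ is the zero scheme $Z(\eta)$. This is smooth by assumption, so the intersection is clean with $B=Z:=Z(\eta)$, and $\codim(B,C_2)=\codim(Z,M)=n$. Note the order of arguments: the Ext is $\Ext(\shO_{\Gamma_\eta},\shO_M)$, so $C_1=\Gamma_\eta$ and $C_2=M$, which makes $N_{B/C_2}=N_{Z/M}$ as in the statement.

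First I verify that the conormal sequence of $Z\subset X$ splits. I use the projection $\pi:X\to M$ and the zero section $j$ with $\pi\circ j=\mathrm{id}$. Restricted to $Z\subset M$, this gives a canonical splitting $\Omega_X|_M\cong \Omega_M\oplus N^\vee_{M/X}$. Restricting further to $Z$ and composing with $\Omega_M|_Z\to\Omega_Z$, it suffices to split $0\to N^\vee_{Z/M}\to\Omega_M|_Z\to\Omega_Z\to0$.

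This last splitting is where I expect the main obstacle, since a general smooth subvariety need not have split conormal sequence. I would use the structure coming from $\eta$. The derivative $D\eta|_Z$ gives the map $TM|_Z\to\Omega_M|_Z$ of Lemma \ref{lem: graph}. Its image is $N_{Z/M}\cong N^\vee_{Z/M}\subset\Omega_M|_Z$, and its kernel contains $TZ$. The Hessian of $\eta$ along normal directions is symmetric because $d\eta=0$. By the proof of Lemma \ref{lem: graph} it is nondegenerate on $N_{Z/M}$, so $TZ=\ker(D\eta|_Z)$. The composite $N^\vee_{Z/M}\hookrightarrow\Omega_M|_Z\xrightarrow{}(\text{image of }D\eta)\cong N^\vee_{Z/M}$ therefore gives a retraction, and hence a splitting. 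If this intrinsic argument has a gap (the derivative of a section is only canonical along the zero locus, which is what we need), the fallback is to check the splitting of the excess sequence (\ref{excessplit}) directly and invoke Theorem \ref{ACH}.

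Once the splitting is established, Proposition \ref{prop: ssd} gives
\[
\Ext^\bullet_{T^\vee M}(\shO_{\Gamma_\eta},\shO_M)\cong\bigoplus_{p+q+n=\bullet}\Hdot^q(Z,\Omega_Z^{p-n}\otimes\det(N_{Z/M})).
\]
Lemma \ref{lem: graph} shows that $\mathfrak{L}=\det(N_{Z/M})$ is $2$-torsion. Reindexing the sum yields the first displayed formula, with the twist by $\mathfrak{L}$. When $Z$ is proper, the covering-trick argument preceding Proposition \ref{prop: Ext} applies with the double cover. It forces the twisted Hodge-to-de Rham spectral sequence to degenerate, which gives $\Hdot^{\bullet-n}(Z,\det(N_{Z/M})_\delta)$, exactly as in Proposition \ref{prop: Ext}(1).
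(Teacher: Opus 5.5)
Your route is the paper's. The paper gives no separate argument for this corollary: it is meant to follow by putting $C_1=\Gamma_\eta$, $C_2=M$ into Proposition~\ref{prop: ssd} and Proposition~\ref{prop: Ext}(1), with Lemma~\ref{lem: graph} supplying the $2$-torsion of $\det(N_{Z/M})$. The splitting of the conormal sequence of $Z\subset T^{\vee}M$ is tacitly taken for granted; the introduction just asserts it for $\eta=df$.

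You rightly isolate that splitting as the crux. Your reduction from $Z\subset T^\vee M$ to $Z\subset M$ is also correct, since the two sequences split or fail to split together. But your proof of the splitting for $Z\subset M$ does not work.

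What $\eta$ provides along $Z$ is $D\eta|_Z\colon T_M|_Z\to\Omega_M|_Z$. By cleanness and $d\eta=0$, this is exactly the composite $T_M|_Z\twoheadrightarrow N_{Z/M}\xrightarrow{\sim}N^{\vee}_{Z/M}\hookrightarrow\Omega_M|_Z$. That is, it is a nondegenerate symmetric form on $N_{Z/M}$, or equivalently a symmetric form on $T_M|_Z$ with radical $T_Z$. Your composite needs a map $\Omega_M|_Z\to\operatorname{im}(D\eta)$, and nothing provides one. A retraction $\Omega_M|_Z\to N^\vee_{Z/M}$ is the same thing as a complement to $T_Z$ in $T_M|_Z$, and a symmetric form does not single out a complement to its radical.

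Your fallback is not independent either. For Lagrangians, the excess sequence (\ref{excessplit}) is identified via $\omega$ with the conormal sequence of $B\subset X$, so Theorem~\ref{ACH} needs exactly the same splitting.

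In fact the splitting fails in general. Let $M\to U=\CC\setminus\{0,1\}$ be the Legendre family $y^2=x(x-1)(x-\lambda)$, a smooth quasi-projective surface. Fix $\lambda_0\in U$ and put $\eta=d\big((\lambda-\lambda_0)^2\big)=2(\lambda-\lambda_0)\,d\lambda$.
\begin{itemize}
\item Since $M\to U$ is smooth, $d\lambda$ is part of a local frame of $\Omega_M$. Hence $Z(\eta)=E:=E_{\lambda_0}$ scheme-theoretically, smooth and proper, with $N_{E/M}\cong\shO_E$.
\item The class of $0\to T_E\to T_M|_E\to N_{E/M}\to0$ is the Kodaira--Spencer class of the family at $\lambda_0$. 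It is nonzero, because the period map of the Legendre family is a local isomorphism.
\item Hence the conormal sequence of $E\subset M$, and therefore of $E\subset T^\vee M$, does not split. By Theorem~\ref{ACH}, the derived intersection $\Gamma_\eta\times_{T^\vee M}M$ is not $\Tot_E(\Omega_E[-1])$ over $\Gamma_\eta\times M$.
\end{itemize}
So Proposition~\ref{prop: ssd} simply does not apply. This refutes the step, not necessarily the conclusion of the corollary.

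The repair is to add the hypothesis that the conormal sequence of $Z(\eta)\subset M$ splits, which is what the paper's argument really uses. With it, the rest of your proposal (Proposition~\ref{prop: ssd}, Lemma~\ref{lem: graph}, and the covering trick for proper $Z$) goes through as in the paper. Dropping the hypothesis would require a genuinely different argument. One option would be proving degeneration directly for $\Ext^\bullet_{T^\vee M}(\shO_{\Gamma_\eta},\shO_M)\cong\mathbb{H}^\bullet(M,(\Omega_M^\bullet,\eta\wedge))$; neither you nor the paper supply such an argument.
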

\subsubsection{Conormal varieties}
The conormal bundle $N_{Z/M}^{\vee}$ of a smooth variety $Z\subset M$ is a Lagrangian of $X=T^{\vee}M$. Let $Z_1$ and $Z_2$ be two smooth subvarieties of $M$ with a transverse intersection $Z=Z_1\cap Z_2$. Then $C_1\cap C_2=Z$ and the conormal sequence of $Z\subset X=T^{\vee} M$ splits. 
\begin{corollary}\label{cor: ConormalLag}
For conormal varieties $C_1=N_{Z_1/M}^{\vee}, C_2=N_{Z_2/M}^{\vee}\subset T^{\vee} M$ as above, 
    $$ \Ext^{\bullet}_{T^{\vee}M}(\shO_{C_1}, \shO_{C_2})\cong\bigoplus_{p+q+n=\bullet}
\Hdot^q(Z, \Omega_{Z}^{p-n}\otimes \det(N_{Z_1/M})\otimes \det(N_{Z_2/M}^{\vee})),$$
where $n=\codim (Z, M)$. In particular, if furthermore, $\det(N_{Z_1/M})=\det(N_{Z_2/M})$ and $Z$ is proper, then 
  $$ \Ext^{\bullet}_{T^{\vee}M}(\shO_{C_1}, \shO_{C_2})\cong \Hdot^{\bullet-\codim(Z, M)}(Z, \CC).$$
\end{corollary}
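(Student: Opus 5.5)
We aim to deduce Corollary \ref{cor: ConormalLag} from Proposition \ref{prop: ssd} applied to $C_1=N^{\vee}_{Z_1/M}$ and $C_2=N^{\vee}_{Z_2/M}$. The first step is to identify the intersection and the relevant normal bundles. Since $Z_1$ and $Z_2$ meet transversally, at a point $z\in Z$ we have $N^{\vee}_{Z_1/M,z}\cap N^{\vee}_{Z_2/M,z}=(T_zZ_1+T_zZ_2)^{\perp}=0$. Hence $C_1\cap C_2$ is set-theoretically the zero section over $Z$. To see that it is scheme-theoretically $Z$ and smooth, we would work in local coordinates on $M$ adapted to $Z_1$ and $Z_2$ (transversality makes this possible). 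In those coordinates $C_1$ and $C_2$ are linear subspaces whose intersection is cut out reducedly. This gives $B=Z$, clean. The conormal sequence of $Z\subset T^{\vee}M$ splits, as stated in the text: $\Omega_X|_Z\cong \Omega_M|_Z\oplus T M|_Z$, and $\Omega_M|_Z\to\Omega_Z$ splits because $Z$ is smooth affine-locally. Globally we would use the splitting induced by the zero-section projection $X\to M$ together with $TM|_Z\cong$ fibre directions, or simply invoke the claim already made in the text.

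Next we compute $N_{B/C_2}$. There is a sequence $0\to N_{Z/Z_2}\to N_{Z/C_2}\to N^{\vee}_{Z_2/M}|_Z\to 0$, coming from $Z\subset Z_2\subset C_2$ where $Z_2$ is the zero section of $C_2$. Transversality gives $N_{Z/Z_2}\cong N_{Z_1/M}|_Z$. Therefore
$$\det N_{Z/C_2}\cong \det(N_{Z_1/M})\otimes\det(N^{\vee}_{Z_2/M})$$
restricted to $Z$, and its rank satisfies $\codim(Z,C_2)=\codim(Z_1,M)+\codim(Z_2,M)=\codim(Z,M)=n$.

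Substituting these into (\ref{eq:Hodge=Ext}) yields the first formula with $m=n$. For the second statement, the hypothesis $\det N_{Z_1/M}=\det N_{Z_2/M}$ makes the twisting line bundle trivial. Since $Z$ is proper, Hodge theory then identifies $\bigoplus_{p+q}\Hdot^q(Z,\Omega^p_Z)$ with $\Hdot^{\bullet}(Z,\CC)$, exactly as in Proposition \ref{prop: Ext}(2).

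The main obstacle is checking that the scheme-theoretic intersection is exactly the reduced $Z$, together with the global splitting. We would handle both through the local-coordinate model and the canonical decomposition of $TX$ along the zero section.
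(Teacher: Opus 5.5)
Your route is the paper's own: identify $B=Z$, use that the conormal sequence of $Z\subset T^{\vee}M$ splits, apply Proposition~\ref{prop: ssd} with $m=n$ and $\det N_{Z/C_2}\cong\det(N_{Z_1/M})|_Z\otimes\det(N^{\vee}_{Z_2/M})|_Z$, and finish with Hodge theory. Your local-coordinate description of $C_1\cap C_2$ is correct, and so is your computation of $N_{Z/C_2}$ and its rank.

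\textbf{The gap.} You flag the global splitting as the main obstacle but never establish it, and in this generality it is false. Along the zero section $\Omega_X|_M\cong\Omega_M\oplus T_M$, and $\Omega_X|_Z\to\Omega_Z$ factors through the projection onto $\Omega_M|_Z$. Hence the conormal sequence of $Z\subset X$ splits if and only if $0\to N^{\vee}_{Z/M}\to\Omega_M|_Z\to\Omega_Z\to 0$ splits. Your zero-section projection only supplies this reduction, and affine-local splittings of $\Omega_M|_Z\to\Omega_Z$ do not glue.

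A counterexample: take $M=\PP^2$, $Z_1=Z$ a smooth conic, and $Z_2=M$, which is a transverse pair. Then $T_{\PP^2}|_Z\cong\shO_{\PP^1}(3)^{\oplus 2}$, which is not $\shO_{\PP^1}(2)\oplus\shO_{\PP^1}(4)$, so the sequence does not split. More generally, by Van de Ven's theorem this happens for every non-linear smooth $Z\subset\PP^N$. So the hypothesis of Proposition~\ref{prop: ssd} is unavailable, and by Theorem~\ref{ACH} the fibration structure $C_1\times_X C_2\cong\Tot_Z(\Omega_Z[-1])$ over $C_1\times C_2$ genuinely fails. The sentence preceding the corollary asserts the splitting without proof, so appealing to \emph{the claim already made in the text} only relocates the gap; the paper's argument has the same hole.

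\textbf{A fix.} The conclusion survives without any splitting if you use the fibrewise scaling action of $\GG_m$ on $T^{\vee}M$.
\begin{enumerate}
\item $C_1$ and $C_2$ are $\GG_m$-stable and $Z$ is pointwise fixed.
\item The canonical identification $\mathcal{E}xt^p_X(\shO_{C_1},\shO_{C_2})\cong\wedge^{p-n}E\otimes\det N_{Z/C_2}$ is equivariant.
\item $E\cong\Omega_M|_Z/N^{\vee}_{Z/M}\cong\Omega_Z$ lies entirely in the weight-one fibre directions.
\item $N_{Z/C_2}\cong N_{Z/Z_2}\oplus N^{\vee}_{Z_2/M}|_Z$ splits by weight, with weights $0$ and $1$.
\item Hence $\mathcal{E}xt^p$ is pure of weight $p-n+\codim(Z_2,M)$, up to a global sign convention.
\end{enumerate}
The local-to-global spectral sequence of Proposition~\ref{prop: ss} is $\GG_m$-equivariant. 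Its term $E_2^{p,q}=\Hdot^q(Z,\mathcal{E}xt^p)$ has a weight depending only on $p$, and every $d_r$ with $r\geq 2$ changes $p$. So all differentials vanish, which gives the first isomorphism; your Hodge-theory step then gives the second. Alternatively, add the hypothesis that the conormal sequence of $Z\subset M$ splits.

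\textbf{Indexing.} Read the sum as $p+q=\bullet$ with $\Omega_Z^{p-n}$, as dictated by Proposition~\ref{prop: ss}. Written as $p+q+n=\bullet$, the shift $n$ is counted twice, and the second formula would come out as $\Hdot^{\bullet-2n}$ rather than $\Hdot^{\bullet-n}$.
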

\begin{remark}
    In general, the derived fiber product of two conormal varieties is quasi-isomorphic to the $(-1)$-shifted cotangent space of the derived fiber product $Z_1\times_M Z_2$ by \cite{calaque2019shifted},
    $$C_1\times_X C_2\cong \Tot_{Z_1\times_M Z_2}(\LL_{Z_1\times_M Z_2}[-1]),$$
    which is quasi-isomorphic to $\Tot_{Z_1\cap Z_2}(\Omega_{Z_1\cap Z_2}[-1])$ if $Z_1\cap Z_2$ transversely. 
\end{remark}

%\begin{examples}
%    Let $M$ be a smooth quasi-projective variety and $f: M\arw \CC$ be a holomorphic function. Consider $C_1:=M$ and $C_2:=Graph(df)$, two complex Lagrngians of $X=T^{\vee}M$. Then $C_1\cap C_2=crit(f):=Z$, and 
%    $$ \Ext^{\bullet}(j_{1*}\shO_{C_1},j_{2*}\shO_{C_2})\cong\bigoplus_{p+q+m=\bullet}
%\Hdot^q(Z, \Omega_Z^{p-m}\otimes \omega_{i_1})$$
%holds if the following sequence splits
%$$0\arw N^{\vee}(Z, M)\arw \Omega_M|Z\arw \Omega_Z \arw 0$$
%\end{examples}
%
%
%
%
\section{Equivariant Lagrangian Intersection and Symplectic reduction}\label{sec:eqsym}
We expect that Proposition \ref{prop: ssd} and \ref{prop: Ext} hold for symplectic reduction of $X$ by Hamiltonian $G$-action with $G$ a reductive group. 
\subsection{G-invariant Lagrangians}
Suppose that there is a Hamiltonian $G$-action on $(X, \omega)$ with a holomorphic moment map
$\mu:\, X\longrightarrow \mathfrak{g}^{\vee},$ and let $C\hookrightarrow X$ be a $G$-invariant connected Lagrangian, \emph{i.e.}, $C$ is a connected Lagrangian, and $g\cdot C\subset C$ for every $g\in G$.
We first recall the following fact. 
\begin{lemma}\label{lem: invLag}
  The moment image of $C$ consists of a single central element $c\in \mathfrak{g}^{\vee}$.
\end{lemma}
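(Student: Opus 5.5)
The plan has two parts. First, $\mu$ is constant on $C$. Second, its constant value is fixed by the coadjoint action, i.e.\ it is central.

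\textbf{Constancy.} For $\xi\in\mathfrak{g}$, write $\mu^\xi=\langle\mu,\xi\rangle$, so that $d\mu^\xi=\iota_{\xi_X}\omega$, where $\xi_X$ is the fundamental vector field. Since $C$ is $G$-invariant, $\xi_X(x)\in T_xC$ for every $x\in C$. For $v\in T_xC$ we then get
\[
d\mu^\xi(v)=\omega(\xi_X(x),v)=0,
\]
because both vectors lie in $T_xC$ and $C$ is Lagrangian. Hence $d(\mu^\xi|_C)=0$ for every $\xi$, so $\mu|_C$ is locally constant. As $C$ is connected (and smooth, hence irreducible), $\mu(C)=\{c\}$ for a single $c\in\mathfrak{g}^\vee$.

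\textbf{Centrality.} I will use $G$-equivariance of the moment map, $\mu(g\cdot x)=\operatorname{Ad}^*_g\mu(x)$. Pick $x\in C$. Since $g\cdot x\in C$, we get $\operatorname{Ad}^*_g c=\mu(g\cdot x)=c$ for all $g\in G$. So $c$ is fixed by the coadjoint action, i.e.\ $c\in(\mathfrak{g}^\vee)^G$, which is what "central element" means here: $c$ vanishes on $[\mathfrak{g},\mathfrak{g}]$.

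\textbf{Main obstacle.} The only delicate point is the convention that a Hamiltonian action comes with an equivariant moment map. If one only assumes a moment map up to constants, equivariance can fail by a cocycle. I will therefore state that equivariance is part of the definition of a Hamiltonian action. Alternatively, for connected $G$ the infinitesimal version suffices: for $\eta\in\mathfrak{g}$,
\[
\langle c,[\xi,\eta]\rangle=\omega(\xi_X,\eta_X)(x),
\]
and this vanishes because both vectors are tangent to the Lagrangian $C$. This gives centrality directly.
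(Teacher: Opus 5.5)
Your proof is correct and follows the same route as the paper. Both show that $d\mu^\xi$ vanishes on $TC$ via the Lagrangian condition, conclude that $\mu$ is constant on the connected $C$, and then use $G$-equivariance of $\mu$ together with $G$-invariance of $C$ to see that $c$ is fixed by the coadjoint action. Your version is slightly more careful than the paper's: you state explicitly that $\xi_X(x)\in T_xC$ by $G$-invariance, which is what the vanishing $\omega(\xi_X,v)=0$ actually requires, whereas the paper only says that the curve's tangent vector is isotropic.
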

\begin{proof}
    Let  $\alpha(t)\subset C$ be any curve and $\xi\in \mathfrak{g}$. Denote $\xi^{\#}$ be the generating vector field on $X$. By definition of moment map, $d\mu_{\xi}(\alpha'(t))=\omega(\alpha'(t), \xi^{\#})=0$ since $\alpha'(t)$ is an isotropic vector. Hence, the tangent map $d\mu|_C=0$, which implies that the image of $M$ consists of a single point $c$. Note that $\mu$ is $G$-equivariant, and $C$ is G-invariant, $c$ must  be a central element. 
\end{proof}

\subsection{Symplectic Reductions and Shifted Symplectic Structures} Consider the derived fiber product
 \begin{equation*}
\begin{tikzcd}
X/G\times_{\mathfrak{g}^{\vee}/G} BG \arrow[r] \arrow[d]
& BG=\{*\}/G \arrow[d, "c"] \\
 X/G\arrow[r, "\mu"] & \mathfrak{g}^{\vee}/G.
\end{tikzcd}
 \end{equation*}
We write the derived stack $X/G\times_{\mathfrak{g}^{\vee}/G} BG$ as $X\sslash  G$, called the \emph{symplectic quotient stack} of $X$ by $G$. See \cite{pantev2013shifted}, \cite{anel2022shifted} and \cite{park2025introduction} for the background of shifted symplectic structures, reductions and shifted Lagrangians. 

\begin{proposition}[\cite{pantev2013shifted} and \cite{anel2022shifted}]\label{prop: symred}
    The derived quotient stack $X\sslash  G$ is a $0$-shifted symplectic stack. Moreover, the $G$-invariant Lagrangian $j: C\hookrightarrow X$ induces a (0-shifted) Lagrangian morphism $\alpha: C/G\arw X\sslash  G$, i.e., there is a quasi-isomorphism $\TT_{C/G}\cong \LL_{\alpha}[-1]$, where $\LL_{\alpha}$ is the relative cotangent complex and $\TT_{C/G}$ is the tangent complex of $C/G$. 
\end{proposition}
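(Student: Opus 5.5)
The plan is to derive both claims of Proposition \ref{prop: symred} from the calculus of shifted symplectic structures and Lagrangian intersections of \cite{pantev2013shifted}.

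\emph{Step 1: the quotient $X/G$.} The $G$-invariant form $\omega$ descends to a closed $2$-form on $X/G$. The moment map $\mu$ gives $X/G\arw \mathfrak{g}^{\vee}/G$ the structure of a Lagrangian morphism. Here $\mathfrak{g}^{\vee}/G\cong T^{\vee}[1](BG)$ carries its canonical $1$-shifted symplectic structure. Concretely, I would write the tangent complex as
\[
\TT_{X/G}=[\mathfrak{g}\otimes\shO_X\arw T_X],
\]
sitting in degrees $-1,0$. Its relative version over $\mathfrak{g}^{\vee}/G$ is
\[
[\mathfrak{g}\otimes\shO\arw T_X\arw \mathfrak{g}^{\vee}\otimes\shO],
\]
where the second map is $d\mu$ and the first is the infinitesimal action. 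The moment map identity $d\mu_\xi=\iota_{\xi^\#}\omega$ shows that $\omega$ identifies this complex with its dual shifted appropriately. This is exactly the nondegeneracy required of a Lagrangian structure, with isotropy homotopy given by $\mu$ itself.

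\emph{Step 2: the reduction and its symplectic structure.} The point $c$ is central by Lemma \ref{lem: invLag}, so $BG\arw \mathfrak{g}^{\vee}/G$, $*\mapsto c$, is well defined. It is also Lagrangian, being the zero section of $T^{\vee}[1]BG$ translated by the invariant element $c$. The derived intersection of two Lagrangians in a $1$-shifted symplectic stack is $0$-shifted symplectic by \cite{pantev2013shifted}. This gives the first claim for $X\sslash G=X/G\times_{\mathfrak{g}^{\vee}/G}BG$.

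\emph{Step 3: the Lagrangian $C/G$.} Since $\mu|_C\equiv c$, the map $C/G\arw X/G\arw\mathfrak{g}^{\vee}/G$ factors through $BG$, and this produces $\alpha\colon C/G\arw X\sslash G$. I would then apply the "Lagrangian correspondence composition" theorem. The pair consisting of $C/G\arw X/G$ and the factorization through $BG$ should be Lagrangian for the $1$-shifted structure restricted to the boundary $X/G\times BG$. Reducing, this yields a $0$-shifted Lagrangian structure on $\alpha$.

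Alternatively, one can check the nondegeneracy directly, using $\LL_\alpha$ computed from the fiber sequences. The cotangent complex of $X\sslash G$ restricted to $C$ is
\[
[\mathfrak{g}\arw \Omega_X|_C\arw \mathfrak{g}^{\vee}]
\]
in degrees $-1,0,1$. The cotangent complex of $C/G$ is $[\Omega_C\arw\mathfrak{g}^{\vee}]$. Hence $\LL_\alpha[-1]$ is quasi-isomorphic to
\[
[\mathfrak{g}\arw N^{\vee}_{C/X}]\oplus\text{(acyclic)},
\]
and under $N^{\vee}_{C/X}\cong T_C$ it becomes
\[
[\mathfrak{g}\arw T_C]=\TT_{C/G}.
\]

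\emph{Main obstacle.} The hard step is Step 3. One must verify that the isotropic homotopy on $C/G$ is compatible with both the homotopy $\mu$ and the one coming from $BG$. Equivalently, one must check that the map in the quasi-isomorphism $\TT_{C/G}\cong\LL_\alpha[-1]$ is really induced by the form, and not just an abstract isomorphism. I would handle this by tracking the explicit two-term models above and checking that the connecting maps match. The Lagrangian condition on $C$ gives $\omega|_C=0$ and $d\mu|_C=0$, which make the relevant square commute.
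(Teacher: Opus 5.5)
Your outline follows the paper's proof. Both identify $\mathfrak{g}^{\vee}/G\cong T^{\vee}[1]BG$, make $\mu\colon X/G\arw\mathfrak{g}^{\vee}/G$ and $c\colon BG\arw\mathfrak{g}^{\vee}/G$ Lagrangian, apply the Lagrangian intersection theorem of \cite{pantev2013shifted}, obtain $\alpha$ from the factorization of $\mu\circ j$ through $BG$, and compute $\LL_\alpha$ from cotangent fiber sequences.

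Your direct check in Step 3 is correct and equivalent to the paper's computation; only the triangle differs. The paper uses $\alpha^{*}\LL_{\tilde{c}}\arw\LL_j\arw\LL_\alpha$ for $C/G\arw X\sslash G\arw X/G$, with $\LL_{\tilde c}\simeq\mathfrak{g}[1]$ by base change and $\LL_j\simeq N^{\vee}_{C/X}[1]\cong T_C[1]$. You use $\alpha^{*}\LL_{X\sslash G}\arw\LL_{C/G}\arw\LL_\alpha$ instead. Your explicit three-term model has a small advantage: it shows that the map $\mathfrak{g}\arw N^{\vee}_{C/X}\cong T_C$ is $\xi\mapsto d\mu_\xi|_C$, i.e.\ the infinitesimal action, which the paper leaves implicit.

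The paper proves only this quasi-isomorphism, and that is all the statement asserts (``i.e., there is a quasi-isomorphism''). So the compatibility with isotropic structures that you call the main obstacle is not needed for the statement as written, and the paper does not address it. The correspondence-composition route can be dropped; as phrased (``the $1$-shifted structure restricted to the boundary $X/G\times BG$'') it is too vague to check anyway.

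One claim in Step 1 is wrong, although the same ingredients repair it. The complex $[\mathfrak{g}\otimes\shO\arw T_X\arw\mathfrak{g}^{\vee}\otimes\shO]$ in degrees $-1,0,1$ is not the relative tangent complex of $\mu$. It is $\TT_{X\sslash G}$, and its self-duality under $\omega$ is the nondegeneracy of the $0$-shifted form on the reduction, not the Lagrangian condition on $\mu$.

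The map $\TT_{X/G}\arw\mu^{*}\TT_{\mathfrak{g}^{\vee}/G}$ is the identity on the $\mathfrak{g}$-terms, so its fiber is $\TT_\mu\simeq[T_X\xrightarrow{d\mu}\mathfrak{g}^{\vee}\otimes\shO_X]$ in degrees $0,1$. What must be checked is $\TT_\mu\simeq\LL_{X/G}=[\Omega_X\arw\mathfrak{g}^{\vee}\otimes\shO_X]$. This follows from $\omega\colon T_X\cong\Omega_X$ together with the moment map identity; the paper states the dual form, $\LL_\mu\simeq[\mathfrak{g}\arw\Omega_X]\cong\TT_{X/G}$.

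Similarly, the isotropic structure on $\mu$ is not $\mu$ itself. It is the equivariant form $\omega$, which the moment map identity makes a nullhomotopy of the pulled-back $1$-shifted form.
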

\begin{proof}
  Note that $\mathfrak{g}^{\vee}/G\cong T^{\vee}[1]BG:=\Tot_{BG}(\TT_{BG}[1])$ is a 1-shifted symplectic stack. The shifted symplectic structure comes from the canonical symplectic structure of the 1-shifted cotangent stack, which is an analogue of the canonical symplectic structure of the cotangent bundle of a smooth variety. 
  
  Both $\mu: X/G\rightarrow \mathfrak{g}^{\vee}/G$ and $c: BG\rightarrow \mathfrak{g}^{\vee}/G$ are Lagrangian morphisms. The first one can be seen by calculating the relative cotangent complex by the distinguished triangle:
  $$\mu^{*}\LL_{\mathfrak{g}^{\vee}/G}\cong[\mathfrak{g}\arw \mathfrak{g^{\vee}}]\arw \LL_{X/G}\cong[\Omega_X\arw \mathfrak{g}^{\vee}]\arw \LL_{\mu}\xrightarrow{[1]},$$
  which implies that $\LL_{\mu}\cong [\mathfrak{g}\arw \Omega_X\cong T_X]\cong \TT_{X/G}.$ Similarly for the latter one. Then the derived fiber product $X\sslash  G$ is a Lagrangian intersection for the 1-shifted symplectic stack $\mathfrak{g}^{\vee}/G$, which admits a 0-shifted symplectic structure by \cite{pantev2013shifted}.  

  The $G$-invariant Lagrangian $C\subset X$ induces a morphism $j: C/G\arw X/G$. Proposition \ref{prop: symred} implies that $\mu\circ j$ factors through the right vertical morphism $c: BG\arw \mathfrak{g^{\vee}}/G$. Then the universal property of derived fiber product gives a unique morphism $\alpha: C/G\arw X\sslash  G$ fitting into the commutative diagram. 
  \begin{equation*}
   \begin{tikzcd}
    C/G 
    \arrow[drr, bend left] \arrow[ddr, bend right, "j"] \arrow[dr, dotted, "\alpha" description] &&\\
    & X\sslash  G \arrow[r, "\tilde{\mu}"] \arrow[d, "\tilde{c}"]
         & BG=\{*\}/G \arrow[d, "c"] \\
 & X/G\arrow[r, "\mu"] & \mathfrak{g}^{\vee}/G.
   \end{tikzcd}
  \end{equation*}
  Notice that there is a distinguished triangle for $C/G\arw X\sslash  G\arw X/G$:
  $$j^{*}\LL_{\tilde{c}}\arw \LL_{j} \arw\LL_{\alpha}\xrightarrow{[1]}$$
  By base-change property, $\LL_{\tilde{c}}=\tilde{\mu}^{\vee}\LL_{c}=\mathfrak{g}[1]$, and by Lagrangian property, $\LL_{j}=T_C[1]$
  Then by combining with the fact that $\TT_{[C/G]}\cong [\mathfrak{g}\arw T_M],$ we have $\TT_{C/G}\cong \LL_{\alpha}[-1]$, and hence the proposition concludes. 
   \end{proof}
\subsection[Alternative Description of X// G]{Alternative Description of $X\sslash G$}
Consider the following derived fiber product of schemes,
 \begin{equation*}
\begin{tikzcd}
X\times_{\mathfrak{g}^{\vee}} \{*\}:=\mu_{der}^{-1}(c) \arrow[r] \arrow[d]
& \{*\} \arrow[d, "c"] \\
 X \arrow[r, "\mu"] & \mathfrak{g}^{\vee}.
\end{tikzcd}
 \end{equation*}
The derived scheme $\mu_{der}^{-1}(c)$ is called the \emph{derived fiber} of $\mu$ over $c\in\mathfrak{g}^{\vee}$, whose classical truncation $(\mu_{der}^{-1}(c))_{cl}=\mu^{-1}(c)$, the ordinary fiber over $c$. Then the $G$-equivalence of $\mu$ induces a natural action of $G$ on $\mu_{der}^{-1}(c)$, and $$X\sslash  G\cong \mu_{der}^{-1}(c)/G.$$
\begin{remark}
    \begin{enumerate}
        \item If $\dim \mu^{-1}(c)=\dim X-\dim G$, then $\mu_{der}^{-1}(c)\cong \mu^{-1}(c)$, and $X\sslash  G$ is just the quotient stack $\mu^{-1}(c)/G$. 
        \item If further $c\in \mathfrak{g}^{\vee}$ is a regular value, then $\mu^{-1}(c)$ is smooth, and the $G$-action on $\mu^{-1}(c)$ is free, and $X\sslash  G=\mu^{-1}(c)/G$ is a smooth symplectic variety, the usual holomorphic Marsden–Weinstein quotient (see \cite{marsden1974reduction}). 
    \end{enumerate}
\end{remark}

%
%
%

%
%

%Let $M_{\alpha}(\alpha=1,2)$ be two smooth $G$-invariant complex Lagrangians of $X$ with a clean intersection $M$. We seek an equality of Proposition \ref{prop: ssd} holds for the symplectic reduction of $X$, and we may assume that both $C_1$ and $C_2$ lie in the same fiber $\mu^{-1}(c)$, where  $c\in Z(\mathfrak{g}^{\vee})\subset \mathfrak{g}^{\vee}$ be a central element. 
\subsection{Invariant Lagrangian intersections and Excess Tangent Complexes}\label{subsec: excess}
Let $C_1$ and $C_2$ be two smooth connected $G$-invariant Lagrangians of $X$ with a nonempty clean intersection $B$. Note that this means that both $C_1$ and $C_2$ have the same moment central value $c\in \mathfrak{g}^{\vee}$ by Lemma \ref{lem: invLag}. Then both $C_1/G$ and $C_2/G$ are Lagrangians of $X\sslash  G$ by Proposition \ref{prop: symred}, and there is a commutative diagram:
%\begin{equation*}
%\begin{tikzcd}
%B/G \arrow[r,"i_2"] \arrow[d,"i_1"]
%& C_2/G \arrow[d,hook', "\alpha_2"] \\
% C_1/G \arrow[r,hook, "\alpha_1"]
%& X\sslash  G.
%\end{tikzcd}
% \end{equation*} 
%
 \begin{equation}\label{diagram: equi}
   \begin{tikzcd}
    B/G 
    \arrow[drr, bend left,"i_2"] \arrow[ddr, bend right, "i_1"] \arrow[dr, dotted, "\gamma" description] &&\\
    & C_1/G\times_{X\sslash  G} C_2/G \arrow[r,"\beta_2"] \arrow[d,"\beta_1"]
& C_2/G \arrow[d,hook', "\alpha_2"] \\
 & C_1/G \arrow[r,hook, "\alpha_1"]
& X\sslash  G.
   \end{tikzcd}
  \end{equation}
  The tangent complexes of the triple $(\beta_2, i_2, \gamma)$ satisfy the following distinguished triangle:
  \begin{align}\label{seq: triple}
      \TT_{\gamma}\arw \TT_{i_2}\arw \gamma^{*}\TT_{\beta_2}\xrightarrow{[1]}.
  \end{align}
  Note that $\TT_{\beta_2}=\beta_1^{*}\TT_{\alpha_1}=\beta_1^{*}\LL_{C_1/G}[-1]$, and $\TT_{i_2}=N_{B/C_2}[-1]$.
  Combine with the excess sequence
  $$ 0\arw N_{B/C_2}\arw N_{C_1/X}\cong \Omega_{C_1}\arw E\cong \Omega_B\arw 0$$
  and $\LL_{B/G}=[\Omega_{C_1}\arw \mathfrak{g}^{\vee}]$, we can deduce that
  $$ \TT_{\gamma}\cong [\Omega_B\arw \mathfrak{g}^{\vee}][-2]\cong \LL_{B/G}[-2].$$
  We may call $\TT_{\gamma}[2]$ the \emph{excess tangent complex} for the above intersection diagram, which can be viewed as a generalization of the excess bundle of intersection of classical schemes. See also Khan \cite{khan2021virtual} for the topic of virtual excess intersection formula. 
In summary, we obtain the following excess distinguished triangle in $B/G$: 
\begin{align}\label{seq: derexcseq}
      \LL_{B/G}[-2] \arw \TT_{i_2}\arw i_1^*\LL_{C_1/G}\xrightarrow{[1]}.
  \end{align}
  
  Define $K_{C/G}:=\det(\LL_{C/G})\cong K_C\otimes \mathfrak{g}^{\vee}$ to be the canonical bundle on the quotient stack $C/G$. Similarly to Lemma \ref{lem: canlag}, we have the following
\begin{proposition}\label{prop: eqcanlag}
Use notations above, we have 
    $$(K^{-1}_{C_1/G}\otimes K_{C_2/G})|_{B/G}\otimes det(\TT_{i_2})^2\cong \shO_{B/G}$$
\end{proposition}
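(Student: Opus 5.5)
The plan is to take determinants in the excess distinguished triangle (\ref{seq: derexcseq}) on $B/G$, following the proof of Lemma \ref{lem: canlag}. Everything will be done in $\Pic(B/G)$, i.e.\ with $G$-equivariant line bundles on $B$. I use the determinant of a perfect complex, which is multiplicative on distinguished triangles and satisfies $\det(F[1])\cong\det(F)^{-1}$ and $\det(F^{\vee})\cong \det(F)^{-1}$. All complexes involved are two-term complexes of $G$-equivariant vector bundles, so the determinants are equivariant line bundles.

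First I will compute $K_{C_1/G}|_{B/G}$. The triangle (\ref{seq: derexcseq}) gives
$$\det(i_1^*\LL_{C_1/G})\cong \det(\TT_{i_2})\otimes\det(\LL_{B/G}[-2]) = \det(\TT_{i_2})\otimes\det(\LL_{B/G}),$$
because a shift by $2$ leaves the determinant unchanged. With $K_{C/G}:=\det \LL_{C/G}$ this reads
$$K_{C_1/G}|_{B/G}\cong K_{B/G}\otimes\det(\TT_{i_2}).$$
This is the equivariant analogue of (\ref{eq: can1}).

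Next I will compute $K_{C_2/G}|_{B/G}$. The transitivity triangle for $B/G\xrightarrow{i_2}C_2/G$ is
$$i_2^*\LL_{C_2/G}\arw\LL_{B/G}\arw\LL_{i_2}\xrightarrow{[1]}.$$
Here $\LL_{i_2}=\TT_{i_2}^{\vee}[1]$, since $\TT_{i_2}=N_{B/C_2}[-1]$ and so $\LL_{i_2}=N^{\vee}_{B/C_2}[1]$. Taking determinants gives
$$K_{B/G}\cong K_{C_2/G}|_{B/G}\otimes\det(\TT_{i_2}^{\vee}[1]) = K_{C_2/G}|_{B/G}\otimes\det(\TT_{i_2}),$$
using $\det(\TT_{i_2}^{\vee}[1])=\det(\TT_{i_2}^{\vee})^{-1}=\det(\TT_{i_2})$. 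Hence
$$K_{C_2/G}|_{B/G}\cong K_{B/G}\otimes\det(\TT_{i_2})^{-1},$$
which is the analogue of (\ref{eq: can2}).

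Combining the two displays gives
$$K_{C_1/G}^{-1}\otimes K_{C_2/G}|_{B/G}\cong\det(\TT_{i_2})^{-2},$$
which is the claimed isomorphism.

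The main point to check is the sign and shift bookkeeping in the excess triangle. Specifically, I need that $\TT_{\gamma}\cong\LL_{B/G}[-2]$ carries an even shift, so that its determinant is $\det\LL_{B/G}=K_{B/G}$ and not its inverse. A second point is that the triangle (\ref{seq: derexcseq}) and the identification $\TT_{\beta_2}=\beta_1^*\LL_{C_1/G}[-1]$ hold $G$-equivariantly, so that the resulting isomorphism lives in $\Pic(B/G)$. Both follow from the construction in \S\ref{subsec: excess}, because the $\mathfrak g$ and $\mathfrak g^{\vee}$ terms enter with the same $G$-action. As a sanity check on the $\mathfrak g$-contributions, I will verify that the factors $\det\mathfrak g^{\pm1}$ cancel in the combination when one works directly with the explicit complexes $\LL_{C_i/G}=[\Omega_{C_i}\arw\mathfrak g^{\vee}]$. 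The statement then reduces to Lemma \ref{lem: canlag}, together with the observation that those isomorphisms were canonical and hence $G$-equivariant.
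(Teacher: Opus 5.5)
Your strategy (determinants of the excess triangle and of the transitivity triangle for $i_2$) is the paper's. However, the bookkeeping goes wrong in three places, and each of your two intermediate formulas comes out as the inverse, in $\det\TT_{i_2}$, of the true one.

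(i) By (\ref{seq: triple}) and $\TT_{\beta_2}=\beta_1^{*}\LL_{C_1/G}[-1]$, the third term of the excess triangle is $\gamma^{*}\TT_{\beta_2}=i_1^{*}\LL_{C_1/G}[-1]$. The display (\ref{seq: derexcseq}) drops this shift. For trivial $G$ the triangle must be a rotation of the excess sequence $0\arw N_{B/C_2}\arw\Omega_{C_1}|_B\arw\Omega_B\arw 0$ shifted by $[-1]$. The shift that matters is this odd one, not the even one on $\TT_{\gamma}$ that you flagged.

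(ii) For a triangle $P\arw Q\arw R\xrightarrow{[1]}$ one has $\det Q\cong\det P\otimes\det R$. Your first display puts $\det(\LL_{B/G}[-2])$ on the wrong side. Done correctly, $\det\TT_{i_2}\cong K_{B/G}\otimes K^{-1}_{C_1/G}|_{B/G}$, i.e.\ $K_{C_1/G}|_{B/G}\cong K_{B/G}\otimes\det(\TT_{i_2})^{-1}$.

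(iii) $\LL_{i_2}=\TT_{i_2}^{\vee}=N^{\vee}_{B/C_2}[1]$, not $\TT_{i_2}^{\vee}[1]$ (that is $N^{\vee}_{B/C_2}[2]$). So $\det\LL_{i_2}\cong\det(\TT_{i_2})^{-1}$ and $K_{C_2/G}|_{B/G}\cong K_{B/G}\otimes\det(\TT_{i_2})$.

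The comparison you announced would have caught this. For trivial $G$, where $\det\TT_{i_2}=\det(N_{B/C_2})^{-1}$, your formulas read $K_{C_1}|_B\cong K_B\otimes\det(N_{B/C_2})^{-1}$ and $K_{C_2}|_B\cong K_B\otimes\det(N_{B/C_2})$. These are the inverses of (\ref{eq: can1}) and (\ref{eq: can2}), and the second contradicts adjunction for $B\subset C_2$.

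With these fixed, the argument gives
\[
(K^{-1}_{C_1/G}\otimes K_{C_2/G})|_{B/G}\cong\det(\TT_{i_2})^{2},\quad\text{i.e.}\quad(K^{-1}_{C_1/G}\otimes K_{C_2/G})|_{B/G}\otimes\det(N_{B/C_2})^{2}\cong\shO_{B/G}.
\]
This is the equivariant form of Lemma \ref{lem: canlag} and exactly what is used just before Theorem \ref{thm: can}. So the proposition is correct when $\det(\TT_{i_2})$ is read as $\det(N_{B/C_2})$, which is how the paper uses it. In your convention $\det(F[1])\cong\det(F)^{-1}$, the exponent in the display should be $-2$; your derivation of $+2$ only works because of the three slips.

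Read literally in your convention, the display would force $\det(N_{B/C_2})^{4}\cong\shO_{B}$. This fails already for trivial $G$, $X=T^{\vee}\PP^3$, and $C_1,C_2$ the conormal bundles of a plane and a transverse smooth quadric, meeting cleanly over a conic $B$. There $\deg(K^{-1}_{C_1}\otimes K_{C_2})|_B=4$ while $\deg\det N_{B/C_2}=-2$.

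Your closing alternative is to deduce the statement from Lemma \ref{lem: canlag}: the classical sequences are canonical and hence $G$-equivariant, and the $\det\mathfrak{g}$ factors cancel. That is a sound proof, but it proves the $\det(N_{B/C_2})^{2}$ form.
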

\begin{proof}
    Just as in the proof of Lemma \ref{lem: canlag}, we can conclude the required equation by applying determinant to distinguished triangles \ref{seq: derexcseq} and the relative cotangent complex of $i_1$. 
\end{proof}
%Parallel to Lemma \ref{tor}, we have the following
%\begin{lemma}
%Let $C_1$ and $C_2$ be two smooth $G$-invariant Lagrangians of $X$ with a clean intersection $M$. For each $k$, 
%$$ \sh\Hdot^{-k}(\alpha_2^{*}\alpha_{1*}\shO_{C_1/G})\cong \wedge^k \TT_{B/G} $$
%\end{lemma}  
%
%
%Parallel to Proposition \ref{prop: ss}, we have the following.
%\begin{proposition}
%There is a spectral sequence
%$$E_2^{p,q}=\Hdot^q(B/G, \wedge^{p-m}\LL_{B/G}\otimes det(N_{B/C_2}^{\vee}))\Longrightarrow \Ext^{p+q}_{X\sslash  G}(\alpha_{1*}\shO_{C_1/G},\alpha_{2*}\shO_{C_2/G}),$$
%where $m=\dim C_2-\dim M$.
%\end{proposition}
%
From now on, we will use the following set-up, mentioned in the introduction. 
\begin{setup}[S]
$(X,\omega)$ is a smooth symplectic quasi-projective variety with a Hamiltonian $G$-action, where $G$ is a complex linear reductive group, and $C_1, C_2$ are $G$-invariant smooth connected Lagrangians of $(X, \omega)$ such that the scheme theoretic intersection $B:=C_1\cap C_2$ is smooth.
\end{setup}
Recall that the conormal sequence of $G$-invariant subscheme $B\subset X$ splits if the following conormal bundle sequence splits as $G$-equivariant bundles
\begin{align*}
    0\arw N^{\vee}_{B/X}\arw \Omega_X|_B \arw \Omega_B\arw 0. 
 \end{align*}

\begin{theorem}\label{mainthm}
In Set-up (S), if the conormal sequence of $B\subset X$ splits, then 
$$C_1/G\times_{X\sslash  G} C_2/G\cong \Tot_{B/G}(\LL_{B/G}[-1])$$
as derived schemes over $C_1/G\times C_2/G$.
\end{theorem}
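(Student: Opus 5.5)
The plan is to reduce the stacky statement to the non-equivariant Theorem \ref{ACH} by working $G$-equivariantly on the derived scheme level, and then to account for the extra $\mathfrak{g}$-directions coming from the moment map.

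First I rewrite the fiber product. Since $X\sslash G\cong \mu_{der}^{-1}(c)/G$ and $C_1,C_2\subset \mu^{-1}(c)$ by Lemma \ref{lem: invLag}, I have
$$C_1/G\times_{X\sslash G}C_2/G\cong (C_1\times_{\mu_{der}^{-1}(c)}C_2)/G.$$
Next I express $C_1\times_{\mu^{-1}_{der}(c)}C_2$ through $C_1\times_X C_2$. Because $\mu_{der}^{-1}(c)=X\times_{\mathfrak g^\vee}\{c\}$ and each $C_i$ maps to $\{c\}$, a pasting of fiber squares gives
$$C_1\times_{\mu_{der}^{-1}(c)}C_2\cong (C_1\times_X C_2)\times_{\{c\}\times_{\mathfrak g^\vee}\{c\}}\{c\}\cong (C_1\times_X C_2)\times_{\mathfrak g^\vee[-1]}\{0\},$$
using $\{c\}\times_{\mathfrak g^\vee}\{c\}\cong \Tot(\mathfrak g^\vee[-1])$.

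By Theorem \ref{ACH}, together with Lemma \ref{exc} ($E\cong\Omega_B$), the split conormal sequence gives
$$C_1\times_X C_2\cong \Tot_B(\Omega_B[-1]).$$
Since the splitting is $G$-equivariant, I would check that the ACH construction is functorial and hence $G$-equivariant. The equivariant splitting is exactly what is needed to build the equivariant retraction $\pi$. The map to $\mathfrak g^\vee[-1]$ is linear along the fibers: on the $(-1)$-shifted fiber directions it is the dual of the infinitesimal action $\mathfrak g\otimes\shO_B\to T_B$, coming from $d\mu_\xi=\iota_{\xi^\#}\omega$ restricted to $B$. Consequently
$$\Tot_B(\Omega_B[-1])\times_{\mathfrak g^\vee[-1]}\{0\}\cong \Tot_B\bigl([\Omega_B\to\mathfrak g^\vee][-1]\bigr),$$
because a fiber product of total spaces of linear maps over a shifted vector space is the total space of the cone. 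Finally $[\Omega_B\to\mathfrak g^\vee]=\LL_{B/G}$ (after pullback to $B$), and descending along $G$ gives $\Tot_{B/G}(\LL_{B/G}[-1])$, compatibly over $C_1/G\times C_2/G$ since every identification was made over $C_1\times C_2$.

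The main obstacle is the third step: identifying the map $C_1\times_XC_2\to \mathfrak g^\vee[-1]$ as the linear map induced by the action, rather than something with nonlinear or higher corrections. Such corrections could a priori appear in the derived structure of $\Tot_B(\Omega_B[-1])$. To handle this I would use the $\mathbb{G}_m$-scaling on the fibers, which the ACH isomorphism can be chosen to respect given the splitting, together with a weight argument: the map is determined by its linear part. Its linear part is then computed on tangent complexes, which matches the excess triangle (\ref{seq: derexcseq}), $\TT_\gamma\cong\LL_{B/G}[-2]$. That triangle serves as a consistency check on the answer.
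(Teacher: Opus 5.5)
\textbf{Verdict: there is a genuine gap, at exactly the step you flag as the main obstacle.} Your reduction is correct as far as it goes. The pasting identity $C_1\times_{\mu^{-1}_{der}(c)}C_2\cong (C_1\times_XC_2)\times_{\Tot(\mathfrak g^\vee[-1])}\{0\}$ holds. \emph{If} the composite $\Tot_B(\Omega_B[-1])\cong C_1\times_XC_2\to \Tot(\mathfrak g^\vee[-1])$ is $\Tot$ of a linear map, then the fibre product is $\Tot_B$ of the fibre $[\Omega_B\to\mathfrak g^\vee][-1]=\LL_{B/G}|_B[-1]$ and descends as you say.

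That linearity is the whole content beyond ACH, and the weight argument does not supply it. Since $\Tot(\mathfrak g^\vee[-1])=\Spec\Sym(\mathfrak g[1])$ is free, a morphism $\Tot_B(\Omega_B[-1])\to\Tot(\mathfrak g^\vee[-1])$ is classified up to homotopy by a class in $\mathfrak g^\vee\otimes\bigoplus_{p\ge 1}\Hdot^{p-1}(B,\wedge^pT_B)$ (or its $G$-equivariant analogue). Your tangent-complex computation sees only the $p=1$ component. The components with $p\ge 2$ are global classes that no local or first-order check can detect.

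A weight argument would need this morphism to be $\GG_m$-equivariant for fibre scaling on both sides. In Set-up (S), however, none of $X$, $\omega$, $\mu$, $C_1$, $C_2$ carries a $\GG_m$-action. If you transport the scaling to $C_1\times_XC_2$ through the ACH isomorphism, equivariance of the moment-map morphism \emph{is} the vanishing of the $p\ge 2$ components, so the argument is circular. It works only in special cases such as $X=T^{\vee}M$, $C_1=C_2=M$, where fibre scaling on $T^\vee M$ makes $\omega$ and $\mu$ homogeneous.

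Relatedly, ACH as a black box gives only \emph{some} isomorphism, determined up to automorphisms of $\Tot_B(\Omega_B[-1])$ over $B$. Composing with a nonlinear one generally creates higher components. So you must work with a specific construction, which you would also need for $G$-equivariance.

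The paper never has to identify a global homotopy class. Following Arinkin--C\u{a}ld\u{a}raru, it applies the construction directly to the diagonal of $T=\mu^{-1}_{der}(c)$ and $U=C_1\times C_2$. The identification $\Delta^*\Delta_*\shO\cong\Sym(\LL[1])$ (Ben-Zvi--Nadler), together with the equivariant splitting, gives an honest global morphism $\varphi:\Tot(\TT_\gamma[1])\to C_1\times_{\mu^{-1}_{der}(c)}C_2$, with $\TT_\gamma[1]\cong\LL_{B/G}[-1]$. It then shows $\varphi$ is an equivalence locally by Koszul--Tate resolutions. Writing the components of $\mu-c$ as $a_i=\sum_jc_{ij}g_j$ produces degree $-2$ generators $\epsilon_i$ with $d\epsilon_i=\sum_jc_{ij}e_j$, i.e.\ the $\mathfrak g\to E^\vee$ part of $\TT_{B/G}$. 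Being an equivalence is local, so the higher classes never enter.

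Your decomposition can be rescued in the same spirit, but via naturality rather than weights:
\begin{enumerate}
\item Build $C_1\times_XC_2\cong\Tot_B(E[-1])$ by that recipe over $X$: a $G$-equivariant splitting $s:E\to T_X|_B$ followed by the HKR identification $X\times_{X\times X}X\cong\Tot_X(T_X[-1])$.
\item Use naturality of HKR for $\mu:X\to\mathfrak g^\vee$. The composite to $\Tot(\mathfrak g^\vee[-1])$ is then $\Tot$ of the linear map $d\mu\circ s$.
\item Since $\mu\equiv c$ on each $C_i$, $d\mu|_B$ kills $TC_1|_B+TC_2|_B$. Hence $d\mu\circ s$ is the map $E\cong\Omega_B\to\mathfrak g^\vee$ induced by $d\mu$, which is up to sign the dual of the infinitesimal action, independent of $s$.
\end{enumerate}
That naturality argument is what your key step requires.
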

\begin{proof}
Following the idea of \cite{arinkin2012self} and \cite{arinkin2019formality}, we consider the following commutative diagram for derived schemes. 
\begin{equation*}
   \begin{tikzcd}
    W 
    \arrow[drr, bend left,"p"] \arrow[ddr, bend right, "q"] \arrow[dr, "\gamma"] &&\\
    & T\times_S U\arrow[r,"\tilde{j}"] \arrow[d,"\tilde{i}"]\arrow[ul, dotted, shift left=2, "\pi"]
& T \arrow[d, "i"] \\
 & U \arrow[r, "j"]
& S.
   \end{tikzcd}
  \end{equation*}
There is a distinguished triangle of tangent complexes:
$$\TT_{\gamma}\arw \TT_q\arw \gamma^{*}\TT_{\tilde{i}}\cong p^{*}\TT_i\xrightarrow{[1]}$$
Assume that there is a splitting $\TT_{\gamma}[1]\arw p^{*}\TT_i$ which is a right inverse to the connecting morphism $p^{\vee}\TT_i\arw \TT_{\gamma}[1]$. 
Assume also that  $T\times_S T\cong \Tot_S(\TT_i)$. Then we have a morphism
$$ \varphi: \Tot(\TT_{\gamma}[1])\arw \Tot(p^{*}\TT_i)\cong T\times_S T\times_T W=T\times_S W\arw T\times_S U,$$
where the last arrow comes from the morphism $q$. 
To check $\varphi$ is an isomorphism, it is sufficient to check locally. 

Now we apply the idea to current case. Let $i$ be the diagonal morphism $\Delta: T=\mu_{der}^{-1}(c) \arw S=\mu_{der}^{-1}(c)$, $U$ be $C_1\times C_2$ and $j$ be the morphism induced by $\alpha_1, \alpha_2$. Then the derived fiber product $T\times_S U\cong C_1\times_{\mu_{der}^{-1}(c)} C_2$ and $W=M$. It is known that $\Delta^{*}\Delta_* \shO_{\mu_{der}^{-1}(c)}\cong \Sym (\LL_{\mu_{der}^{-1}(c)}[1])=\Sym (\LL_{\Delta})$ by \cite[Proposition 1.1]{ben2012loop}. The distinguished triangle is
$$ \TT_{\gamma}\arw (TC_1+TC_2)|_B[-1] \arw \TT_{\mu_{der}^{-1}(c)}\cong [T_X|_B\arw \mathfrak{g}^{\vee}][-1] \xrightarrow{[1]}$$
and the splitting morphism is induced by the splitting $\Omega_M\arw TX|_B$. Hence, there is a morphism
$\varphi: \Tot(\LL_{B/G}[1])\arw C_1\times_{\mu_{der}^{-1}(c)} C_2$. Then $\varphi$ is isomorphic by a local calculation as shown below.   

We may assume that all $X=\Spec\,R, C_1=\Spec R/I, C_2=\Spec R/J, M=\Spec \frac{R}{I+J}$ are regular affine schemes, where $R$ is a regular local $\CC$-algebra, both $I$ and $J$ are ideals of $R$, generated by regular sequences. As an $R/J$-algebra, $\frac{R}{I+J}=\dfrac{R/J}{I+J/J}$. The regularity of $M$ implies that the image of $I$ in $R/J$ is generated by a regular sequence $g_1, g_2, \cdots, g_k$, which has a lift to a regular sequence (denoted still) $g_1, g_2, \cdots, g_k$. Consider the quotient ring $R/(g_1, \cdots, g_k)$, and its ideal $I/(g_1,\cdots, g_k)$. By \cite{vasconcelos1967ideals}, $I/(g_1,\cdots, g_k)$ is generated by a regular sequence $(g_{k+1}, g_2,\cdots, g_n)$, and hence $I$ is generated by the regular sequence $(g_1, \cdots, g_n)$. Then $R/I$ is quasi-isomorphic to the Koszul dg algebra $R[e_1, e_2, \cdots, e_{n}]$, where $deg(e_i)=-1, d(e_i)=g_i$. Now $R/I\otimes_R R/J\cong R/J[e_1, \cdots, e_n]\cong \frac{R}{I+J}[e_{k+1}, \cdots, e_n]$. Note that $d(e_i)=0$, and hence $R/I\otimes_R R/J\cong \Sym_{\frac{R}{I+J}}(E^{\vee}[1])$, where $E^{\vee}$ is the free $\frac{R}{I+J}$-module generated by $e_{k+1},\cdots, e_n$ which is dual excess bundle on $\Spec(\frac{R}{I+J})$.

$\mu^{-1}(c)=\Spec R/A,$ s and $A\subset I\cap J$
The moment map $\mu$ gives a sequence of elements $a_1, a_2, \cdots, a_l$ in $R$ ($l=\dim G$). Consider the ideal $A=(a_1, \cdots, a_l)$. Lemma \ref{lem: invLag} implies that $A\in I\cap J$, and we can write each $a_i=\sum_{j}c_{ij}g_j$. Note that $\mu_{der}^{-1}(c)$ is an affine derived scheme determined by the Koszul dg algebra $R[f_1, \cdots, f_k]$, where $deg (f_i)=-1, d(f_i)=a_i$. Then the Tate algebra $R[e_1, \cdots, e_n,\epsilon_1, \cdots, \epsilon_l, f_1, \cdots, f_k]$ is a dg resolution of $R/I$ as $R[f_1, \cdots, f_k]$-algebra, where $deg(\epsilon_i)=-2, d(\epsilon_i)=\sum_j c_{ij}e_j-f_i$. 
 
 Then $R/I\otimes_{R[f_1, \cdots, f_k]} R/J=\Sym_{\frac{R}{I+J}}(E^{\vee}[1])[\epsilon_1, \cdots, \epsilon_t]$, where $d(\epsilon_i)=\sum_j c_{ij}e_j$, which means that $R/I\otimes_{[f_1, \cdots, f_k]} R/J=\Sym_{\frac{R}{I+J}}([\mathfrak{g}\arw E^{\vee}][1])=\Tot(\TT_{\gamma}[1])$. Therefore, the morphism $\varphi$ is an isomorphism. 
\end{proof}

%\begin{remark}
 %   Theorem \ref{mainthm} gives a canonical $(-1)$-shifted symplectic structure on $C_1/G\times_{X\sslash  G} C_2/G$. 
%\end{remark}
\subsection{Ext Groups of Invariant Lagrangians and Equivariant Cohomology}
% If $C$ is a smooth proper $G$-invariant Lagrangian of $X$ such that the Lagrangian pair $(C, C)$ satisfies Assumption Set-up (S), then 
% $$\shO_{C/G}\otimes_{X\sslash  G}\shO_{C/G}\cong \Tot_{C/G}(\LL_{C/G}[-1]),$$
% or equivalently, there is a quasi-isomorphism
% $$\alpha^{*}\alpha_*\shO_{C/G}\cong \bigoplus_{p\geq 0}\bigwedge^p \TT_{C/G}[p],$$
% where $\alpha: C/G\hookrightarrow X/G$ is the embedding and $\bigwedge^p(-)$ denotes the derived $p$-th exterior power.
% By combining this with the adjunction pair $\alpha^{*}\dashv \alpha_{*}$, we can obtain the following
% \begin{align*}
% \Ext_{X\sslash  G}^{\bullet}(\alpha_{*}\shO_{C/G},\alpha_{*}\shO_{C/G})\cong \Ext_{C/G}^{\bullet}(\alpha^{*}\alpha_{*}\shO_{C/G},\shO_{C/G})\cong \bigoplus_{p\geq 0} \Hdot^{\bullet-p}(C/G, \bigwedge^p \LL_{C/G}),
% \end{align*}
% which calculates the Hodge cohomology of the stack $B/G$.
Let $(C_1, C_2)$ be a pair of $G$-invariant Lagrangians of $X$ in Set-up (S) and  the conormal sequence of $B\subset X$ splits, then
Theorem \ref{mainthm} gives 
$$C_1/G\times_{X\sslash  G} C_2/G\cong \Tot_{B/G}(\LL_{B/G}[-1])$$
over derived stacks $C_1/G\times C_2/G$. Thus the natural morphism $\pi: \Tot_{B/G}(\LL_{B/G}[-1])\arw B/G$ fits into  (\ref{diagram: equi}) such that $\beta_1=i_1\circ \pi$ and $\beta_2=i_2\circ \pi$. Also
$$\pi_*\shO_{\Tot_{B/G}(\LL_{B/G}[-1])}\cong \bigoplus_{p\geq 0}\bigwedge^p \TT_{B/G}[p],$$
where $\bigwedge^p(-)$ denotes the derived $p$-th exterior power.
Then we can derive as in subsection \ref{subsec: derint} by the adjunction of pullback-pushforward and base-change property. 
\begin{align*}
    &\Ext_{X\sslash G}^{\bullet}(\alpha_{1*}\shO_{C_1/G}, \alpha_{2*}\shO_{C_2/G})\cong \Ext_{C_2/G}^{\bullet}(\alpha_2^{*}\alpha_{1*}\shO_{C_1/G}, \shO_{C_2/G})
    \cong \Ext_{C_2/G}^{\bullet}(\beta_{2*}\beta_1^{*}\shO_{C_1/G}, \shO_{C_2/G})\\
    &\cong \Ext_{C_2/G}^{\bullet}(i_{2*}\pi_*(\shO_{\Tot_{B/G}(\LL_{B/G}[-1])}), \shO_{C_2/G})\cong \Ext_{B/G}^{\bullet}(\bigoplus_{p\geq 0}\bigwedge^p\TT_{B/G}[p], i_2^{!}\shO_{C_2/G}).
\end{align*}
The last isomorphism comes from the Grothendieck duality,
and hence, 
\begin{align}
    \Ext^{\bullet}_{X\sslash  G}(\shO_{C_1/G},\shO_{C_2/G})\cong\bigoplus_{p+q+m=\bullet}
\Hdot^q(B/G, \wedge^{p-m}\LL_{B/G}\otimes \det(N_{B/C_2}^{\vee})),
\end{align}
where $m=\codim(B, C_2)$.

Similarly, if $F_1$ and $F_2$ are $G$-equivariant perfect complexes on $C_1$ and $C_2$ respectively, then
\begin{align}\label{eq: generalexteq}
    \Ext^{\bullet}_{X\sslash  G}(F_1,F_2)\cong\bigoplus_{p+q+m=\bullet}
\Hdot^q(B/G, (F_1^{\vee}\otimes F_2)|_{B/G}\otimes \wedge^{p-m}\LL_{B/G}\otimes \det(N_{B/C_2}^{\vee})),
\end{align}
where $m=\codim(B, C_2)$.
In particular, when $C_1=C_2=C$, then 
\begin{align*}
\Ext_{X\sslash  G}^{\bullet}(\shO_{C/G},\shO_{C/G})\cong  \bigoplus_{p\geq 0} \Hdot^{\bullet-p}(C/G, \bigwedge^p \LL_{C/G}),
\end{align*}
which calculates the Hodge cohomology of the quotient stack $B/G$.

If, moreover, the Hodge-to-de Rham spectral sequence for the stack $B/G$ degenerates at the $E_1$ page, then Hodge cohomology calculates the equivariant cohomology $\Hdot^{\bullet}_G(B, \CC)$. The degeneracy of the Hodge to de Rham spectral sequence can be guaranteed by the condition that $B/G$ is \emph{cohomologically Hodge spreadable} in \cite{kubrak2022hodge}. For example, if $h^0(B, \shO)^G<\infty$ and $B$ is proper over affine, then $B/G$ is cohomologically Hodge spreadable (see \cite[Example 3.1.6]{kubrak2022hodge}) . Therefore, we have 
\begin{corollary}\label{symredext}
    Let $C\hookrightarrow X$ be a $G$-invariant smooth Lagrangian of $X$ whose conormal sequence splits. Assume that $h^0(C, \shO)^G<\infty$ and $C$ is proper over affine, then 
    $$ \Ext_{X\sslash  G}^{\bullet}(\shO_{C/G}, \shO_{C/G})\cong \Hdot^{\bullet}_G(C,\CC).$$
\end{corollary}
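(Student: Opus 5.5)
The argument specializes the Ext computation derived just above, with $C_1=C_2=C$, and then identifies stacky Hodge cohomology with equivariant cohomology. When $C_1=C_2=C$, the intersection is $B=C$ itself. This is smooth, so Set-up (S) holds, and the splitting hypothesis is exactly the splitting of the conormal sequence of $C\subset X$. Theorem \ref{mainthm} then gives $C/G\times_{X\sslash G}C/G\cong \Tot_{C/G}(\LL_{C/G}[-1])$ over $C/G\times C/G$. Here $\codim(B,C_2)=0$ and $\det(N_{B/C_2})\cong\shO$, so no twist appears. The base change, adjunction and Grothendieck duality chain displayed above then yields
\begin{align*}
\Ext_{X\sslash G}^{\bullet}(\shO_{C/G},\shO_{C/G})\cong \bigoplus_{p\geq 0}\Hdot^{\bullet-p}(C/G,\wedge^p\LL_{C/G}),
\end{align*}
i.e. the Hodge cohomology of the quotient stack $C/G$. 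I would check one point in this step: the derived exterior powers $\wedge^p\LL_{C/G}$ of the two-term complex $[\Omega_C\to\mathfrak{g}^{\vee}]$ are the objects whose hypercohomology gives the $E_1$ terms of the Hodge filtration on the de Rham complex of $C/G$. This matches the conventions of \cite{kubrak2022hodge}.

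The second step compares with de Rham cohomology. By \cite{kubrak2022hodge}, the de Rham cohomology of the smooth quotient stack $C/G$, defined via the Hodge-completed derived de Rham complex, agrees with Betti cohomology of the analytic stack. That Betti cohomology is $\Hdot^\bullet_G(C,\CC)$, the cohomology of the homotopy quotient $EG\times_G C$. The Hodge-to-de Rham spectral sequence $E_1^{p,q}=\Hdot^q(C/G,\wedge^p\LL_{C/G})\Rightarrow \Hdot^{p+q}_{dR}(C/G)$ therefore abuts to $\Hdot^\bullet_G(C,\CC)$.

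The third step is degeneration, which I expect to be the main obstacle. Our hypotheses are that $C$ is proper over affine and $h^0(C,\shO)^G<\infty$. By \cite[Example 3.1.6]{kubrak2022hodge} these make $C/G$ cohomologically Hodge-spreadable, and hence the spectral sequence degenerates at $E_1$. The remaining subtlety is finiteness. Degeneration gives an isomorphism of associated graded pieces. To get an honest isomorphism we need the Hodge filtration on each $\Hdot^n_G(C,\CC)$ to be finite, exhaustive and complete, so that we can split it over $\CC$. Finiteness holds because only $p\leq n+\dim G$ can contribute in degree $n$, since $\wedge^p\LL_{C/G}$ sits in degrees $[0,p]$ cohomologically with shifts bounded by the homological degree. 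Hence each graded piece is finite-dimensional by the spreadability condition, and a choice of splitting gives $\Ext^\bullet_{X\sslash G}(\shO_{C/G},\shO_{C/G})\cong\Hdot^\bullet_G(C,\CC)$.
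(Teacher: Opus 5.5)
Your proposal is correct and follows the paper's route. You specialize the Ext computation from Theorem \ref{mainthm} to $B=C_1=C_2=C$ (so $m=0$ and there is no twist), obtaining the Hodge cohomology $\bigoplus_p \Hdot^{\bullet-p}(C/G,\wedge^p\LL_{C/G})$, and then invoke \cite[Example 3.1.6]{kubrak2022hodge} for $E_1$-degeneration of the Hodge-to-de Rham spectral sequence, which abuts to $\Hdot^\bullet_G(C,\CC)$.

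Your extra finiteness discussion, which the paper leaves implicit, is fine, with two small corrections. The sharper bound is $p\le n$, because $\wedge^p\LL_{C/G}$ is concentrated in nonnegative degrees. Also, no finite-dimensionality is needed to split a finite filtration of a $\CC$-vector space.
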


% Theorem \ref{mainthm} implies that 
% $$\alpha_2^{*}\alpha_{1*}\shO_{C_1/G}\cong \Sym(\TT_{B/G}[1])=\bigoplus_{p\geq 0}\wedge^p \TT_{B/G},$$

% By combining this with the adjunction pair $\alpha_2^{*}\dashv \alpha_{2*}$, we can obtain the following

% $$ \Ext^{\bullet}_{X\sslash  G}(\alpha_{1*}\shO_{C_1/G},\alpha_{2*}\shO_{C_2/G})\cong\bigoplus_{p+q+\codim(M, C_2)=\bullet}
% \Hdot^q(B/G, \wedge^{p-\codim(M, C_2)}\LL_{B/G}\otimes det(N_{B/C_2}^{\vee})).$$

Mimicking the covering trick in the proof of Proposition \ref{prop: Ext}, we have:
\begin{corollary}\label{symredext2} In Set-up (S), and assume that the conormal sequence of $B\subset X$ splits. Then 
   $$ \Ext^{\bullet}_{X\sslash  G}(\shO_{C_1/G},\shO_{C_2/G})\cong\bigoplus_{p+q+m=\bullet}
\Hdot^q(B/G, \wedge^{p-m}\LL_{B/G}\otimes \det(N_{B/C_2}^{\vee})), $$
where $m=\codim(B, C_2)$. 
If furthermore $\det(N_{B/C_2})$ is torsion in $\Pic_G(B)$, $B$ is proper over affine, and $h^0(B, \shO)^G<\infty$, then 
$$\Ext^{\bullet}_{X\sslash  G}(\shO_{C_1/G},\shO_{C_2/G})\cong \Hdot^{\bullet-m}_G(B, \det(N_{B/C_2})_{\delta}).$$

\end{corollary}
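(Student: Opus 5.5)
The first isomorphism has essentially been derived in the discussion preceding Corollary \ref{symredext}, and I would simply assemble it. By Theorem \ref{mainthm}, $C_1/G\times_{X\sslash G} C_2/G\cong \Tot_{B/G}(\LL_{B/G}[-1])$ over $C_1/G\times C_2/G$. Hence there is a projection $\pi$ with $\beta_k=i_k\circ\pi$ and
\[
\pi_*\shO\cong\bigoplus_{p\ge0}\wedge^p\TT_{B/G}[p].
\]
Base change gives $\alpha_2^*\alpha_{1*}\shO_{C_1/G}\cong i_{2*}\pi_*\shO$. Adjunction together with Grothendieck duality for the regular embedding $i_2$ gives $i_2^!\shO_{C_2/G}\cong\det(N_{B/C_2})[-m]$. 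Dualizing $\wedge^p\TT_{B/G}$ to $\wedge^p\LL_{B/G}$ then yields the stated direct sum. I would keep careful track of the twist: the determinant appearing is $\det(N_{B/C_2})$, the same as in Proposition \ref{prop: ssd}.

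For the second statement, I first note that
\[
\bigoplus_{p+q=n}\Hdot^q\bigl(B/G,\wedge^p\LL_{B/G}\otimes\mathfrak{L}\bigr)
\]
is the $E_1$ page of the Hodge-to-de Rham spectral sequence of $B/G$ with coefficients in the flat line bundle $\mathfrak{L}=\det(N_{B/C_2})$, which converges to $\Hdot^n_G(B,\mathfrak{L}_\delta)$. Here the torsion isomorphism $\mathfrak{L}^{\otimes n}\cong\shO$ gives $\mathfrak{L}$ the canonical flat connection. So it suffices to prove degeneration. Mimicking Proposition \ref{prop: Ext}, I would fix a $G$-equivariant trivialization $\mathfrak{L}^{\otimes n}\cong\shO_B$; this is possible since $\mathfrak{L}$ is torsion in $\Pic_G(B)$. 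From it I would build the $n$-cyclic cover $c:\widetilde B=\Spec_B\bigl(\bigoplus_{k=0}^{n-1}\mathfrak{L}^{-k}\bigr)\to B$. It carries a $G$-action, and $c$ is finite étale and $G$-equivariant. Consequently $\widetilde B/G\to B/G$ is finite étale, $c^*\LL_{B/G}\cong\LL_{\widetilde B/G}$, and both $c_*\shO_{\widetilde B}$ and $c_*\CC$ decompose into the isotypic pieces $\mathfrak{L}^{\otimes k}$ under the $\mu_n$-action.

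The next step is to check that $\widetilde B$ inherits the hypotheses. Since $c$ is finite, $\widetilde B$ is proper over the same affine base. Moreover $h^0(\widetilde B,\shO)^G=\bigoplus_k h^0(B,\mathfrak{L}^{k})^G$, and this is finite because each summand is a finite module over $h^0(B,\shO)^G$ via properness over affine and reductivity of $G$. This is the point I expect to need the most care: one must show that $G$-invariants of global sections of a coherent sheaf are finite-dimensional, given only that $h^0(B,\shO)^G$ is. I would argue via the affinization $B\to\Spec h^0(B,\shO)$, which is proper. Hilbert's finiteness then shows that $\Gamma(B,\mathfrak{L}^k)^G$ is a finite module over $\Gamma(B,\shO)^G$, which is finite-dimensional. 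Granting this, \cite[Example 3.1.6]{kubrak2022hodge} shows that $\widetilde B/G$ is cohomologically Hodge spreadable, so its Hodge-to-de Rham spectral sequence degenerates. That spectral sequence is $\mu_n$-equivariant and splits as the direct sum over $k$ of the twisted spectral sequences on $B/G$. Hence each summand degenerates, in particular the one with $k=1$.

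Finally, this gives
\[
\bigoplus_{p+q=n}\Hdot^q(B/G,\wedge^p\LL_{B/G}\otimes\mathfrak{L})\cong\Hdot^n_G(B,\mathfrak{L}_\delta).
\]
Substituting this into the first isomorphism, with the shift by $m$, gives $\Hdot^{\bullet-m}_G(B,\det(N_{B/C_2})_\delta)$. The dimension-counting argument of Proposition \ref{prop: Ext} is unnecessary here, and in any case unavailable since the groups may be infinite-dimensional. Instead, degeneration passes to direct summands of a $\mu_n$-equivariant decomposition of spectral sequences, which is what the argument above uses.
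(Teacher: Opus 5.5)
Your proposal is correct and follows the paper's route. The first isomorphism comes from Theorem \ref{mainthm} together with base change, adjunction and Grothendieck duality, and the second from the cyclic-cover trick on $\widetilde B/G$ combined with Kubrak--Prikhodko degeneration. Two details differ. Your $\mu_n$-isotypic splitting of the spectral sequence replaces the paper's dimension count. Your check that $\widetilde B$ is still proper over affine with $h^0(\widetilde B,\shO)^G<\infty$ supplies a step the paper leaves implicit.

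Your bookkeeping of the twist is also the right one. Since $i_2^!\shO_{C_2/G}\cong\det(N_{B/C_2})[-m]$, the summands are $\Hdot^q(B/G,\wedge^{p}\LL_{B/G}\otimes\det(N_{B/C_2}))$ with $p+q+m=\bullet$. The $\det(N_{B/C_2}^{\vee})$ and the $\wedge^{p-m}$ in the displayed statement are slips. Only your version is consistent with the second display $\Hdot^{\bullet-m}_G(B,\det(N_{B/C_2})_{\delta})$.
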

\begin{definition}[Equivariant spin]
    We call a $G$-invariant subvariety $C\subset X$ is \emph{equivariant spin} if the canonical bundle $K_{C/G}$ has a square root $K_{C/G}^{1/2}$ in $\Pic_G(C)$.
\end{definition}  
Then Proposition \ref{prop: eqcanlag} implies that $K_{C_1/G}^{-1/2}|_{B/G}\otimes K_{C_2/G}^{1/2}|_{B/G}\otimes \det(N_{B/C_2})$ is 2-torsion in $B/G$ if both $C_1$ and $C_2$ are equivariant spin. Then by covering tricks,  
the RHS of (\ref{eq: generalexteq}) calculates the local equivariant cohomology of $B$ when $F_i=K_{C_i/G}^{1/2} (i=1,2)$. 
 \begin{theorem}\label{thm: can}
    In Set-up (S), assume that the conormal sequence of $B\subset X$ splits. Assume also $K_{C_i/G}^{1/2}$ exist square $(i=1,2)$. If $h^0(B, \shO)^G<\infty$ and $B$ is proper over affine, then
    $$\Ext^{\bullet}_{X\sslash G}(K^{1/2}_{C_1/G}, K^{1/2}_{C_2/G})=\Hdot^{\bullet-\codim(B, C_2)}_G(B, \mathfrak{E}),$$
    where $\mathfrak{E}$ is the $G$-equivariant $2$-torsion $\CC$-local system determined by $K_{C_1/G}^{-1/2}|_{B/G}\otimes K_{C_2/G}^{1/2}|_{B/G}\otimes \det(N_{B/C_2})$.
\end{theorem}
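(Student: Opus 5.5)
The argument runs parallel to the proof of Corollary \ref{symredext2}, with $\shO_{C_i/G}$ replaced by the half-canonical bundles. First, take $F_i=K^{1/2}_{C_i/G}$ in the general formula (\ref{eq: generalexteq}), which was derived from Theorem \ref{mainthm} by adjunction, base change along the fibration $\pi:\Tot_{B/G}(\LL_{B/G}[-1])\arw B/G$, and Grothendieck duality for $i_2$. This gives
\begin{align*}
\Ext^{\bullet}_{X\sslash G}(K^{1/2}_{C_1/G},K^{1/2}_{C_2/G})\cong\bigoplus_{p+q+m=\bullet}\Hdot^q\bigl(B/G,\ \mathfrak{M}\otimes\wedge^{p-m}\LL_{B/G}\bigr),
\end{align*}
where $m=\codim(B,C_2)$ and $\mathfrak{M}:=K^{-1/2}_{C_1/G}|_{B/G}\otimes K^{1/2}_{C_2/G}|_{B/G}\otimes\det(N_{B/C_2})$. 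Here I would check the determinant twist coming from $i_2^!$ against the convention of Corollary \ref{symredext2}, so that the twist is the line bundle named in the statement.

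Next, Proposition \ref{prop: eqcanlag} says $(K^{-1}_{C_1/G}\otimes K_{C_2/G})|_{B/G}\otimes\det(\TT_{i_2})^{2}\cong\shO_{B/G}$. Since $\TT_{i_2}=N_{B/C_2}[-1]$, its determinant is $\det(N_{B/C_2})^{\pm1}$, and squaring removes the sign. Taking the chosen square roots therefore gives $\mathfrak{M}^{\otimes 2}\cong\shO_{B/G}$ in $\Pic_G(B)$. A $G$-equivariant trivialization of $\mathfrak{M}^{\otimes2}$ defines a $G$-equivariant flat structure, and this is the local system $\mathfrak{E}=\mathfrak{M}_\delta$. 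It also defines an equivariant double cover $c:\widetilde B\arw B$ with $c_*\shO_{\widetilde B}\cong\shO_B\oplus\mathfrak{M}$ and $c_*\CC\cong\CC\oplus\mathfrak{E}$. Because $c$ is étale, $c^*\LL_{B/G}\cong\LL_{\widetilde B/G}$.

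Finally, I would run the covering trick on the stack. The cover $\widetilde B$ is finite over $B$, so it is again proper over affine with $h^0(\widetilde B,\shO)^G<\infty$. By \cite[Example 3.1.6]{kubrak2022hodge}, $\widetilde B/G$ is cohomologically Hodge spreadable, so its Hodge-to-de Rham spectral sequence degenerates and $\bigoplus_{p+q=\bullet}\Hdot^q(\widetilde B/G,\wedge^p\LL)\cong\Hdot^\bullet_G(\widetilde B,\CC)$. Pushing forward along $c$ splits both sides into their $\shO$ and $\mathfrak{M}$ parts (respectively their $\CC$ and $\mathfrak{E}$ parts). The twisted Hodge-to-de Rham spectral sequences for $B/G$ with coefficients $\CC$ and $\mathfrak{E}$ are the $\pm1$-eigenspaces of the deck involution acting on the spectral sequence of $\widetilde B/G$. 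Degeneration upstairs therefore forces degeneration of each summand, so
\begin{align*}
\bigoplus_{p+q=\bullet}\Hdot^q(B/G,\mathfrak{M}\otimes\wedge^p\LL_{B/G})\cong\Hdot^\bullet_G(B,\mathfrak{E}).
\end{align*}
Combining this with the first step and the shift by $m$ gives the theorem.

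The main obstacle is this last step. The groups may be infinite-dimensional, since $\Hdot^\bullet_G$ is unbounded, so the dimension-counting version of the covering trick used for Proposition \ref{prop: Ext} does not apply. I would replace it with the deck-involution eigenspace decomposition, which works degree by degree and needs no finiteness. I would also need to confirm that the Kubrak–Prikhodko degeneration is compatible with this decomposition.
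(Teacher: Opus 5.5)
Your proposal is correct in substance and follows the paper's route: specialize (\ref{eq: generalexteq}) to $F_i=K^{1/2}_{C_i/G}$, use the equivariant canonical-bundle relation to see that the twist $\mathfrak{M}$ is $2$-torsion, and run the covering trick on a $G$-equivariant double cover with the Kubrak--Prikhodko degeneration upstairs; your deck-involution eigenspace splitting is a clean substitute for the paper's dimension count, though that count would also work degree by degree here, since under these hypotheses each $\Hdot^q(B/G,\wedge^p\LL_{B/G}\otimes\mathfrak{M})$ and each $\Hdot^n_G(B,\mathfrak{E})$ is finite-dimensional. Two small fixes are needed: ``squaring removes the sign'' is false, because $\det(\TT_{i_2})^{2}=\det(N_{B/C_2}[-1])^{2}=\det(N_{B/C_2})^{-2}$, so obtain $\mathfrak{M}^{\otimes2}\cong\shO_{B/G}$ instead by running the proof of Lemma \ref{lem: canlag} $G$-equivariantly (the $\det\mathfrak{g}$ factors in $K_{C_1/G}$ and $K_{C_2/G}$ cancel); and since $i_2^!(-)=i_2^*(-)\otimes\det(N_{B/C_2})[-m]$, your first display should have $\wedge^{p}\LL_{B/G}$ in place of $\wedge^{p-m}\LL_{B/G}$, because the indexing inherited from (\ref{eq: generalexteq}) counts $m$ twice and would give a shift of $2m$ rather than $m$.
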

\subsection{Cotangent Bundle Cases}
Given any smooth quasi-projective $G$-variety $M$, its cotangent bundle $X=T^{\vee}M$ has a natural Hamiltonian $G$-action, and its moment map $\mu: X\arw \mathfrak{g^{\vee}}$ can be described explicitly:
$$\mu_{\xi}(p, q)=\langle q, \xi^{\#}(p)\rangle,$$
where $p\in M, q\in T^{\vee}_pM$, and $\xi^{\#}(p)$ is the vector field on $M$ generated by $\xi\in \mathfrak{g}$ at $p$. 
\subsubsection{Zero section Lagrangians}
The zero section $M\subset X$ is always a $G$-invariant Lagrangian whose moment map value equals $0$, and the conrmal sequence of $M\subset X$ splits. 
\begin{corollary}\label{cor:eqExt}
    The $G$-equivariant Ext group of the zero section by itself is isomorphic to the Hodge cohomology of the quotient stack: 
\begin{align}\label{GEXT=H1}
    \Ext_{T^{\vee}M\sslash  G}^{\bullet}(\shO_{M/G}, \shO_{M/G})\cong \bigoplus_{p+q=\bullet}\Hdot^{q}(M/G, \wedge^p \LL_{M/G}).
\end{align}
If $h^0(M, \shO)^G<\infty$ and $M$ is proper over affine, then
\begin{align}\label{GEXT=H2}
    \Ext_{T^{\vee}M\sslash  G}^{\bullet}(\shO_{M/G}, \shO_{M/G})\cong \Hdot^{\bullet}_G(M, \CC),
\end{align}
    \end{corollary}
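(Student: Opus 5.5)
The plan is to specialize the general machinery (Theorem \ref{mainthm} and the computation preceding Corollary \ref{symredext}) to $C_1=C_2=M$, the zero section of $X=T^{\vee}M$.

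\emph{Step 1: check Set-up (S).} By the explicit formula $\mu_{\xi}(p,q)=\langle q,\xi^{\#}(p)\rangle$, the zero section lies in $\mu^{-1}(0)$, and it is $G$-invariant because the induced action on $T^{\vee}M$ is linear on fibres. It is a smooth Lagrangian, and $B=M\cap M=M$ is smooth. The conormal sequence $0\arw N^{\vee}_{M/X}\arw \Omega_X|_M\arw \Omega_M\arw 0$ has a canonical $G$-equivariant splitting coming from $TX|_M\cong TM\oplus T^{\vee}M$: the vertical tangent space is the fibre, and the projection $\pi\colon X\arw M$ is $G$-equivariant, which gives $d\pi$ as the splitting. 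So all hypotheses hold with $c=0$.

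\emph{Step 2: compute the Ext group.} Theorem \ref{mainthm} gives
\[
M/G\times_{T^{\vee}M\sslash G} M/G\cong \Tot_{M/G}(\LL_{M/G}[-1]).
\]
I then run the adjunction and base-change chain displayed before Corollary \ref{symredext}. Here $m=\codim(B,C_2)=0$, so $i_2=\mathrm{id}$, $i_2^{!}\shO=\shO$ and $\det(N_{B/C_2})=\shO$. The chain then yields
\[
\Ext^{\bullet}\cong \bigoplus_p \Ext^{\bullet}_{M/G}\bigl(\wedge^p\TT_{M/G}[p],\shO\bigr)=\bigoplus_{p+q=\bullet}\Hdot^q\bigl(M/G,\wedge^p\LL_{M/G}\bigr),
\]
which is (\ref{GEXT=H1}). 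Dualizing $\wedge^p\TT_{M/G}$ to $\wedge^p\LL_{M/G}$ is legitimate because $\LL_{M/G}=[\Omega_M\arw\mathfrak{g}^{\vee}]$ is perfect.

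\emph{Step 3: pass to equivariant cohomology.} For (\ref{GEXT=H2}), I invoke the Hodge-to-de Rham degeneration for $M/G$. By \cite[Example 3.1.6]{kubrak2022hodge}, the hypotheses $h^0(M,\shO)^G<\infty$ and $M$ proper over affine make $M/G$ cohomologically Hodge spreadable. Hence the Hodge cohomology $\bigoplus_{p+q}\Hdot^q(M/G,\wedge^p\LL_{M/G})$ is identified with the de Rham cohomology of the stack, which equals $\Hdot^{\bullet}_G(M,\CC)$. Equivalently, this is Corollary \ref{symredext} applied with $C=M$.

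The main obstacle is Step 3. One needs the identification of the (derived) de Rham cohomology of $M/G$ with $\Hdot^{\bullet}_G(M,\CC)$, and one needs the Hodge filtration pieces to use exactly $\wedge^p\LL_{M/G}$ with the grading conventions matching ours. I would take both from \cite{kubrak2022hodge} as black boxes, checking only that the shifts $[p]$ agree. Steps 1 and 2 are routine.
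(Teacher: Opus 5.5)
Your proposal is correct and follows the paper's own route. The paper obtains the corollary by specializing Theorem \ref{mainthm} and its adjunction/base-change computation of the self-Ext group to $C_1=C_2=B=M$ (moment value $0$, $m=0$, $i_2=\mathrm{id}$, conormal sequence split by $d\pi$), and then applying the Kubrak--Prikhodko Hodge-to-de Rham degeneration exactly as in Corollary \ref{symredext}. Your added details, the $G$-equivariance of the splitting and the duality of derived exterior powers of the perfect complex $\LL_{M/G}$, fill in points the paper leaves implicit.
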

\subsubsection{Graph of invariant holomorphic 1-form} The graph $\Gamma_{\eta}$ of a $G$-invariant holomorphic closed 1-form $\eta\in \Hdot^0(M, \Omega)^G$ is a $G$-invariant Lagrangian of X. Similarly to Lemma \ref{lem: graph}, when the zero locus $Z(\eta)$ is smooth, its normal bundle in $M$ is 2-torsion.
\begin{corollary}\label{cor: eqgraph}
    Let $\eta$ be an $G$-invariant holomorphic closed 1-form on $M$ with smooth zero locus, then 
    $$\Ext_{T^{\vee}M\sslash  G}^{\bullet}(\shO_{\Gamma_{\eta}/G}, \shO_{M/G})\cong \bigoplus_{p+q=\bullet} \Hdot^{q-n}(Z(\eta)/G, \wedge^p\LL_{Z(\eta)/G}),$$
    with $n=\codim(Z(\eta), M)$.
    If $Z(\eta)$ is proper, then 
    $$\Ext_{T^{\vee}M\sslash  G}^{\bullet}(\shO_{\Gamma(\eta)/G}, \shO_{M/G})\cong \Hdot^{\bullet-n}_G(Z(\eta), (N_{Z(\eta)/M})_{\delta}),$$
\end{corollary}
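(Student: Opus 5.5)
The plan is to specialize the general machinery of this section to the Lagrangian pair $C_1=\Gamma_\eta$, $C_2=M$ inside $X=T^{\vee}M$, and then read off the two formulas from Corollary \ref{symredext2}.

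\textbf{Step 1: verify Set-up (S) and compute the intersection.} The graph $\Gamma_\eta$ is $G$-invariant because $\eta$ is, and it is a smooth connected Lagrangian because $\eta$ is closed. The zero section $M$ is also a $G$-invariant Lagrangian. Their scheme-theoretic intersection is $B=Z(\eta)$, which is smooth by assumption. Since both Lagrangians lie over the same moment value by Lemma \ref{lem: invLag}, both $\Gamma_\eta/G$ and $M/G$ are Lagrangians in $T^{\vee}M\sslash G$.

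\textbf{Step 2: the equivariant splitting of the conormal sequence of $B\subset X$.} I would use the $G$-equivariant decomposition $TX|_M\cong TM\oplus T^{\vee}M$ along the zero section, which is canonical because it comes from the vector bundle structure. Restricting to $Z(\eta)$ gives
$$\Omega_X|_{Z}\cong \Omega_M|_{Z}\oplus TM|_{Z}.$$
The splitting of $0\arw N^{\vee}_{Z/X}\arw\Omega_X|_Z\arw\Omega_Z\arw0$ is then the same as an equivariant splitting of $\Omega_M|_Z\arw\Omega_Z$, that is, of the conormal sequence of $Z\subset M$. This is the step I expect to be the main obstacle, since such a splitting is not automatic. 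I would try to get it from the Hessian: by Lemma \ref{lem: graph}, $D\eta$ identifies $N_{Z/M}\cong N^{\vee}_{Z/M}$, and this map is $G$-equivariant because $\eta$ is invariant. The Hessian should then provide a projection $\Omega_M|_Z\arw N^{\vee}_{Z/M}$. If that only gives a splitting of the excess sequence after a choice, I would instead take an equivariant splitting from reductivity of $G$. On affine $Z$ this follows from averaging; in general it would have to be imposed as a hypothesis, as the statement implicitly does by analogy with Corollary \ref{cor:graph}.

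\textbf{Step 3: apply Corollary \ref{symredext2}.} This gives
$$\Ext^{\bullet}\cong\bigoplus\Hdot^q\bigl(Z/G,\wedge^{p-m}\LL_{Z/G}\otimes\det N^{\vee}_{Z/M}\bigr),$$
where $m=\codim(Z,M)$, up to the order of the two arguments. Here $N_{B/C_2}=N_{Z/M}$, because Corollary \ref{symredext2} uses the normal bundle of $B$ in the second Lagrangian, which is $C_2=M$. To simplify:

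\emph{Untwisting.} The equivariant Hessian gives $N_{Z/M}\cong N^{\vee}_{Z/M}$, so $\det(N_{Z/M})^{\otimes 2}\cong\shO$ in $\Pic_G(Z)$, and the line bundle is $2$-torsion. The displayed Hodge formula omits the twist, so I would read it either as suppressing this $2$-torsion twist or as using an orientation of $\det N_{Z/M}$.

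\emph{Reindexing.} After untwisting, the grading is reindexed as $\bullet-n$ with $n=m$.

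\textbf{Step 4: pass to equivariant cohomology.} When $Z(\eta)$ is proper, it is proper over affine and $h^0(Z,\shO)^G<\infty$. The torsion case of Corollary \ref{symredext2}, which relies on the covering trick and the Hodge-to-de Rham degeneration of \cite{kubrak2022hodge}, then yields
$$\Hdot^{\bullet-n}_G\bigl(Z(\eta),\det(N_{Z(\eta)/M})_\delta\bigr),$$
which is the second formula, with $\det$ understood.
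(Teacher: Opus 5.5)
Your argument is the paper's own implicit one. You specialize Corollary \ref{symredext2} to $(C_1,C_2)=(\Gamma_\eta,M)$, use the $G$-equivariant Hessian isomorphism $N_{Z/M}\cong N^{\vee}_{Z/M}$ to make $\det N_{Z/M}$ $2$-torsion in $\Pic_G(Z)$, and for proper $Z(\eta)$ invoke the covering trick with Hodge-to-de Rham degeneration. Your caveats are justified and are not gaps relative to the paper, with one correction to your exploratory attempt: the Hessian cannot supply the splitting. It is the composite $TM|_Z\arw N_{Z/M}\cong N^{\vee}_{Z/M}\hookrightarrow\Omega_M|_Z$, so it only re-encodes the normal sequence and gives no retraction of $N^{\vee}_{Z/M}\hookrightarrow\Omega_M|_Z$. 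The splitting can genuinely fail: take a smooth fibre $Z=\pi^{-1}(0)$ of a smooth family $\pi$ with nonzero Kodaira--Spencer class at $0$, and $\eta=d(\pi^2)$; that class is the extension class of the normal sequence of $Z\subset M$. So the splitting must be assumed, as the paper tacitly does through Corollary \ref{symredext2}. Likewise, the $\det N^{\vee}_{Z/M}$ twist in the first formula, and the $\det$ in the second, cannot be dropped.
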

\subsubsection{Equivariant conormal varieties}Let $Z_1$ and $Z_2$ be two smooth $G$-invariant subvarieties of $M$ with a transverse intersection $Z=Z_1\cap Z_2$. Consider conormal varieties $C_1=N_{Z_1/M}^{\vee}$ and $C_2=N_{Z_2/M}^{\vee}$,  which are $G$-invariant Lagrangians of $X=T^{\vee}M$ whose intersections $Z$ has splitting conormal sequence. Hence,
\begin{corollary}\label{cor: eqconormal}
    For conormal varieties $C_1, C_2$ as above, 
    $$ \Ext^{\bullet}_{X\sslash  G}(\shO_{C_1/G}, \shO_{C_2/G})\cong\bigoplus_{p+q+n=\bullet}
\Hdot^q(Z/G, \wedge^{p-n}\LL_{Z/G}\otimes \det(N_{Z_1/M})\otimes \det(N_{Z_2/M}^{\vee})),$$
where $n=\codim (Z, M)$.
In particular, if furthermore, $\det(N_{Z_1/M})=\det(N_{Z_2/M})$ and $Z$ is proper, then 
  $$ \Ext^{\bullet}_{X\sslash  G}(\shO_{C_1/G}, \shO_{C_2/G})\cong \Hdot^{\bullet-n}_G(Z, \CC).$$
  \end{corollary}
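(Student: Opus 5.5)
The plan is to derive Corollary \ref{cor: eqconormal} from the general formula (\ref{eq: generalexteq}), or equivalently from the first part of Corollary \ref{symredext2}, applied to $C_1=N^{\vee}_{Z_1/M}$ and $C_2=N^{\vee}_{Z_2/M}$. This requires checking the hypotheses of Set-up (S), identifying $B$, $m$ and $\det(N_{B/C_2})$, and then running the covering/Hodge argument for the second statement.

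\textbf{Set-up (S) and the splitting.} Both conormal varieties are smooth, connected, $G$-invariant Lagrangians on which $\mu\equiv 0$, since $q$ annihilates $TZ_i\ni\xi^{\#}(p)$. Their scheme-theoretic intersection is computed fiberwise over $M$: the base points lie in $Z_1\cap Z_2=Z$, and the covector $q$ lies in $N^{\vee}_{Z_1/M}|_Z\cap N^{\vee}_{Z_2/M}|_Z$. That intersection is the annihilator of $TZ_1|_Z+TZ_2|_Z=TM|_Z$, which is zero by transversality. So $B=Z$ sits in the zero section, and one checks locally, in coordinates adapted to $Z_1,Z_2$, that the intersection is reduced. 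Since $B\subset M\subset X$ and the conormal sequence of $M\subset X$ splits canonically and $G$-equivariantly, we obtain a splitting of $\Omega_X|_Z\to\Omega_M|_Z$. The conormal sequence of $Z\subset M$ need not split in general, so the splitting for $Z\subset X$ must come from the identification $E\cong\Omega_B$ of Lemma \ref{exc}. Concretely, the excess sequence is $TC_1|_Z+TC_2|_Z\subset TX|_Z=TM|_Z\oplus\Omega_M|_Z$. Here $TC_1|_Z+TC_2|_Z=TM|_Z\oplus\bigl(N^{\vee}_{Z_1}|_Z+N^{\vee}_{Z_2}|_Z\bigr)$, and $N^{\vee}_{Z_1}|_Z\oplus N^{\vee}_{Z_2}|_Z= N^{\vee}_{Z/M}$ by transversality. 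The quotient is therefore $\Omega_M|_Z/N^{\vee}_{Z/M}$, and a splitting of the excess sequence is equivalent to a splitting of the conormal sequence of $Z\subset M$. Thus, to match the paper's claim, I would take the splitting hypothesis as stated and use Theorem \ref{mainthm}.

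\textbf{Computing $m$ and the line bundle.} We have $m=\codim(Z,C_2)$. Since $C_2=N^\vee_{Z_2/M}$ has dimension $\dim M$ and $Z$ lies in the zero section, $m=\dim M-\dim Z=n$. The normal bundle $N_{Z/C_2}$ sits in
\[0\to N_{Z/Z_2}\to N_{Z/C_2}\to N^{\vee}_{Z_2/M}|_Z\to 0,\]
and transversality gives $N_{Z/Z_2}\cong N_{Z_1/M}|_Z$. Hence $\det N_{Z/C_2}\cong \det N_{Z_1/M}\otimes\det N^{\vee}_{Z_2/M}$, and therefore $\det(N^{\vee}_{Z/C_2})\cong\det N^{\vee}_{Z_1/M}\otimes\det N_{Z_2/M}$ $G$-equivariantly. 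Substituting into (\ref{eq: generalexteq}) with $F_i=\shO$ gives the first display. The displayed formula has the dual pairing, which matches the convention used in Corollary \ref{symredext2} versus Proposition \ref{prop: ssd}; I will follow the paper's convention and note that swapping the roles gives the other.

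\textbf{Second statement.} If $\det N_{Z_1/M}\cong\det N_{Z_2/M}$ (equivariantly on $Z$), the twisting line bundle is trivial. Corollary \ref{symredext2} then gives the equivariant de Rham identification with $\CC$ coefficients, once the Hodge-to-de Rham degeneration holds. For this I use that $Z$ proper implies $h^0(Z,\shO)^G<\infty$ and that $Z$ is proper over affine, so \cite[Example 3.1.6]{kubrak2022hodge} applies. The main obstacle is the splitting hypothesis discussed above, which is the only non-formal input; the rest is bookkeeping of determinants.
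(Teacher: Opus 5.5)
Your route is the paper's. You check Set-up (S) for the conormal pair and identify $B=Z$, $m=\codim(Z,C_2)=n$ and $\det N_{Z/C_2}\cong \det N_{Z_1/M}|_Z\otimes\det N^{\vee}_{Z_2/M}|_Z$. You then apply Theorem \ref{mainthm} through Corollary \ref{symredext2}, using the Hodge-to-de Rham degeneration of \cite{kubrak2022hodge} for the second display.

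Your treatment of the splitting is correct and more careful than the paper's, which simply asserts that the conormal sequence of $Z\subset T^{\vee}M$ splits for transverse $Z_1,Z_2$. As you show, that splitting is equivalent to the ($G$-equivariant) splitting of $0\to N^{\vee}_{Z/M}\to\Omega_M|_Z\to\Omega_Z\to 0$, and transversality does not provide it. Take $M=\mathbb{P}^2$, $Z_1=M$ (so $C_1$ is the zero section), $Z_2$ a smooth conic and $G$ trivial. Then $T_{\mathbb{P}^2}|_{Z_2}\cong\shO_{\mathbb{P}^1}(3)^{\oplus 2}\not\cong\shO_{\mathbb{P}^1}(2)\oplus\shO_{\mathbb{P}^1}(4)$, so the sequence does not split. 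This shows the hypothesis is genuinely needed for this proof, not that the conclusion fails. Reading ``as above'' as including the splitting is exactly what the argument requires, and with that reading your proof coincides with the paper's.

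The one real slip is the twist. Plugging your correct $\det N_{Z/C_2}$ into (\ref{eq: generalexteq}) as printed gives $\det N^{\vee}_{Z_1/M}\otimes\det N_{Z_2/M}$, which is the inverse of the displayed line bundle. ``Swapping the roles'' does not repair this, because exchanging $C_1$ and $C_2$ computes a different Ext group. The mismatch comes from a misprint in (\ref{eq: generalexteq}) and in the first half of Corollary \ref{symredext2}. Grothendieck duality gives $i_2^{!}\shO_{C_2/G}\cong\det(N_{B/C_2})[-m]$, so the twist is $\det(N_{B/C_2})$. This agrees with Proposition \ref{prop: ssd} and with the local system $\det(N_{B/C_2})_{\delta}$ in the second half of Corollary \ref{symredext2}. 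With that correction, your determinant identity gives exactly the displayed twist.

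Redoing the duality step also yields $\bigoplus_{k}\Hdot^{\bullet-n-k}(Z/G,\wedge^k\LL_{Z/G}\otimes\det N_{Z/C_2})$. So the index condition in the first display should read $p+q=\bullet$ (with $\wedge^{p-n}$) rather than $p+q+n=\bullet$, which is consistent with the shift $\bullet-n$ in the second display. The second display is unaffected, since there the twist is trivial.
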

  \begin{remark}
      By Anel and Calaque \cite{anel2022shifted}, $T^{\vee}M\sslash_0  G\cong \Tot_{M/G}(\LL_{M/G})$, which can be viewed as the "cotangent bundle of the quotient stack M/G". Then Corollary \ref{cor:eqExt}, \ref{cor: eqgraph} and \ref{cor: eqconormal} are $G$-equivariant analogue of Corollary \ref{cor: Ext}, \ref{cor:graph} and \ref{cor: ConormalLag}, respectively.
  \end{remark}

\section{Geometric Invariant Theory, Semi-stable symplectic quotient and Kirwan Surjectivity}\label{sec:git}
\subsection{Geometric Invariant Theory}
Let $M$ be a smooth complex projective over affine variety with an algebraic reductive group $G$-action on it. We denote $L$ a linearization of the action, i.e., a $G$-equivariant ample line bundle on $M$, which defines a $G$-invariant open \emph{semi-stable} locus 
$$M^{ss}(L):=\{q|\text{ there exists } f\in \Hdot^0(M, L^k)^G \text{ such that}\,f(q)\neq 0 \text{ for some } k> 0\}\subset M.$$ Then we may call the quotient stack $M^{ss}(L)/G$ the \emph{semi-stable quotient stack} or \emph{GIT quotient stack}. We may also write the semi-stable quotient as $M^{ss}/G$ when the linearization is obvious. 

\subsection{Semi-stable Symplectic Quotients}
Assume that the Hamiltonian $G$-action of $(X, \omega)$ has a linearization, \emph{i.e.,} there is a $G$-equivariant ample line bundle $L$ on $X$. We define the \emph{semi-stable symplectic quotient} as the symplectic quotient of the semi-stable locus $X^{ss}\sslash  G$. It is easy to see that $X^{ss}\sslash  G=X^{ss}/G\times_{X/G} X\sslash  G$.
If $j: C\subset X$ is a $G$-invariant Lagrangian, then $j^{*}L$ gives a linearization of $(C, G)$, and $C^{ss}(j^{*}L)=X^{ss}(L)\cap C$.
\subsubsection{Cotangent bundle}
  Let $(M, L, G)$ be a smooth projective over affine $G$-variety with a linearization $L$. Then $X=T^{\vee}M$ admits a natural $G$-action such that the bundle morphism $\pi: X\arw M$ and the zero section $j: M\arw X$ are both $G$-equivariant. The linearization on $X$ is given by $\pi^{*}L$ due to the following proposition. 
\begin{proposition}
    $\pi^{*}L$ is ample on $X$.
\end{proposition}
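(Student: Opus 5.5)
We need to show that $\pi^{*}L$ is ample on $X=\Tot_M(\Omega_M)$, where $\pi: X\arw M$ is an affine morphism and $L$ is ample on $M$. The plan is to use the general fact that the pullback of an ample line bundle under an affine (more generally, quasi-affine) morphism of finite type is ample. This avoids any computation specific to cotangent bundles.

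Step 1: observe that $\pi: X=\Spec_M(\Sym^{\bullet}T_M)\arw M$ is affine, since it is the relative spectrum of the quasi-coherent $\shO_M$-algebra $\Sym^{\bullet}T_M$. It is also of finite type, because $T_M$ is locally free of finite rank.

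Step 2: recall a characterization of ampleness on a quasi-compact scheme $Y$: a line bundle $\mathcal{L}$ is ample if and only if the open sets $Y_s=\{s\neq 0\}$, for $s\in \Hdot^0(Y,\mathcal{L}^{k})$ with $k>0$, form a basis of the topology and are affine. Equivalently, $Y$ is covered by affine opens $Y_s$.

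Step 3: apply this to $L$ on $M$. Choose sections $s_1,\dots,s_r\in \Hdot^0(M,L^{k})$ with $M=\bigcup M_{s_i}$ and each $M_{s_i}$ affine. Then $\pi^{*}s_i\in \Hdot^0(X,\pi^{*}L^{k})$ and $X_{\pi^{*}s_i}=\pi^{-1}(M_{s_i})$. Each of these is affine because $\pi$ is affine, and together they cover $X$. By EGA II, 4.5.2 (or Stacks Project Tag 0892, pullback of ample under quasi-affine morphism), $\pi^{*}L$ is ample.

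Because the statement is stated for $G$-equivariant $L$, we also note that $\pi^{*}L$ carries the pulled-back linearization, since $\pi$ is $G$-equivariant. This gives a $G$-equivariant ample bundle on $X$, as required.

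The only real subtlety is in Step 3: the criterion "covered by affine $X_s$" characterizes ampleness only together with quasi-compactness and separatedness. Both hold here because $X$ is a vector bundle over the quasi-projective variety $M$. If we want to avoid citing that criterion, the fallback is to use Serre's criterion directly. For a coherent sheaf $\mathcal{F}$ on $X$, $\pi_*\mathcal{F}$ is quasi-coherent, and $\pi_*(\mathcal{F}\otimes\pi^{*}L^n)=\pi_*\mathcal{F}\otimes L^n$ by the projection formula. Global generation of $\pi_*\mathcal{F}\otimes L^n$ for $n\gg0$ then comes from writing $\pi_*\mathcal{F}$ as a filtered union of coherent subsheaves, which are ample-twisted generated. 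This only yields global generation of $\mathcal{F}\otimes\pi^*L^n$ if we pass to a coherent subsheaf generating $\mathcal{F}$ as a $\pi^{-1}\Sym T_M$-module, which exists by finite type.
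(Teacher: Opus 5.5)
Your proof is correct, but it takes a different route from the paper. The paper argues by an explicit embedding. It uses global generation of $T_M\otimes L^k$ and of $L^k$ to realize $T^{\vee}M$ as a locally closed subscheme of $M\times\PP^N$, via the total space of a twisted trivial bundle and its projective completion, and then obtains $\pi^{*}L$ by restricting the ample bundle $L\boxtimes\shO(1)$. You use only that $\pi$ is affine. The sets $X_{\pi^{*}s_i}=\pi^{-1}(M_{s_i})$ are affine and cover the quasi-compact scheme $X$, and this is already one of the standard equivalent definitions of ampleness. So your Serre-criterion fallback is not needed, and separatedness follows from ampleness rather than being an extra hypothesis to check. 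Your argument is shorter and more general: it works verbatim for any affine or quasi-affine $G$-equivariant morphism, e.g.\ the total space of any vector bundle. The paper's argument gives an explicit projective embedding, but at the cost of twist bookkeeping, and its written version slips there. Dualizing $V\otimes\shO\twoheadrightarrow T_M\otimes L^k$ gives $\Omega_M\hookrightarrow V^{\vee}\otimes L^{k}$, not $V^{\vee}\otimes L^{-k}$. So one must first rewrite $\PP_M(V^{\vee}\otimes L^{k}\oplus\shO)\cong\PP_M(V^{\vee}\oplus L^{-k})$ before embedding into $M\times\PP(V^{\vee}\oplus W)$. The bundle $L\boxtimes\shO(1)$ then restricts to a positive power of $\pi^{*}L$ rather than to $\pi^{*}L$ itself. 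That is harmless for ampleness, but it is exactly the bookkeeping your approach avoids.
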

  \begin{proof}
      By Serre's theorem, $T_M\otimes L^k$ is globally generated for sufficiently large $k$'s. Then there is a vector space $V$ such that $V\otimes\shO\arw T_M\otimes L^k$ is surjective, and we have an embedding $\Omega_M\hookrightarrow V^{\vee}\otimes L^{-k}$, which gives an embedding for the total spaces:
      $$T^{\vee}M\hookrightarrow \Tot_M(V^{\vee}\otimes L^{-k}).$$
      We can further embed $\Tot_M(V^{\vee}\otimes L^{-k})$ into the projective bundle 
      $\PP_M(V^{\vee}\otimes L^{-k}\oplus \shO).$
      Since $L^k$ is globally generated for sufficiently large $k$'s, there is a vector space $W$, such that $L^{-k}\hookrightarrow W\otimes \shO$, and we can embed $\PP_M(V^{\vee}\otimes L^{-k}\oplus \shO)$ into $\PP_M(V^{\vee}\otimes W\oplus \shO)\cong M\times \PP(V^{\vee}\otimes W\oplus\CC)$
       Then $L':=L\boxtimes \shO_{\PP(V^{\vee}\otimes W\oplus\CC)}(1)$ is an ample line bundle on $M\times \PP(V^{\vee}\otimes W\oplus\CC)$, whose restriction on $T^{\vee}M$ is $\pi^*L$. 
  \end{proof}
  \begin{proposition} 
      $(T^{\vee}M)^{ss}\sslash  G$ contains $T^{\vee}({M^{ss}/G})$ as an open substack. 
  \end{proposition}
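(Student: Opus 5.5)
The plan is to realize $T^{\vee}(M^{ss}/G):=\Tot_{M^{ss}/G}(\LL_{M^{ss}/G})$ as the zero-level symplectic reduction of the open $G$-invariant subvariety $T^{\vee}M^{ss}=\pi^{-1}(M^{ss})\subset X=T^{\vee}M$. We then show that this reduction is an open substack of $X^{ss}\sslash G$, where $X^{ss}=X^{ss}(\pi^{*}L)$ for the linearization $\pi^{*}L$ constructed in the previous proposition. Throughout we take the moment level $c=0$, which is the level of the zero section.

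\textbf{Step 1: inclusion of semistable loci.} Let $f\in\Hdot^0(M,L^k)^G$. Then $\pi^{*}f\in\Hdot^0(X,\pi^{*}L^k)^G$, because $\pi$ is $G$-equivariant, and $(\pi^{*}f)(x)=f(\pi(x))$. So if $q\in M^{ss}$ is witnessed by $f$, every point of $\pi^{-1}(q)$ is witnessed by $\pi^{*}f$. Hence
$$\pi^{-1}(M^{ss}(L))\subset X^{ss}(\pi^{*}L).$$
Both sides are open and $G$-invariant, so $T^{\vee}M^{ss}/G\hookrightarrow X^{ss}/G$ is an open immersion of stacks. We do not need the reverse inclusion, and it may fail, since $\pi^{*}L$ could have more invariant sections than those pulled back from $M$.

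\textbf{Step 2: passing to the derived reductions.} The moment map of $T^{\vee}M^{ss}$ is the restriction $\mu|_{T^{\vee}M^{ss}}$. Therefore
$$T^{\vee}M^{ss}\sslash_0 G\cong T^{\vee}M^{ss}/G\times_{X/G}X\sslash_0 G\cong T^{\vee}M^{ss}/G\times_{X^{ss}/G}X^{ss}\sslash_0 G,$$
using the identity $X^{ss}\sslash G=X^{ss}/G\times_{X/G}X\sslash G$ noted above. The morphism $T^{\vee}M^{ss}\sslash_0G\to X^{ss}\sslash_0 G$ is then a derived base change of the open immersion from Step~1, so it is again an open immersion of derived stacks. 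Equivalently, on the atlas level, $\mu_{der}^{-1}(0)\cap T^{\vee}M^{ss}$ is the restriction of the Koszul-type derived scheme $\mu_{der}^{-1}(0)$ to an open subset, and the open immersion is $G$-equivariant.

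\textbf{Step 3: identification with the cotangent stack.} Apply the result of Anel--Calaque quoted in the last remark to the smooth $G$-variety $M^{ss}$. This gives
$$T^{\vee}M^{ss}\sslash_0 G\cong\Tot_{M^{ss}/G}(\LL_{M^{ss}/G}),$$
and combining with Step~2 proves the proposition.

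The only delicate point is Step~2. There one has to make sure the derived structure is preserved, i.e. that restricting a derived fiber product to an open substack of one factor yields an open substack of the derived fiber product. I would handle this by the base-change description above, since open immersions are flat (indeed étale), so no extra derived corrections appear. Alternatively, it can be checked locally on the Koszul model of $\mu_{der}^{-1}(0)$.
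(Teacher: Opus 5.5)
Your proposal is correct and follows essentially the same route as the paper. The paper also shows the inclusion $\pi^{-1}(M^{ss})\subset (T^{\vee}M)^{ss}$ using the pulled-back invariant sections $\pi^{*}f$, obtains openness from $M^{ss}$ being open, and identifies $T^{\vee}(M^{ss}/G)\cong T^{\vee}(M^{ss})\sslash G$ via Anel--Calaque. Your Step~2 writes the map of derived reductions as a base change of the open immersion $T^{\vee}M^{ss}/G\hookrightarrow X^{ss}/G$; this spells out a step the paper compresses into ``it is sufficient to show''.
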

\begin{proof}
    By \cite{anel2022shifted}, $T^{\vee}({M^{ss}/G})\cong T^{\vee}(M^{ss})\sslash  G$. It is sufficient to show that $T^{\vee}(M^{ss})\subset (T^{\vee}M)^{ss}$ is an open subvariety. Pick up any $(p, q)\in T^{\vee}(M^{ss})$, there is a $f\in \Hdot^0(M, L^k)^G$ such that $f(p)\neq 0$ for some $k> 0$. Then $\pi^*f\in \Hdot^0(T^{\vee}M, \pi^*L^k)^G$ and $(\pi^*f)(p, q)=f(p)\neq 0,$ which implies that $(p, q)\in (T^{\vee}M)^{ss}.$ Hence, $T^{\vee}(M^{ss})\subset (T^{\vee}M)^{ss}.$ Note that semi-stable locus $M^{ss}\subset M$ is an open subvariety, and so is $T^{\vee}M^{ss}\subset T^{\vee}M$, and hence $T^{\vee}(M^{ss})\subset (T^{\vee}M)^{ss}$ is open since $(T^{\vee}M)^{ss}\subset T^{\vee}M$ is open.
\end{proof}
The next example illustrates that $T^{\vee}({M^{ss}/G})$ may not equal to $ (T^{\vee}M)^{ss}\sslash  G$.
\begin{examples}
Let $M=\CC^2$, which admits a $G=\CC^{*}$ action of weight $(1,-1)$. Denote a general point of $M$ by $(z_1, z_2)$, and the action is given by $t.(z_1, z_2)=(tz_1, t^{-1}z_2)$ for $t\in \CC^*$. Denote a general point of $X=T^{\vee}M=\CC^4$ by $(z_1, z_2, w_1, w_2)$, and the induced $\CC^*$ action has weight $(1,-1,1,-1)$. 

Choose a linearization $L=\shO(\chi)$ on $M$, where $\chi: \CC^{*}\arw \CC^{*}$ is the identity character. Then $M^{ss}=\{z_1\neq 0\}$, and $(T^{\vee}M)^{ss}=\{z_1\neq 0\}\cup\{w_2\neq 0\}$ which contains but not equal to $T^{\vee}(M^{ss})=\{z_1\neq 0\}$. Their symplectic quotients
$$T^{\vee}(M^{ss})\sslash  G\cong T^{\vee}(M^{ss}/G)=T^{\vee}\CC \subsetneq T^{\vee}\PP^1\cong (T^{\vee}M)^{ss}\sslash  G.$$
\end{examples}

\subsection{Twisted Kirwan Surjectivity}
The natural embedding $M^{ss}\hookrightarrow M$ induces a map on their $G$-equivariant cohomology, which is called the \emph{Kirwan map}. The Kirwan surjectivity states that the Kirwan map is surjective on the torsion free part \cite{kirwan1984cohomology}:
$$ Kir: \Hdot^{\bullet}_G(M,
\CC)\longrightarrow \Hdot^{\bullet}_G(M^{ss}, \CC).$$ 
If there is a $G$-equivariant $\CC$-local system $\mathfrak{L}$ on $B$, then the following twisted Kirwan map is also surjective (Theorem \ref{thm:twistedkirwan}):
$$ Kir_{\mathfrak{L}}: \Hdot^{\bullet}_G(B,
\mathfrak{L})\longrightarrow \Hdot^{\bullet}_G(B^{ss}, \mathfrak{L}).$$ 

%\begin{proof}
%Apply KN stratification + equivariant MV seq....
%\end{proof}

\subsection{Application of Twisted Kirwan Surjectivity}
We can apply the results in Section \ref{sec:eqsym} to the symplectic quotient $X^{ss}\sslash  G$. It is easy to see that the GIT quotient stack $C^{ss}/G$ is a Lagrangian of $X^{ss}\sslash  G$ if $C$ is a $G$-invariant Lagrangian, and ($C_1^{ss}/G, C_2^{ss}/G)$ satisfies Set-up (S) and conormal $B\subset X$ splits if $(C_1/G, C_2/G)$ does. By Example 3.1.7 of \cite{kubrak2022hodge}, the Hodge to de Rham degeneracy of $B^{ss}/G$ holds if $B$ is proper over affine and $\dim H^0(B, \shO)^G<\infty$. So we have:
\begin{proposition}\label{prop: gitext}
   In Set-up (S), assume that conormal $B\subset X$ splits. Let $L$ be an $G$ ample line bundle on $X$. Then
$$ \Ext^{\bullet}_{X^{ss}\sslash  G}(\shO_{C^{ss}_1/G},\shO_{C^{ss}_2/G})\cong\bigoplus_{p+q+m=\bullet}
\Hdot^q(B^{ss}/G, \wedge^{p-m}\LL_{B^{ss}/G}\otimes det(N_{B^{ss}/C_2^{ss}}^{\vee})), $$
where $m=\codim(B, C_2)$.
If furthermore $\det(N_{B/C_2})$ is torsion in $\Pic(B/G)$, $B$ is proper over affine and $\dim H^0(B, \shO)^G<\infty$, then 
$$\Ext^{\bullet}_{X^{ss}\sslash  G}(\shO_{C^{ss}_1/G},\shO_{C^{ss}_2/G})\cong \Hdot^{\bullet-m}_G(B^{ss}, \det(N_{B^{ss}/C^{ss}_2})_{\delta}).$$

\end{proposition}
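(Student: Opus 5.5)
The plan is to reduce Proposition \ref{prop: gitext} to Theorem \ref{mainthm} and Corollary \ref{symredext2}, applied to the $G$-invariant open subvariety $X^{ss}=X^{ss}(L)\subset X$ in place of $X$. First I check that $(X^{ss}, C_1^{ss}, C_2^{ss})$ again satisfies Set-up (S).

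\begin{itemize}
\item $X^{ss}$ is open and $G$-invariant in $X$, so it is a smooth quasi-projective symplectic variety. The Hamiltonian action and moment map restrict to it.
\item $C_i^{ss}=C_i\cap X^{ss}$ is open in $C_i$, hence a smooth $G$-invariant Lagrangian of $X^{ss}$.
\item Connectedness is preserved: $C_i$ is irreducible (smooth and connected), so a nonempty open subset is connected.
\item The scheme-theoretic intersection is $C_1^{ss}\cap C_2^{ss}=B\cap X^{ss}=B^{ss}(L|_B)$, which is open in $B$ and hence smooth.
\item If $B^{ss}$ is empty, both sides of the statement vanish, so we may assume it is nonempty.
\end{itemize}

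Restriction to open subsets commutes with forming normal and conormal bundles. So the $G$-equivariant splitting of $0\to N^\vee_{B/X}\to \Omega_X|_B\to\Omega_B\to 0$ restricts to a splitting over $B^{ss}$, and $N_{B^{ss}/C_2^{ss}}=N_{B/C_2}|_{B^{ss}}$. Also $X^{ss}\sslash G=\mu_{der}^{-1}(c)^{ss}/G$ is an open substack of $X\sslash G$, and $C_i^{ss}/G$ are its Lagrangians by Proposition \ref{prop: symred}.

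With the set-up verified, Theorem \ref{mainthm} gives
$$C_1^{ss}/G\times_{X^{ss}\sslash G}C_2^{ss}/G\cong\Tot_{B^{ss}/G}(\LL_{B^{ss}/G}[-1]).$$
The first isomorphism then follows exactly as in the computation preceding Corollary \ref{symredext}: adjunction, base change, Grothendieck duality $i_2^!\shO\cong\det(N^\vee)[-m]$, and $\pi_*\shO_{\Tot}=\bigoplus_p\wedge^p\TT_{B^{ss}/G}[p]$. The codimension $m$ is unchanged because $B^{ss}$ is open in $B$.

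For the second statement, torsion of $\det(N_{B/C_2})$ in $\Pic(B/G)$ restricts to torsion of $\det(N_{B^{ss}/C_2^{ss}})$ in $\Pic(B^{ss}/G)$, and the resulting local system is the restriction of $\det(N_{B/C_2})_\delta$. I then run the covering trick of Proposition \ref{prop: Ext} and Corollary \ref{symredext2} equivariantly. Take the $n$-cyclic cover $\widetilde{B^{ss}}\to B^{ss}$, which is $G$-equivariant after possibly passing to the $\mu_n$-extended structure. This splits the twisted Hodge and twisted equivariant cohomology into eigen-summands, and we compare dimensions using Hodge-to-de Rham degeneration for $\widetilde{B^{ss}}/G$.

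The main obstacle is that degeneration. $B^{ss}$ is not proper over affine in general, so \cite[Example 3.1.6]{kubrak2022hodge} does not apply directly. Instead I would invoke the statement quoted just before the proposition (\cite[Example 3.1.7]{kubrak2022hodge}). For $B$ proper over affine with $\dim H^0(B,\shO)^G<\infty$, the semistable quotient stack $B^{ss}/G$, and likewise its finite étale cover, is cohomologically Hodge spreadable. To use it, I must check that the cyclic cover still meets the hypotheses: it is finite over $B$, hence again proper over affine, and its invariant functions are finite-dimensional since they form a summand of $H^0(B,c_*\shO)^G$. The dimension-counting argument then forces the twisted spectral sequences to degenerate, giving
$$\Hdot^{\bullet-m}_G\bigl(B^{ss},\det(N_{B^{ss}/C_2^{ss}})_\delta\bigr).$$
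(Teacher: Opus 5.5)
Your proposal is correct and follows the paper's own argument: restrict Set-up (S) and the equivariant splitting to the open $G$-invariant $X^{ss}$, rerun Theorem \ref{mainthm} and the Ext computation of Section \ref{sec:eqsym}, and take Hodge-to-de Rham degeneration on $B^{ss}/G$ (and on its cyclic cover) from \cite[Example 3.1.7]{kubrak2022hodge}. Your extra checks (connectedness of $C_i^{ss}$, the empty case, and the hypotheses on the cover $\widetilde{B}$) go beyond what the paper writes, but the finiteness of invariant functions on the cover is justified differently: $H^0(\widetilde{B},\shO)^G$ is all of $H^0(B,c_*\shO)^G$, and it is finite-dimensional because, by reductivity, it is a finite module over $H^0(B,\shO)^G$, not because of a summand argument.

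One correction, which you share with the paper's display in Section \ref{sec:eqsym}. For the codimension-$m$ regular embedding $i_2$, Grothendieck duality gives $i_2^!\shO_{C_2}\cong\det(N_{B/C_2})[-m]$, not $\det(N^\vee)[-m]$ (compare Proposition \ref{prop: ss}). With the $\det(N_{B^{ss}/C_2^{ss}})$ twist, the first isomorphism feeds consistently into the local system $\det(N_{B^{ss}/C_2^{ss}})_{\delta}$ of the second.
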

Moreover, the embedding $X^{ss}\sslash  G\hookrightarrow X\sslash  G$ induces an $\infty$-functor by pullback
$$res: D_{qc}(X\sslash  G)\arw D_{qc}(X^{ss}\sslash  G).$$
Note that all isomorphisms in Theorem \ref{mainthm} and Corollary \ref{symredext}, \ref{symredext2} are natural, and we have the following statements.

\begin{theorem}\label{mainthm2}
In Set-up (S), assume that conormal $B\subset X$ splits. Let $L$ be an $G$ ample line bundle on $X$. Suppose that $\det(N_{B/C_2})$ is torsion, $B$ is proper over affine, and $\dim H^0(B, \shO)^G<\infty$.  We have the following commutative diagram
\begin{equation*}
\begin{tikzcd}
     \Ext_{X\sslash  G}^{\bullet}(\shO_{C_1/G}, \shO_{C_2/G}) \arrow[rr, "res"] \arrow[d, "\cong"] && \Ext^{\bullet}_{X^{ss}\sslash  G}(\shO_{C_1^{ss}/G}, \shO_{C_2^{ss}/G})\arrow[d, "\cong"]\\
     \Hdot^{\bullet-m}_G(B, \mathfrak{L}) \arrow[rr, "Kir_{\mathfrak{L}}"]&& \Hdot^{\bullet-m}_G(B^{ss}, \mathfrak{L}),
\end{tikzcd}
\end{equation*}
where $m=\codim(B, C_2)$ and $\mathfrak{L}:=\det(N_{B/C_2})_{\delta}$. Both $Kir_{\mathfrak{L}}$ and $res$ are surjective by Theorem \ref{thm:twistedkirwan}.
\end{theorem}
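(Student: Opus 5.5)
The plan is to build the commutative square in two steps, checking naturality of each vertical isomorphism under restriction to the open substacks, and then deduce surjectivity of $res$ from the twisted Kirwan theorem (Theorem \ref{thm:twistedkirwan}).

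\textbf{Step 1: the derived intersections are compatible with restriction.} Restricting along the open embedding $u: X^{ss}\sslash G\hookrightarrow X\sslash G$ is flat. Derived fiber products commute with base change along open immersions, so
$$C^{ss}_1/G\times_{X^{ss}\sslash G}C^{ss}_2/G\cong (C_1/G\times_{X\sslash G}C_2/G)\times_{X\sslash G}X^{ss}\sslash G.$$
This is the restriction of $\Tot_{B/G}(\LL_{B/G}[-1])$ to $B^{ss}/G$, where $B^{ss}=B\cap X^{ss}=C_1^{ss}\cap C_2^{ss}$. The splitting of the conormal sequence of $B\subset X$ restricts to a $G$-equivariant splitting on $B^{ss}$. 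Moreover, the isomorphism $\varphi$ of Theorem \ref{mainthm} is built functorially from this splitting and the triangle for $\TT_\gamma$, so it restricts to the corresponding $\varphi$ for the semistable data. It follows that the chain of isomorphisms used in Corollary \ref{symredext2} commutes with $u^*$: adjunction, base change, Grothendieck duality $i_2^!\shO\cong\det(N_{B/C_2}^\vee)[-m]$, and $\pi_*\shO=\bigoplus\wedge^p\TT[p]$. Concretely, $res$ is identified with the restriction map on twisted Hodge cohomology,
$$\bigoplus \Hdot^q(B/G,\wedge^{p-m}\LL_{B/G}\otimes\det N^\vee)\to\bigoplus \Hdot^q(B^{ss}/G,\wedge^{p-m}\LL_{B^{ss}/G}\otimes\det N^\vee).$$

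\textbf{Step 2: pass to twisted de Rham/equivariant cohomology.} Use the covering trick. Take the $n$-cyclic cover $c:\widetilde B\to B$ attached to the torsion bundle $\det(N_{B/C_2})$, made $G$-equivariant by passing to a finite cover of $G$ if necessary. Its restriction to $\widetilde B^{ss}:=c^{-1}(B^{ss})$ is the cover for $B^{ss}$. The twisted Hodge-to-de Rham spectral sequences for $B/G$ and $B^{ss}/G$ degenerate: they are direct summands, under the $\mu_n$-eigenspace decomposition, of those for $\widetilde B/G$ and $\widetilde B^{ss}/G$. Those degenerate by \cite{kubrak2022hodge}, since $\widetilde B$ is proper over affine with finite-dimensional invariant functions, and the same holds for the semistable locus by Example 3.1.7 there. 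The Hodge filtration is functorial under the open restriction, and the comparison with equivariant cohomology of the local system $\mathfrak{L}$ is natural. Hence the associated graded of $\Hdot_G^{\bullet-m}(B,\mathfrak{L})\to\Hdot_G^{\bullet-m}(B^{ss},\mathfrak{L})$ is the Hodge restriction from Step 1. There is one subtlety: the vertical isomorphisms are only non-canonical splittings of a filtration. To keep the square commuting I would work with filtered maps. Surjectivity of a strictly compatible filtered map is equivalent to surjectivity on associated graded, so commutativity up to this filtration identification is all that is needed.

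\textbf{Step 3: conclude.} Theorem \ref{thm:twistedkirwan} gives surjectivity of $Kir_{\mathfrak{L}}$. Restriction to an open substack respects Hodge filtrations strictly, because both sides degenerate and the morphism of mixed Hodge structures is strict. So surjectivity of $Kir_{\mathfrak{L}}$ implies surjectivity on graded pieces, which by Step 1 is exactly $res$.

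The main obstacle is Step 2. One needs the degeneration and the strictness of the Hodge filtration for the non-proper stacks $B/G$ and $B^{ss}/G$, twisted by a local system and equivariantly. I would first try to reduce everything to the cyclic cover $\widetilde B$, where the untwisted results of \cite{kubrak2022hodge} and Kirwan apply. I would then extract the $\mathfrak{L}$-isotypic component using the $\mu_n$-action, which commutes with all the maps involved.
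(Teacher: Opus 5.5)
Your route is the paper's route. The paper's entire argument for Theorem \ref{mainthm2} is that the identifications of Theorem \ref{mainthm} and Corollary \ref{symredext2} are natural under restriction to the open substack $X^{ss}\sslash G$, followed by Theorem \ref{thm:twistedkirwan}. That naturality is your Step 1, including $B^{ss}=C_1^{ss}\cap C_2^{ss}$ and the restricted splitting.

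You go further and correctly notice that the Hodge-to-de Rham identification is only a splitting of the Hodge filtration. What naturality actually gives is $res\cong\mathrm{gr}_F(Kir_{\mathfrak{L}})$. Surjectivity of $res$ therefore needs $Kir_{\mathfrak{L}}$ to be strict for $F$. The paper passes over this by declaring all vertical maps natural. (Minor point: $\det(N_{B/C_2})$ is assumed torsion in $\Pic_G(B)$, so the cyclic cover is already $G$-equivariant and no cover of $G$ is needed.)

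The gap is your justification of that strictness in Step 3.
\begin{enumerate}
\item Degeneration does not give strictness. It is a property of each side separately and says nothing about how the map meets the filtrations. For example, take the identity $\CC\to\CC$ from $F^0=\CC\supset F^1=0$ to $F^0=F^1=\CC\supset F^2=0$. It is filtered and surjective, but its $\mathrm{gr}$ is not surjective.
\item Strictness of morphisms of mixed Hodge structures concerns Deligne's Hodge filtration. The filtration you control through \cite{kubrak2022hodge} is the one induced by $\wedge^{\geq p}\LL$ on the non-proper stacks $\widetilde B/G$ and $\widetilde B^{ss}/G$. For non-proper spaces these differ in general. On a punctured elliptic curve $U$, the form $\wp(z)\,dz\in\Hdot^0(U,\Omega^1)$ shows that the naive $F^1\Hdot^1(U)$ is all of $\Hdot^1(U)$, while Deligne's $F^1$ is a line.
\end{enumerate}
Neither \cite{kubrak2022hodge} (which gives degeneration only) nor Kirwan (which concerns Betti cohomology only) supplies this comparison, so passing to the cover does not close the gap.

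You need one of two additional inputs.
\begin{enumerate}
\item A theorem that $\Hdot^\bullet_G(\widetilde B,\CC)$ and $\Hdot^\bullet_G(\widetilde B^{ss},\CC)$ carry pure Hodge structures whose Hodge filtration is the $\wedge^{\geq p}\LL$ one. Then restriction is a morphism of pure Hodge structures of the same weight, hence strict. The decomposition $F^p\cap\overline{F^q}$ then also gives canonical, $\mu_n$-equivariant vertical isomorphisms, so the stated square commutes on the nose.
\item A Dolbeault-type Kirwan surjectivity proved directly on coherent cohomology. Run the HKKN stratification argument with local cohomology along the strata, as Teleman does for projective varieties, to show that
$\Hdot^q(B/G,\wedge^p\LL_{B/G}\otimes\det(N^{\vee}_{B/C_2}))\to\Hdot^q(B^{ss}/G,\wedge^p\LL_{B^{ss}/G}\otimes\det(N^{\vee}_{B/C_2}))$
is onto.
\end{enumerate}
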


In particular, when $C_1=C_2=C$, we have 
\begin{theorem}\label{thm: ssc}
    Let $C$ be a $G$-invariant Lagrangian of $X$ such that the conormal sequence of $C\subset X$ splits, and suppose further that $C$ is proper over affine and $\dim H^0(C, \shO)^G<\infty$. Then there is a commutative diagram
\begin{equation*}
\begin{tikzcd}
     \Ext_{X\sslash  G}^{\bullet}(\shO_{C/G}, \shO_{C/G})  \arrow[d, "\cong"] \arrow[rr, "res"]&& \Ext^{\bullet}_{X^{ss}\sslash  G}(\shO_{C^{ss}/G}, \shO_{C^{ss}/G})\arrow[d, "\cong"]\\
     \Hdot^{\bullet}_G(C, \CC) \arrow[rr, "Kir"]&& \Hdot^{\bullet}_G(C^{ss}, \CC).
\end{tikzcd}
\end{equation*}
The Kirwan map $Kir$ is surjective and hence $res$ is also surjective. 
\end{theorem}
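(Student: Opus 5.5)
The proof reduces Theorem \ref{thm: ssc} to Theorem \ref{mainthm2} applied to the pair $C_1=C_2=C$. In this case $B=C$ and $N_{B/C_2}=0$, so $\det(N_{B/C_2})=\shO_C$ is trivially torsion, $m=\codim(C,C)=0$ and $\mathfrak{L}=\CC$. The splitting of the conormal sequence of $B\subset X$ is exactly the hypothesis that the conormal sequence of $C\subset X$ splits. The scheme-theoretic intersection $C\cap C=C$ is smooth, so Set-up (S) holds. Properness over affine and $\dim H^0(C,\shO)^G<\infty$ are assumed. The semistable loci are compatible, since $C^{ss}=C\cap X^{ss}$ for the pulled-back linearization, and $C^{ss}\cap C^{ss}=C^{ss}$.

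\textbf{Vertical isomorphisms.} The left arrow is Corollary \ref{symredext}. Concretely, Theorem \ref{mainthm} gives $C/G\times_{X\sslash G}C/G\cong\Tot_{C/G}(\LL_{C/G}[-1])$, and hence $\Ext^\bullet\cong\bigoplus_p\Hdot^{\bullet-p}(C/G,\wedge^p\LL_{C/G})$. Hodge-to-de Rham degeneration for $C/G$, via \cite{kubrak2022hodge}, then identifies this with $\Hdot^\bullet_G(C,\CC)$. The right arrow is Proposition \ref{prop: gitext} with trivial twist; it uses Example 3.1.7 of \cite{kubrak2022hodge} for the degeneration on $C^{ss}/G$.

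\textbf{Commutativity.} This is the point needing care, and I expect it to be the main obstacle. Every step in the chain of isomorphisms is natural with respect to the open immersion $C^{ss}/G\hookrightarrow C/G$ and its base change $X^{ss}\sslash G\hookrightarrow X\sslash G$:
\begin{itemize}
\item adjunction and base change commute with flat (open) restriction;
\item the fibration isomorphism of Theorem \ref{mainthm} restricts to the one for $C^{ss}$, because it is built from the splitting, and the restriction of a splitting is a splitting;
\item Grothendieck duality for $i_2$ is trivial here, since $i_2=\mathrm{id}$;
\item the Hodge-to-de Rham filtration and its degeneration isomorphism are functorial under pullback of de Rham complexes along $C^{ss}/G\to C/G$.
\end{itemize}
The one subtle point is that the degeneration isomorphism is only canonical up to the choice of splitting of the Hodge filtration. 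To make the diagram commute on the nose I would either use the canonical splitting from mixed Hodge theory, which is functorial, or interpret the vertical maps as filtered isomorphisms and note that surjectivity only needs them compatible with the associated gradeds. In the latter case surjectivity of $res$ on each graded piece, hence on the total space since everything is finite dimensional in each degree, suffices.

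\textbf{Surjectivity.} The Kirwan map $Kir:\Hdot^\bullet_G(C,\CC)\to\Hdot^\bullet_G(C^{ss},\CC)$ is surjective by Theorem \ref{thm:twistedkirwan} with $\mathfrak{L}=\CC$. This is Kirwan's theorem: equivariant perfection of the Kirwan–Ness stratification of $C$ with respect to $L|_C$, which uses properness over affine. Since both vertical maps are isomorphisms and the square commutes, $res$ is surjective.
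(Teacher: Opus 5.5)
Your proposal follows the paper's argument: Theorem \ref{thm: ssc} is Theorem \ref{mainthm2} with $C_1=C_2=C$ (so $B=C$, $m=0$, $\mathfrak{L}=\CC$), the vertical isomorphisms are Corollary \ref{symredext} and Proposition \ref{prop: gitext}, commutativity is by naturality (which the paper simply asserts), and surjectivity is Theorem \ref{thm:twistedkirwan} with trivial coefficients.

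One correction to your commutativity discussion: your second fallback does not work. There $res$ corresponds to $\mathrm{gr}_F(Kir)$, and surjectivity of a filtered map does not pass to its associated graded without strictness. For example, the identity of $\CC$, filtered in Hodge degree $0$ on the source and degree $1$ on the target, is bijective, but its $\mathrm{gr}^1$ is the map $0\to\CC$. So you must use the first option, namely strictness of $Kir$ with respect to $F$ (equivalently, compatibility with Deligne's functorial splitting) as a morphism of mixed Hodge structures. That in turn requires the filtration produced by Hodge-to-de Rham degeneration on $C/G$ and $C^{ss}/G$ to coincide with Deligne's Hodge filtration on $\Hdot^\bullet_G$. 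The paper passes over this point as well.
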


\begin{theorem}\label{thm: kircan}
    In Set-up (S), assume that conormal $B\subset X$ splits. Let $L$ be an $G$ ample line bundle on $X$. Suppose that $K_{C_1/G}^{1/2}$ and $K_{C_2/G}^{1/2}$ exist, $B$ is proper over affine and $\dim H^0(B, \shO)^G<\infty$. We have the following commutative diagram
\begin{equation*}
\begin{tikzcd}
     \Ext_{X\sslash  G}^{\bullet}(K_{C_1/G}^{1/2}, K_{C_2/G}^{1/2}) \arrow[rr, "res"] \arrow[d, "\cong"] && \Ext^{\bullet}_{X^{ss}\sslash  G}(K_{C^{ss}_1/G}^{1/2}, K_{C^{ss}_2/G}^{1/2})\arrow[d, "\cong"]\\
     \Hdot^{\bullet-m}_G(B, \mathfrak{E}) \arrow[rr, "Kir_{\mathfrak{E}}"]&& \Hdot^{\bullet-m}_G(B^{ss}, \mathfrak{E}),
\end{tikzcd}
\end{equation*}
where $m=\codim(B, C_2)$, and $\mathfrak{E}=(K_{C_1/G}^{-1/2}|_{B/G}\otimes K_{C_2/G}^{1/2}|_{B/G}\otimes det(N_{B/C_2}))_{\delta}$ . Both $Kir_{\mathfrak{E}}$ and $res$ are surjective by Theorem \ref{thm:twistedkirwan}.
\end{theorem}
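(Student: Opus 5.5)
The plan is to follow the proof of Theorem \ref{mainthm2} with the twisted coefficient $F_i=K_{C_i/G}^{1/2}$ replacing $\shO_{C_i/G}$. In four steps: establish the two vertical isomorphisms, check that the diagram commutes, and deduce surjectivity of $res$ from Theorem \ref{thm:twistedkirwan}.

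\textbf{Vertical isomorphisms.} The left vertical arrow is Theorem \ref{thm: can}. For the right one, first note that $C_i^{ss}=C_i\cap X^{ss}$ is open in $C_i$. Hence $K_{C_i^{ss}/G}^{1/2}=K_{C_i/G}^{1/2}|_{C_i^{ss}}$ exists, and $B^{ss}=B\cap X^{ss}$ is still a clean intersection whose conormal sequence splits, by restricting the $G$-equivariant splitting. Restricting Proposition \ref{prop: eqcanlag} to $B^{ss}/G$ shows that $\mathfrak{E}|_{B^{ss}}$ is the 2-torsion local system attached to $(K_{C^{ss}_1/G}^{-1/2}\otimes K_{C^{ss}_2/G}^{1/2})|_{B^{ss}/G}\otimes\det(N_{B^{ss}/C_2^{ss}})$. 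Then (\ref{eq: generalexteq}) on $X^{ss}\sslash G$ gives
$$\Ext^{\bullet}_{X^{ss}\sslash G}(K^{1/2}_{C^{ss}_1/G},K^{1/2}_{C^{ss}_2/G})\cong\bigoplus_{p+q+m=\bullet}\Hdot^q\big(B^{ss}/G,\wedge^{p-m}\LL_{B^{ss}/G}\otimes\mathcal{E}|_{B^{ss}/G}\big),$$
where $\mathcal{E}$ is the torsion line bundle inducing $\mathfrak{E}$. To identify this with $\Hdot^{\bullet-m}_G(B^{ss},\mathfrak{E})$, I will use the covering trick with the double cover $\widetilde{B}\to B$ restricted to $\widetilde{B}^{ss}$, together with Hodge-to-de Rham degeneracy for $\widetilde{B}^{ss}/G$ from \cite[Example 3.1.7]{kubrak2022hodge}, exactly as in Proposition \ref{prop: gitext}.

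\textbf{Commutativity.} Every ingredient of the identification is compatible with restriction along the open immersion $u:X^{ss}\sslash G\hookrightarrow X\sslash G$. Derived fiber products commute with open base change, so $C^{ss}_1/G\times_{X^{ss}\sslash G}C^{ss}_2/G$ is the restriction of $\Tot_{B/G}(\LL_{B/G}[-1])$ to $B^{ss}/G$. The isomorphism of Theorem \ref{mainthm} is built from the splitting, which restricts. The adjunctions, base change and Grothendieck duality $i_2^!$ are natural for flat (open) pullback. On the de Rham side, the Hodge-to-de Rham comparison and the covering decomposition (\ref{eq: coveringcoh}) are functorial for open restriction. Together these give a chain of commuting squares whose bottom row is $Kir_{\mathfrak{E}}$, the restriction $\Hdot_G(B,\mathfrak{E})\to\Hdot_G(B^{ss},\mathfrak{E})$.

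\textbf{Surjectivity.} $Kir_{\mathfrak{E}}$ is surjective by Theorem \ref{thm:twistedkirwan} applied to the $G$-equivariant local system $\mathfrak{E}$ on $B$, using $L|_B$ as linearization. Since both vertical maps are isomorphisms, $res$ is surjective as well.

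\textbf{Main obstacle.} I expect the hard step to be commutativity on the de Rham side. Degeneration is proved by dimension counting, which yields only an abstract isomorphism, so one has to check that the Hodge filtration splitting is compatible with restriction. My first approach is to avoid choosing splittings: regard the Hodge-to-de Rham spectral sequence as a natural filtered object and note that restriction is a morphism of spectral sequences, both degenerate. The statement is then compatibility with the associated graded, which suffices for surjectivity of $res$, since a filtered map that is surjective on associated graded is surjective. If a genuine commuting diagram is wanted, I would work $\mathbb{C}^*$-equivariantly, using the weight grading of $\Sym$, which matches the Hodge grading functorially.
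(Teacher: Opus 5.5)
Your route is the paper's. The paper gives no separate argument for Theorem~\ref{thm: kircan}: it asserts that every identification in Theorem~\ref{mainthm} and Corollaries~\ref{symredext}, \ref{symredext2} is natural for restriction to $X^{ss}\sslash G$, then invokes Theorem~\ref{thm:twistedkirwan}. You are right that the Hodge-to-de Rham step is where naturality is not automatic. However, your repair runs in the wrong direction, so there is a genuine gap.

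The Ext side is naturally Hodge cohomology: the splitting used in Theorem~\ref{mainthm} restricts to $B^{ss}$, and the $p$-grading from $\Sym$ is compatible with restriction. Degeneration then identifies Hodge cohomology naturally with $\operatorname{gr}_F$ of $\Hdot_G(-,\mathfrak{E})$. So what you actually obtain is $res=\operatorname{gr}_F(Kir_{\mathfrak{E}})$. Theorem~\ref{thm:twistedkirwan} gives surjectivity of $Kir_{\mathfrak{E}}$, but you need surjectivity of $\operatorname{gr}_F(Kir_{\mathfrak{E}})$. The fact you quote (surjective on $\operatorname{gr}$ implies surjective) is the converse. The implication you need fails for general filtered maps: take the identity of $\CC$ with $F^1=0$ on the source and $F^1=\CC$, $F^2=0$ on the target. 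It is filtered and bijective, yet $\operatorname{gr}^1$ is $0\to\CC$.

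What is missing is \emph{strictness}: $Kir_{\mathfrak{E}}(F^p\Hdot_G(B,\mathfrak{E}))=F^p\Hdot_G(B^{ss},\mathfrak{E})$. Your fallback does not supply it either. The $\CC^*$-weight grading of $\Sym$ splits only the Ext side and gives no canonical splitting of $F$ on equivariant de Rham cohomology.

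The natural source of strictness is Hodge theory. Use the $G$-equivariant double cover $c:\widetilde B\to B$ with $c_*\CC=\CC\oplus\mathfrak{E}$. Then $\Hdot_G(B,\mathfrak{E})$ and $\Hdot_G(B^{ss},\mathfrak{E})$ are the $(-1)$-eigenspaces of the deck involution on $\Hdot_G(\widetilde B,\CC)$ and $\Hdot_G(c^{-1}(B^{ss}),\CC)$. These groups carry Deligne's mixed Hodge structures, functorial for restriction and for the involution. A morphism of mixed Hodge structures is strict for $F$, so a surjective one is surjective on every $\operatorname{gr}^p_F$.

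To use this, you must show that the filtration from the Hodge-to-de Rham spectral sequences of the stacks $\widetilde B/G$ and $c^{-1}(B^{ss})/G$ (as in Kubrak--Prikhodko) is Deligne's Hodge filtration. That comparison is the real content behind both your step and the paper's appeal to naturality. With strictness alone you get surjectivity of $res$, but the square only commutes after passing to $\operatorname{gr}_F$. If these cohomologies are also pure, then $H^{p,q}=F^p\cap\overline{F^q}$ is a functorial splitting, and you get the literally commuting square with vertical isomorphisms as stated.
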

\clearpage
\appendix
\section{Twisted Kirwan Surjectivity}
\subsection{Twisted Kirwan map}
Let $G$ be a linear reductive algebraic group, acting on a smooth projective over affine variety $M$ with a $G$-equivariant ample line bundle $L$. Suppose that there is a $G$-equivariant $\CC$-local system $\mathfrak{L}$ on $M$. Then the natural embedding $M^{ss}/G\hookrightarrow M/G$ induces a map $Kir$ on the twisted cohomology below, called \emph{twisted Kirwan map}:
$$Kir_{\mathfrak{L}}: \Hdot^{\bullet}_G(M, \mathfrak{L})\arw \Hdot^{\bullet}_G(M^{ss}, \mathfrak{L}).$$ 
When $\mathfrak{L}$ is trivial, it is the usual Kirwan map \cite{kirwan1984cohomology} if $G$ acts on $M^{ss}$ freely so that the quotient stack $M^{ss}/G$ is homeomorphic to the symplectic quotient of $M$ by the compact form $G_c$ of $G$. The aim of this appendix is to give a proof of the following twisted Kirwan surjection. Let $M_G=M\times_{BG} EG$ be the homotopy quotient of $M$ by $G$. Then it is known that $\Hdot^{\bullet}_G(M, \mathfrak{L})\cong \Hdot^{\bullet}(M_G, \mathfrak{L})$.
\begin{theorem}[twisted Kirwan surjection]\label{thm:twistedkirwan}
    Let $(M, G, L)$ be above, then the twisted Kirwan map 
    $$Kir: \Hdot^{\bullet}_G(M, \mathfrak{L})\arw \Hdot^{\bullet}_G(M^{ss}, \mathfrak{L})$$ 
    is surjective, where $\mathfrak{L}$ is any $G$-invariant $\CC$-local system on $M$. 
\end{theorem}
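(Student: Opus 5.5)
The plan follows Kirwan's original Morse-theoretic argument, with local coefficients carried along. Choose a $G$-equivariant embedding of $(M,L)$ into a projective space over an affine, with a maximal compact $K\subset G$. Let $\Phi$ be the associated $K$-moment map and $f=\|\Phi\|^2$ with respect to an invariant inner product. Equivalently, on the algebraic side, use the Kempf--Ness / Hesselink stratification
$$M=M^{ss}\sqcup\bigsqcup_{\beta\neq 0} S_\beta.$$
Here each $S_\beta$ is a smooth, locally closed, $G$-invariant subvariety of complex codimension $d_\beta$, and $S_\beta\cong G\times_{P_\beta} Y_\beta^{ss}$. Index the strata by $\|\beta\|$ and build $G$-invariant open sets $U_0=M^{ss}\subset U_1\subset\cdots$, where $U_{k}=U_{k-1}\sqcup S_{\beta_k}$ and $S_{\beta_k}$ is closed in $U_k$. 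Since $M$ is projective over affine, the properness of $f$ on the relevant part (or the finiteness of the stratification on $M$) should ensure that finitely many strata exhaust $M$, or at least enough of them to reduce to that case. Surjectivity then follows by induction once each restriction $\Hdot^\bullet_G(U_k,\mathfrak{L})\to\Hdot^\bullet_G(U_{k-1},\mathfrak{L})$ is shown to be surjective.

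For one step, use the equivariant Thom--Gysin long exact sequence with local coefficients, applied to the closed smooth stratum $S=S_{\beta_k}\subset U_k$ with normal bundle $N$ of real rank $2d$:
$$\cdots\to\Hdot^{\bullet-2d}_G(S,\mathfrak{L}|_S\otimes or_N)\xrightarrow{\;\cup e\;}\Hdot^\bullet_G(U_k,\mathfrak{L})\to\Hdot^\bullet_G(U_{k-1},\mathfrak{L})\to\cdots.$$
This comes from excision and the Thom isomorphism, and the Thom isomorphism holds with any local coefficients. The orientation sheaf $or_N$ is trivial because $N$ is a complex bundle. The restriction map is surjective exactly when the Gysin map is injective. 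Composing the Gysin map with restriction to $S$ gives multiplication by the equivariant Euler class $e_G(N)$ on $\Hdot^\bullet_G(S,\mathfrak{L}|_S)$, so it suffices to show that $e_G(N)$ is not a zero divisor there. This is the Atiyah--Bott criterion, which works verbatim with twisted coefficients, since cup product with a class from $\Hdot^\bullet_G(S,\mathbb{Q})$ makes $\Hdot^\bullet_G(S,\mathfrak{L})$ a module.

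The main obstacle is this non-zero-divisor property. The reduction is $\Hdot^\bullet_G(S,\mathfrak{L})\cong\Hdot^\bullet_{P_\beta}(Y_\beta^{ss},\mathfrak{L})$, and $Y^{ss}_\beta$ retracts equivariantly onto $Z^{ss}_\beta$. Next, pass to $\Hdot^\bullet_{L_\beta}(Z_\beta^{ss},\mathfrak{L})$. The subgroup $T_\beta=\exp(\mathbb{C}\beta)$ is a central torus in $L_\beta$ and acts trivially on $Z_\beta$, but it need not act trivially on the local system: the equivariant structure gives a character of $\pi_0$ of the stabilizer. Since $T_\beta$ is connected, its action on the stalks is trivial. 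Then, at least rationally after passing to a finite cover, the group is $L_\beta\simeq T_\beta\times L'_\beta$, which gives
$$\Hdot^\bullet_{L_\beta}(Z^{ss}_\beta,\mathfrak{L})\cong\Hdot^\bullet(BT_\beta)\otimes\Hdot^\bullet_{L'_\beta}(Z^{ss}_\beta,\mathfrak{L}).$$
The weights of $\beta$ on $N|_{Z_\beta}$ are all strictly positive, so the $T_\beta$-component of $e_G(N)$ is a nonzero power $c\,u^{d}$ of the generator $u$ of $\Hdot^2(BT_\beta)$. Its leading term in $u$ is therefore a unit times $u^d$, and multiplication by $e_G(N)$ is injective on the free $\Hdot^\bullet(BT_\beta)$-module.

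I expect the delicate points to be the following. First, the splitting of $L_\beta$, which holds only up to finite covers; this is harmless over $\mathbb{C}$ once one takes invariants under the finite group. Second, checking that $\mathfrak{L}$ is pulled back compatibly along all these homotopy equivalences. Third, the finiteness or exhaustion of the stratification in the projective-over-affine, noncompact setting, where I would invoke the properness of $\|\Phi\|^2$ as in Kirwan's argument.
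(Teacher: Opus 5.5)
Your proposal is correct and follows essentially the same route as the paper. Both arguments use the HKKN stratification, twisted excision and Thom isomorphisms to reduce surjectivity to injectivity of $\cup e_G(N_\beta)$ on $\Hdot^\bullet_G(S_\beta,\mathfrak{L})\cong\Hdot^\bullet_{L_\beta}(Z_\beta,\mathfrak{L})$, and then the Atiyah--Bott criterion for the central one-parameter subgroup $\lambda_\beta$, which acts trivially on $Z_\beta$ and with nonzero weights on $N_\beta$.

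The differences are minor. You split off $T_\beta$ by a finite isogeny $T_\beta\times L'_\beta\to L_\beta$, where the paper reduces to a maximal torus via the $K/T$ fibration. You also make explicit why the connected torus $T_\beta$ acts trivially on the stalks of $\mathfrak{L}$, which the paper leaves implicit. For exhaustion, rely on the finiteness of the algebraic HKKN stratification, as the paper does, rather than on properness of $\|\Phi\|^2$. That properness can fail in the projective-over-affine setting: for $\CC^*$ acting on $\CC^2$ with weights $(1,-1)$, $\|\Phi\|^2$ vanishes along a noncompact hyperboloid.
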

We follow the main arguments in \cite{kirwan1984cohomology}. 
\subsection{HKKN stratification of unstable locus} \label{app:hkkn}The first ingredient is the HKKN stratification of the unstable locus $M^{us}:=M-M^{ss}$, which will be reviewed briefly. The reader can refer to \cite{kirwan1984cohomology} for details. 
Let $\lambda: \GG_m\arw G$ be a 1-parameter subgroup (1-PS), and denote $Z\subset M^{\lambda}$ a connected component of the fixed locus $M^{\lambda}$. Then the \emph{blade} of $Z$ is defined to be the following subvariety of $M$:
$$Y_{Z,\lambda}=\{x\in M| \lim_{t\to 0}\lambda(t).x\in Z\}.$$
Then define $S_{\lambda,Z}:=G\cdot Y_{\lambda, Z}$. 

Let $L_{\lambda}$ be the centralizer of $\lambda$ in $G$, and define $U_{\lambda}=\{g\in G| \lim_{t\to 0}\lambda(t)g\lambda(t)^{-1}=1_G\}$ and $P_{\lambda}=\{g\in G| \lim_{t\to 0}\lambda(t)g\lambda(t)^{-1}\text{ exists}\}$. It is shown in \cite{mumford1994geometric} that $P_{\lambda}$ is parabolic, $U_{\lambda}$ (resp. $L_{\lambda}$) is the unipotent radical (resp. Levi component) of $P_{\lambda}$. It is not hard to see that  $Z_{\lambda}$ is $L_{\lambda}$-invariant, and $Y_{\lambda}$ is $P_{\lambda}$-invariant. 
\begin{theorem}[HKKN stratification \cite{kirwan1984cohomology},\cite{halpern2015derived}]\label{thm: hkkn} Let $(M, G, L)$ be above. There is a finite stratification  
$$M^{us}=\bigsqcup_{\alpha} S_{\alpha},$$ indexed by 1-PS's $\lambda_{\alpha}$ and connected components of their fixed locus $M^{\lambda_{\alpha}}$, satisfying the following
\begin{enumerate}
    \item The canonical projection $\pi_{\alpha}: Y_{\alpha}\arw Z_{\alpha}$ given by $x\mapsto \lim_{t\to 0}\lambda(t).x$ is an affine space bundle;
    \item The canonical map $G\times_{P_{\alpha}}Y_{\alpha}\arw G\cdot Y_{\alpha}:=S_{\alpha}$ is an isomorphism.
\end{enumerate}
\end{theorem}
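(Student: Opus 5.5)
The statement to prove is the HKKN stratification, Theorem \ref{thm: hkkn}, in the setting of the appendix: $M$ smooth and projective over affine, $G$ reductive, $L$ a $G$-equivariant ample line bundle. I would follow Kirwan's and Hesselink's construction, translated into Halpern-Leistner's language of $\Theta$-stratifications.

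\textbf{Step 1: the numerical invariant and the optimal destabilizers.} Fix a $W$-invariant rational norm $\|\cdot\|$ on the cocharacter lattice of a maximal torus. For a point $x\in M$ and a 1-PS $\lambda$ such that $\lim_{t\to0}\lambda(t)\cdot x$ exists, let $\mu^L(x,\lambda)$ be the weight of $\lambda$ acting on the fiber of $L$ at this limit point.

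By the Hilbert--Mumford criterion, which applies here because $M$ is projective over the affine quotient, $x$ is unstable exactly when $\mu^L(x,\lambda)<0$ for some $\lambda$. The criterion is proved by reducing to the projective case via the affinization $M\to \Spec\Hdot^0(M,\shO)$.

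For unstable $x$, define
$$M(x)=\sup_\lambda \frac{-\mu^L(x,\lambda)}{\|\lambda\|}.$$
Kempf's theorem says this supremum is attained. The optimizing 1-PS is unique up to scaling and conjugation by $P_\lambda$.

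\textbf{Step 2: construct the strata.} The values $-\mu^L/\|\lambda\|$ that occur on $M$ form a finite set. Indeed, after conjugating into the maximal torus $T$, the relevant weights come from finitely many $T$-fixed components and the finitely many $T$-weights of $L$ on them. Hence only finitely many pairs $(\lambda_\alpha,Z_\alpha)$ occur, up to conjugacy.

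For each such pair, let $Z_\alpha$ be the open part of a component of $M^{\lambda_\alpha}$ that is semistable for the $L_{\lambda_\alpha}$-action with the linearization twisted by the character dual to $\lambda_\alpha$. Set $Y_\alpha=\pi^{-1}(Z_\alpha)$ inside the blade and $S_\alpha=G\cdot Y_\alpha$. Kempf uniqueness then shows that every unstable point lies in exactly one $S_\alpha$. Ordering the strata by the value of $M(x)$ gives a stratification, since the closure of $S_\alpha$ is contained in the union of $S_\beta$ with $M_\beta\ge M_\alpha$.

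\textbf{Step 3: property (1).} The limit map $Y_{\lambda,Z}\to Z$ is the Białynicki-Birula decomposition for the $\GG_m$-action induced by $\lambda$. On a smooth variety on which limits exist, this map is an affine space bundle, with fibers modelled on the positive-weight part of $T_M|_Z$.

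To apply it I need the limits to exist. Projectivity over affine gives a $\GG_m$-equivariant closed embedding of $M$ into $\PP^N\times\mathbb{A}^k$. For points of $Y$ the limit exists by definition, and the affine-bundle structure is local near $Z$. Restricting to the open subset $Z_\alpha$ preserves this structure.

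\textbf{Step 4: property (2).} This is the main obstacle: showing that $G\times_{P_\alpha}Y_\alpha\to S_\alpha$ is an isomorphism and not merely a bijection.

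\emph{Bijectivity.} Suppose $g\cdot y\in Y_\alpha$ with $y\in Y_\alpha$. Both $\lambda_\alpha$ and $g^{-1}\lambda_\alpha g$ are then optimal destabilizers of $y$. Kempf uniqueness forces them to be $P_\alpha$-conjugate, and hence $g\in P_\alpha$.

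\emph{Isomorphism.} The map is a bijection, and by Zariski's main theorem it is an isomorphism onto a locally closed subset once it is étale. This I would check at points of $Z_\alpha$ by comparing tangent spaces. On tangent spaces, the map induced by $\mathfrak g/\mathfrak p_\alpha\to T_M/T_{Y_\alpha}$ is an isomorphism onto its image: $\mathfrak g/\mathfrak p_\alpha$ is the negative-weight part of $\mathfrak g$, and its infinitesimal action is injective modulo $T_{Y_\alpha}$ by the semistability of $Z_\alpha$ for the Levi action. The statement then propagates from $Z_\alpha$ to all of $Y_\alpha$ by $\lambda$-equivariance together with openness of the étale locus.

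Alternatively, I would simply cite Theorem 2.1 in \cite{halpern2015derived} (the $\Theta$-stratification), which establishes exactly this.
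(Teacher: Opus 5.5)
The paper gives no argument of its own here. It quotes the theorem from \cite{kirwan1984cohomology} and \cite{halpern2015derived}, so your closing fallback of citing Halpern-Leistner is exactly the paper's route. Your self-contained sketch, however, has a genuine gap at the step you yourself flag as the main obstacle. Injectivity of the differential makes $\phi\colon G\times_{P_\alpha}Y_\alpha\to M$ \emph{unramified}, not \'etale. In general $\phi$ is not even dominant, and ``\'etale onto $S_\alpha$'' presupposes a scheme structure on $S_\alpha$ that you have not constructed. Zariski's main theorem does not upgrade a bijective unramified morphism to an isomorphism onto a locally closed subvariety. For example, the normalization of the nodal cubic $y^2=x^2(x+1)$, restricted to $\mathbb{A}^1$ minus one of the two preimages of the node, is bijective and unramified onto a singular curve. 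What is missing is properness:
\begin{enumerate}
\item Since $G/P_\alpha$ is projective, $G\times_{P_\alpha}\overline{Y_\alpha}\to M$ is proper.
\item Let $V$ be the complement of the union of the strata strictly more unstable than $S_\alpha$. It is open and $G$-invariant by the closure relation in your Step 2.
\item You then need the closure relation $\overline{Y_\alpha}\cap V=Y_\alpha$. On $\overline{Y_\alpha}$ the $\lambda_\alpha$-limits still exist and $\mu^L(-,\lambda_\alpha)$ can only decrease, so $M(y)\ge M_\alpha$ there. Equality should force $\lambda_\alpha$ to be optimal for $y$ and its limit to lie in $Z_\alpha$.
\item Given this, $\phi\colon G\times_{P_\alpha}Y_\alpha\to V$ is a base change of a proper map, hence proper.
\item A proper morphism of complex varieties that is injective on points and unramified is a closed immersion.
\end{enumerate}
This gives at once that $S_\alpha$ is locally closed and smooth and that $\phi$ is an isomorphism onto it.

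Several further remarks. First, let $\mathfrak{u}^-_\alpha$ be the negative-weight part of $\mathfrak{g}$, the Lie algebra of the opposite unipotent radical $U^-_\alpha$. The injectivity of $\mathfrak{u}^-_\alpha\to T_zM$, $\xi\mapsto\xi^{\#}(z)$, for $z\in Z_\alpha$ comes from Kempf's theorem, not from semistability directly. If $u\in U^-_\alpha$ fixes $z$, then $u\lambda_\alpha u^{-1}$ is again optimal for $z$. Hence $u$ normalizes $P_\alpha$, so $u\in P_\alpha\cap U^-_\alpha=\{1\}$, and in characteristic $0$ this gives $\mathfrak{g}_z\cap\mathfrak{u}^-_\alpha=0$. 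Semistability of $z$ for the twisted $L_{\lambda_\alpha}$-action only serves to make $\lambda_\alpha$ optimal for $z$.

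Second, and more important, several of your steps rest on an unstated lemma: your claim that every unstable point lies in exactly one $S_\alpha$, your bijectivity argument (which needs $\lambda_\alpha$ to be optimal for $g\cdot y\in Y_\alpha$), and the closure relation above. The lemma is this: for $y$ whose $\lambda_\alpha$-limit lies in the component of $M^{\lambda_\alpha}$ containing $Z_\alpha$, $\lambda_\alpha$ is an optimal destabilizer of $y$ if and only if $\lim_{t\to0}\lambda_\alpha(t)y$ is semistable for $L_{\lambda_\alpha}$ with the twisted linearization. This is the heart of the Kirwan--Hesselink construction and must be proved or cited.

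Finally, you are right to take $Z_\alpha$ to be this twisted-semistable open part rather than a whole component of $M^{\lambda_\alpha}$, as the paper's wording has it. For $\GG_m^2$ acting on $\PP^2$ with weights $(1,0),(1,1),(1,-1)$, the 1-PS $(1,0)$ acts trivially. So the blade of the component $\PP^2$ is all of $\PP^2$ and contains the point strata $[0:1:0]$ and $[0:0:1]$.
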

\subsection{Long exact sequences of equivariant cohomology in local coefficient}\label{app:longexact}
The second ingredient is the long exact sequences of equivariant cohomology in local coefficients for the HKKN stratification. The reader can refer to 
Chapter VI of \cite{whitehead2012elements} for details on cohomology in local coefficients. Let $U_{\beta}=M-\bigsqcup_{\alpha\leq \beta} S_{\alpha}$ and $U_{\beta}'=M-\bigsqcup_{\alpha< \beta} S_{\alpha}$. Then for the pair $(U_{\beta}, U'_{\beta})$, there is a long exact sequence:
$$\cdots\arw\Hdot^{\bullet}_G(U_{\beta}, U_{\beta}';\mathfrak{L})\xrightarrow{f_{\beta}}\Hdot^{\bullet}_G(U_{\beta},\mathfrak{L} )\xrightarrow{g_{\beta}} \Hdot^{\bullet}_G(U_{\beta}',\mathfrak{L})\arw \Hdot^{\bullet+1}_G(U_{\beta}, U_{\beta}';\mathfrak{L})\arw\cdots.$$

Note that 
$$\Hdot^{\bullet}_G(U_{\beta}, U_{\beta}';\mathfrak{L})\cong \Hdot^{\bullet}_G(N_{\beta}, N_{\beta}^0;\mathfrak{L})\cong \Hdot^{\bullet-2r_{\beta}}_{G}(S_{\beta}, \mathfrak{L}),$$ where $N_{\beta}$ is the normal bundle of $S_{\beta}\subset M$, and $r_{\beta}=rank (N_{\beta})$. The first $\cong$ comes from the twisted excision in \cite{whitehead2012elements}. The second is the twisted Thom isomorphism, which holds by a Mayer-Vietoris argument as in the usual untwisted Thom isomorphism. 

It is noted that the twisted Kirwan map $Kir_{\mathfrak{L}}$ is the composition $\prod_{\beta} g_{\beta}$. In order to show $Kir$ is surjective, it is sufficient to show that each $g_{\beta}$ is surjective. By means of each long exact sequence, it is equivalent to show that $f_{\beta}$ is injective, or equivalently, the following
$$\Hdot^{\bullet-2r_{\beta}}_G(S_{\beta}, \mathfrak{L})\arw \Hdot^{\bullet}_G(U_{\beta},\mathfrak{L})$$is injective. 

We consider the following composition
$$ \Hdot^{\bullet-2r_{\beta}}_G(S_{\beta}, \mathfrak{L})\arw \Hdot^{\bullet}_G(U_{\beta},\mathfrak{L})\arw \Hdot^{\bullet}_G(S_{\beta}, \mathfrak{L}),$$
where the second map is induced by the inclusion $S_{\beta}\hookrightarrow U_{\beta}$. It is known that the composition is just cupping product with the equivariant Euler class of $N_{\beta}$. Now it is suffices to show that 
$$\Hdot^{\bullet-2r_{\beta}}_G(S_{\beta},\mathfrak{L})\xrightarrow{\cup e_G(N_{\beta})} \Hdot^{\bullet}_G(S_{\beta},\mathfrak{L})$$
is injective.

\subsection{Atiyah-Bott criterion}
The last ingredient lies in the injectivity criterion in Section 13 of \cite{atiyah1983yang}. 
Let $Y$ be a  connected K\"ahler manifold with a compact Lie group $K$ action, $N$ be a $G$-equivariant complex vector bundle on it, and $\mathfrak{L}$ be a $G$-invariant $\CC$-local system on $Y$. The following is the criterion on injectivity of the cup product with $e_K(N)$ due to Atiyah-Bott \cite{atiyah1983yang}:

\begin{proposition}\label{prop: atiyah-bott}
Suppose that there is a subtorus $T_0\leq K$ which acts trivially on $Y$ and that the representation of $T_0$ on the fibre of $N$ at any point of $Y$ has no nonzero fixed vectors. Then 
$$\Hdot^{\bullet-2\,rank(N)}_K(Y,\mathfrak{L})\xrightarrow{\cup e_K(N)} \Hdot^{\bullet}_K(Y,\mathfrak{L})$$
is injective. 
\end{proposition}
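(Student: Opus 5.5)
The plan is to follow the classical Atiyah--Bott argument, carrying the local system $\mathfrak{L}$ along as a coefficient. The first step is to reduce to the case where $Y$ is a single fixed-point component with trivial $T_0$-action. Since $T_0$ acts trivially on $Y$, we have $Y_K=EK\times_K Y$. Choose a subgroup $K'\le K$ of finite index, or simply the centralizer $Z_K(T_0)\supseteq T_0$, such that $T_0$ is central in $K'$. By the splitting principle for equivariant cohomology, restriction $\Hdot^\bullet_K(Y,\mathfrak{L})\to \Hdot^\bullet_{K'}(Y,\mathfrak{L})$ is injective, or identifies the source with Weyl-type invariants. The Euler class also restricts to $e_{K'}(N)$. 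It therefore suffices to treat the case where $T_0$ is central in $K$.

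Next, set $K_1=K/T_0$, at least up to finite covers, which is harmless with $\CC$-coefficients. Since $T_0$ is central and acts trivially on $Y$, the Künneth-type isomorphism
\[
\Hdot^\bullet_K(Y,\mathfrak{L})\cong \Hdot^\bullet_{K_1}(Y,\mathfrak{L})\otimes \Hdot^\bullet(BT_0,\CC)
\]
holds with complex coefficients. Here $\mathfrak{L}$ is pulled back from $Y_{K_1}$: a $K$-equivariant local system on which the connected group $T_0$ acts trivially on $Y$ descends. The key point is that $T_0$ connected makes the fibration $BT_0\to Y_K\to Y_{K_1}$ have simply connected fibre, so its Leray spectral sequence with coefficients $\mathfrak{L}$ degenerates, because $\Hdot^\bullet(BT_0)$ is concentrated in even degrees and is generated by classes that extend.

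Now decompose $N=\bigoplus_\chi N_\chi$ by the $T_0$-weights $\chi$. By hypothesis none of the weights is $0$. The equivariant Euler class is $e_K(N)=\prod_\chi e_K(N_\chi)$. Under the Künneth decomposition, the component of $e_K(N_\chi)$ of top degree in the $\Hdot^\bullet(BT_0)$-direction is $\chi^{\mathrm{rank} N_\chi}\in \Sym(\mathfrak{t}_0^\vee)$. Hence $e_K(N)$ has leading term $\prod_\chi\chi^{\mathrm{rank}N_\chi}\otimes 1$, a nonzero polynomial, when we filter by polynomial degree on $\Hdot^\bullet(BT_0)$.

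Multiplication by such an element is injective on $A\otimes \CC[\mathfrak t_0]$ for any graded vector space $A$, here $A=\Hdot^\bullet_{K_1}(Y,\mathfrak{L})$. Indeed, order monomials and look at leading terms: a nonzero polynomial ring element times a nonzero element of $A\otimes\CC[\mathfrak t_0]$ has nonzero leading term because $\CC[\mathfrak t_0]$ is a domain. Therefore $\cup e_K(N)$ is injective. I expect the main obstacle to be justifying the Künneth splitting with twisted coefficients and making the leading-term argument rigorous when $A$ is not finite dimensional. I would handle the latter with a degree filtration, using the fact that each graded piece is finite dimensional because $Y$ is a finite-type Kähler manifold with finitely many components.
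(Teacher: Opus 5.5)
Your proposal is correct and follows essentially the same route as the paper. Both arguments do the following: restrict to a subgroup containing $T_0$ (this restriction is injective by a Leray--Hirsch argument, since the fibre has only even-degree cohomology), split off $\Hdot^{\bullet}(BT_0)$ by K\"unneth, and observe that $e_K(N)$ is a nonzero polynomial in $\Hdot^{\bullet}(BT_0)$ plus terms of lower polynomial degree, so cupping with it is injective.

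The one difference is that the paper restricts all the way to a maximal torus $T=T_0\times T_1$. This gives $Y_T=BT_0\times Y_{T_1}$ on the nose and avoids your finite-cover bookkeeping for $K\to K/T_0$. Your worry about infinite-dimensional $A$ is also unnecessary: an element of fixed cohomological degree has only finitely many polynomial-degree components, so the leading-term argument needs no finiteness assumption.
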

\begin{proof}
Let $T_0\subset T\subset K$ be the maximal torus containing $T_0$. Then there is a fibration
\[
\begin{tikzcd}
K/T \arrow[r, hook] & Y_T \arrow[d, two heads]\\ & Y_K
\end{tikzcd}
\]
and the associated Leray spectral sequence degenerates at the $E_2$ page since $K/T$ has no odd cohomology:
$\Hdot^{\bullet}(Y_T,\mathfrak{L}
)\cong \Hdot^{\bullet}(Y_K,\mathfrak{L}
)\otimes \Hdot^{\bullet}(K/T),$
which implies that
$$ \Hdot^{\bullet}(Y_K,\mathfrak{L}
)\subset \Hdot^{\bullet}(Y_T,\mathfrak{L}
)$$
is a direct summand. Therefore, we may assume that $K=T=T_0\times T_1$, where $T_0$ acts on $Y$ trivially. Then 
$$\Hdot^{\bullet}(Y_T, \mathfrak{L})\cong \Hdot^{\bullet}(BT_0)\otimes \Hdot^{\bullet}(Y_{T_1}, \mathfrak{L}),$$
$$\Hdot^{\bullet}(Y_T, \CC)\cong \Hdot^{\bullet}(BT_0)\otimes \Hdot^{\bullet}(Y_{T_1}, \CC).$$
Since $\Hdot^{\bullet}(BT_0)$ is a polynomial ring, and any $0\neq \alpha_0\otimes1\in \Hdot^{\bullet}(BT_0)\otimes \Hdot^{\bullet}(Y_{T_1}, \CC)$ defines an injective map on $\Hdot^{\bullet}(BT_0)\otimes \Hdot^{\bullet}(Y_{T_1}, \mathfrak{L})$. 
Now observe that $\alpha_0\otimes 1+\text{higher degree terms}$ w.r.t. $Y_{T_1}$ in $\Hdot^{\bullet}(BT_0)\otimes \Hdot^{\bullet}(Y_{T_1}, \CC)$ also defines an injective map. Since $T_0$ acts on Y trivially, and the representation of $N$ is non-trivial, $e_K(N)\in \Hdot^{\bullet}(BT_0)\otimes \Hdot^{\bullet}(Y_{T_1}, \CC)$ is such a form, and the criterion concludes. 

\subsection{Proof of twisted Kirwan surjection} By the argument in \ref{app:longexact}, we reduce to show that $$\Hdot^{\bullet-2r_{\beta}}_G(S_{\beta},\mathfrak{L})\xrightarrow{\cup e_G(N_{\beta})} \Hdot^{\bullet}_G(S_{\beta},\mathfrak{L})$$
is injective. Note that
$$\Hdot^{\bullet}_{G}(S_{\beta}, \mathfrak{L})\cong \Hdot^{\bullet}_{P_{\alpha}}(Y_{\beta}, \mathfrak{L})\cong \Hdot^{\bullet}_{L_{\beta}}(Y_{\beta}, \mathfrak{L})\cong\Hdot^{\bullet}_{L_{\beta}}(Z_{\beta}, \mathfrak{L}).$$
The first $\cong$ comes from Theorem \ref{thm: hkkn}(2), and the second one comes from the $LU$ decomposition of $P_{\beta}$ and the last one comes from Theorem \ref{thm: hkkn}(1). Note also that $L_{\beta}$ is reductive, and contains the 1-PS $\lambda_{\beta}$ which acts on $Z_{\beta}$ trivially. By Hilbert-Mumford's criterion, the $\lambda_{\beta}$-weight of $N_{\beta}|_{Z_{\beta}}$ has to be negative. Now to conclude our statement, it is sufficient to apply Proposition \ref{prop: atiyah-bott} then. 
\end{proof}

\bibliography{references202506}

@article{caldararu2003d,
  title={{D}-branes, {B} fields, and {E}xt groups},
  author={C{\u{a}}ld{\u{a}}raru, Andrei and Katz, Sheldon and Sharpe, Eric},
  journal={Adv. Theor. Math. Phys},
  volume={7},
  number={3},
  pages={381-404},
  year={2003}
}

@article{arinkin2012self,
  title={When is the self-intersection of a subvariety a fibration?},
  author={Arinkin, Dima and C{\u{a}}ld{\u{a}}raru, Andrei},
  journal={Advances in Mathematics},
  volume={231},
  number={2},
  pages={815--842},
  year={2012},
  publisher={Elsevier}
}

@article{khan2021virtual,
  title={Virtual excess intersection theory},
  author={Khan, Adeel A},
  journal={Ann. K-Theory},
  volume={6},
  number={3},
  pages={559--570},
  year={2021}
}

@inproceedings{calaque2019shifted,
  title={Shifted cotangent stacks are shifted symplectic},
  author={Calaque, Damien},
  booktitle={Annales de la Facult{\'e} des sciences de Toulouse: Math{\'e}matiques},
  volume={28},
  number={1},
  pages={67--90},
  year={2019}
}

@article{anel2022shifted,
  title={Shifted symplectic reduction of derived critical loci},
  author={Anel, Mathieu and Calaque, Damien},
  journal={Advances in Theoretical and Mathematical Physics},
  volume={26},
  number={6},
  pages={1543--1583},
  year={2022}
}

@article{vasconcelos1967ideals,
  title={Ideals generated by {R}-sequences},
  author={Vasconcelos, Wolmer V},
  journal={Journal of Algebra},
  volume={6},
  number={3},
  pages={309--316},
  year={1967},
  publisher={Elsevier}
}

@article{kubrak2022hodge,
  title={Hodge-to-de {R}ham degeneration for stacks},
  author={Kubrak, Dmitry and Prikhodko, Artem},
  journal={International Mathematics Research Notices},
  volume={2022},
  number={17},
  pages={12852--12939},
  year={2022},
  publisher={Oxford University Press}
}

@book{kirwan1984cohomology,
  title={Cohomology of quotients in symplectic and algebraic geometry},
  author={Kirwan, Frances Clare},
  volume={31},
  year={1984},
  publisher={Princeton university press}
}

@book{mumford1994geometric,
  title={Geometric invariant theory},
  author={Mumford, David and Fogarty, John and Kirwan, Frances},
  volume={34},
  year={1994},
  publisher={Springer Science \& Business Media}
}

@article{pantev2013shifted,
  title={Shifted symplectic structures},
  author={Pantev, Tony and To{\"e}n, Bertrand and Vaqui{\'e}, Michel and Vezzosi, Gabriele},
  journal={Publications math{\'e}matiques de l'IH{\'E}S},
  volume={117},
  pages={271--328},
  year={2013}
}

@article{swan1996hochschild,
  title={Hochschild cohomology of quasiprojective schemes},
  author={Swan, Richard G},
  journal={Journal of Pure and Applied Algebra},
  volume={110},
  number={1},
  pages={57--80},
  year={1996},
  publisher={Elsevier}
}

@article{arinkin2019formality,
  title={Formality of derived intersections and the orbifold {HKR} isomorphism},
  author={Arinkin, Dima and C{\u{a}}ld{\u{a}}raru, Andrei and Hablicsek, M{\'a}rton},
  journal={Journal of Algebra},
  volume={540},
  pages={100--120},
  year={2019},
  publisher={Elsevier}
}

@article{halpern2015derived,
  title={The derived category of a {GIT} quotient},
  author={Halpern-Leistner, Daniel},
  journal={Journal of the American Mathematical Society},
  volume={28},
  number={3},
  pages={871--912},
  year={2015}
}

@article{kapustin2009three,
  title={Three-dimensional topological field theory and symplectic algebraic geometry {I}},
  author={Kapustin, Anton and Rozansky, Lev and Saulina, Natalia},
  journal={Nuclear Physics B},
  volume={816},
  number={3},
  pages={295--355},
  year={2009},
  publisher={Elsevier}
}

@article{toen2014derived,
  title={Derived algebraic geometry},
  author={To{\"e}n, Bertrand},
  journal={EMS Surveys in Mathematical Sciences},
  volume={1},
  number={2},
  pages={153--240},
  year={2014}
}

@phdthesis{lurie2004derived,
  title={Derived algebraic geometry},
  author={Lurie, Jacob},
  year={2004},
  school={Massachusetts Institute of Technology}
}

@article{ben2012loop,
  title={Loop spaces and connections},
  author={Ben-Zvi, David and Nadler, David},
  journal={Journal of Topology},
  volume={5},
  number={2},
  pages={377--430},
  year={2012},
  publisher={Oxford University Press}
}

@article{gunningham2023deformation,
  title={Deformation quantization and perverse sheaves},
  author={Gunningham, Sam and Safronov, Pavel},
  journal={arXiv preprint arXiv:2312.07595},
  year={2023}
}

@article{mladenov2024formality,
  title={Formality of differential graded algebras and complex {L}agrangian submanifolds},
  author={Mladenov, Borislav},
  journal={Selecta Mathematica},
  volume={30},
  number={1},
  pages={8},
  year={2024},
  publisher={Springer}
}

@book{whitehead2012elements,
  title={Elements of homotopy theory},
  author={Whitehead, George W},
  volume={61},
  year={2012},
  publisher={Springer Science \& Business Media}
}

@article{atiyah1983yang,
  title={The {Y}ang-{M}ills equations over {R}iemann surfaces},
  author={Atiyah, Michael Francis and Bott, Raoul},
  journal={Philosophical Transactions of the Royal Society of London. Series A, Mathematical and Physical Sciences},
  volume={308},
  number={1505},
  pages={523--615},
  year={1983},
  publisher={The Royal Society London}
}

@book{barth2003compact,
  title={Compact complex surfaces},
  author={Barth, Wolf and Hulek, Klaus and Peters, Chris and Van de Ven, Antonius},
  volume={4},
  year={2003},
  publisher={Springer Science \& Business Media}
}

@article{marsden1974reduction,
  title={Reduction of symplectic manifolds with symmetry},
  author={Marsden, Jerrold and Weinstein, Alan},
  journal={Reports on mathematical physics},
  volume={5},
  number={1},
  pages={121--130},
  year={1974},
  publisher={Elsevier}
}

@article{freed1999anomalies,
  title={Anomalies in string theory with {D}-branes},
  author={Freed, Daniel S and Witten, Edward},
  journal={Asian Journal of Mathematics},
  volume={3},
  number={4},
  pages={819--852},
  year={1999},
  publisher={International Press of Boston}
}

@incollection{park2025introduction,
  title={An Introduction to Shifted Symplectic Structures},
  author={Park, Hyeonjun and You, Jemin},
  booktitle={Moduli Spaces, Virtual Invariants and Shifted Symplectic Structures},
  pages={37--64},
  year={2025},
  publisher={Springer}
}
\bibliographystyle{plain}

\end{document}